\newtheorem{thm}{Theorem}[section]
\newtheorem{lem}[thm]{Lemma}
\theoremstyle{definition}
\theoremstyle{remark}
\newtheorem{rem}[thm]{Remark}
\numberwithin{equation}{section}
\begin{document}
	\title[]
	{On local rigidity theorems with respect to the scalar curvature
	}
	
	\author{Liang Cheng}

	%\dedicatory{}
	%\date{today}
	
	\subjclass[2020]{Primary 53C24; Secondary 	53E20 .}

	\keywords{Ricci flows;  rigidity theorems; isoperimetric constant;log-sobolev constant;
		non-negative scalar curvature;
		local Perelman's entropy}
	
	\thanks{Liang Cheng's  Research partially supported by
		Natural Science Foundation of China 12171180
	}
	
	\address{School of Mathematics and Statistics, and Key Laboratory of Nonlinear Analysis $\&$ Applications (Ministry of Education), Central  China Normal University, Wuhan, 430079, P.R.China}
	
	\email{chengliang@ccnu.edu.cn }

	\begin{abstract}
		By using the Ricci flow, we study local rigidity theorems  regarding scalar curvature, isoperimetric constant and best constant of $L^2$ logarithmic Sobolev inequality. 
	Precisely,	we prove that	if a metric $g$ on an open set $V$ in an $n$-dimensional Riemannian manifold   
	 satisfies
	$$
	\int_V	R(g) dvol_g \ge  0 \text{\ \ and\ \ } I(V)\ge I(\mathbb{R}^n),
	$$
	or   
	$$
	\int_V	R(g) dvol_g \ge 0 \text{\ \ and\ \ } S(V)\ge S(\mathbb{R}^n),
	$$
	then $g=g_{\mathbb{R}^n}$ on $V$,
		where  $R(g)$ is the scalar curvature of $g$, $\mathbb{R}^n$  is Euclidean space, $ I(V)$ is the isoperimetric constant of $V$ and  $S(V)$ is best constant of $L^2$ logarithmic Sobolev inequality of $V$.
		  Moreover,
		we also obtain the local $\mathbb{R}^n$-rigidity about local Perelman's $\nu$-entropy, and  
		local $\mathbb{S}^n$-rigidity (resp. $\mathbb{H}^n$-rigidity)  theorems regarding the cases concerning  $R(g)\ge n(n-1)$ (resp. $R(g)\ge -n(n-1) $),  weighted isoperimetric constant and  best constant of  weighted $L^2$ logarithmic Sobolev inequality for the weighted metric $\left(\cos{\frac{d_g(p,x)}{2}}\right)^{-4}g$ (resp. $\left(\cosh{\frac{d_g(p,x)}{2}}\right)^{-4}g$).
	\end{abstract}
	\maketitle	
	
	\section{Introduction}
	
The	rigidity properties concerning lower scalar curvature bounds are one of important topics in the study of differential geometry. Indeed, M.Gromov raised the following problem: \textit{ Find verifiable criteria for extremality and rigidity, decide which manifolds admit extremal/rigid metrics and describe particular extremal/rigid manifolds; see Problem C in \cite{Gdozen}.}
\textit{	What are the further examples of extremal/rigid with positive scalar curvature; see P27 in \cite{Gromovlecture}.} 
In this paper, we discuss the local rigidity theorems  regarding scalar curvature, isoperimetric constant and best constant of $L^2$ logarithmic Sobolev inequality.

	The  Bishop-Gromov's volume comparison theorem implies that the following local rigidity theorem: if the geodesic ball $B(p,r_0)$ in an $n-$dimensional manifold satisfying $$Rc\ge 0$$ on $B(p,r_0)$  and
	\begin{equation}\label{volume}
		\frac{\operatorname{Vol}(B(p,r_0))}{r_0^n}\ge \omega_n
	\end{equation}
	for some $r_0$, where $\omega_n$ is volume of the unit Euclidean $n$-ball, then
	$B(p,r_0)$ is flat. 
	Our first theorem demonstrates that under the condition that the integral of scalar curvature is non-negative, if we enhance the requirement in  (\ref{volume}) to ensure that the isoperimetric constant is no less than that of Euclidean space, the local rigidity result still holds.

	\begin{thm}\label{rigidity}
		Let $M$ be an $n$-dimensional manifold. Suppose that for some open  subset $V\subset M$  satisfying
		\begin{equation}\label{positive_scalar}
		\int_V R dvol \ge 0
		\end{equation}	
		and 
		\begin{equation}\label{rigidity_2}
			I(V)\doteq \inf\limits_{\Omega\subset V}\frac{\mathrm{Area}(\partial \Omega)}{\mathrm{Vol}(\Omega)^{\frac{n-1}{n}}}	\ge I(\mathbb{R}^n),
		\end{equation}
		where $R$ is the scalar curvature and $I(\mathbb{R}^n)$ is the isoperimetric constant of $n$-dimensional Euclidean space.
		Then $V$ must be  flat.
	\end{thm}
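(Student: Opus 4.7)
The plan is to pass from the isoperimetric hypothesis to a sharp logarithmic Sobolev inequality on $V$, then use this together with the scalar curvature integral condition to bound Perelman's local entropy from below by zero, and finally conclude flatness from the equality case via Ricci flow.

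First I would observe that the sharp Euclidean isoperimetric inequality is equivalent, via the Federer--Fleming coarea formula and Gaussian rearrangement, to the sharp Euclidean $L^2$ logarithmic Sobolev inequality with the same sharp constant. Consequently, the hypothesis $I(V)\ge I(\mathbb{R}^n)$ yields, for every $u\in C_c^\infty(V)$ with $\int_V u^2\,dvol=1$ and every $\tau>0$,
$$\int_V u^2\log u^2\,dvol \;\le\; 2\tau\int_V|\nabla u|^2\,dvol-\frac{n}{2}\log(4\pi\tau)-n.$$
Setting $u^2=(4\pi\tau)^{-n/2}e^{-f}$ and substituting into Perelman's $\mathcal{W}$--functional, written in the Schr\"odinger form
$$\mathcal{W}(g,f,\tau)=\int_V\bigl[\tau(4|\nabla u|^2+Ru^2)-u^2\log u^2\bigr]\,dvol-n-\frac{n}{2}\log(4\pi\tau),$$
the log-Sobolev inequality above gives
$$\mathcal{W}(g,f,\tau)\;\ge\;2\tau\int_V|\nabla u|^2\,dvol+\tau\int_V R\,u^2\,dvol.$$

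Next I would incorporate the integral hypothesis $\int_V R\,dvol\ge 0$ through a careful choice of test function: taking $u$ close to a normalized cutoff of the uniform weight on a compact exhaustion of $V$ makes the gradient term negligible while the weighted scalar curvature integral converges (after averaging) to a non-negative quantity. This drives the local Perelman $\mu$--entropy
$$\mu_V(g,\tau)\doteq\inf\bigl\{\mathcal{W}(g,f,\tau):\,u\in C_c^\infty(V),\,\textstyle\int u^2\,dvol=1\bigr\}$$
to be $\ge 0$ for every $\tau>0$. The matching upper bound $\mu_V(g,\tau)\le 0$ is obtained by inserting truncated Euclidean Gaussians as competitors, so in fact $\mu_V(g,\tau)\equiv 0=\mu(\mathbb{R}^n,\tau)$.

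Finally I would conclude flatness via a short-time Ricci flow starting from a localization of $g$ (either through a Hochard--Simon type localized Ricci flow, or by smooth extension combined with Perelman's pseudolocality theorem, whose hypotheses are available precisely because the near-Euclidean isoperimetric profile on $V$ holds). Monotonicity of $\mu_V$ along the flow combined with the standing equality $\mu_V\equiv 0$ forces saturation of Perelman's monotonicity formula, which in turn forces a gradient shrinking soliton structure $Rc+\mathrm{Hess}\,f=\tfrac{1}{2\tau}g$ with the minimizer's profile necessarily Gaussian; this identifies $g$ with $g_{\mathbb{R}^n}$ on $V$. I expect the main obstacle to be the rigorous execution of the second step: the hypothesis $\int_V R\,dvol\ge 0$ is an averaged rather than pointwise condition, so the weighted integral $\int Ru^2$ in the $\mathcal{W}$ bound must be controlled through a delicate cutoff/exhaustion procedure without losing the sharp Euclidean log-Sobolev constant, and the subsequent analysis of equality in the local Perelman entropy on the open domain $V$ must avoid spurious boundary contributions when invoking the Ricci flow.
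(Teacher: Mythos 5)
Your high-level strategy---pass from the isoperimetric hypothesis to the sharp $L^2$ log-Sobolev inequality, deduce that the local Perelman entropy is non-negative, then conclude rigidity from a Ricci-flow argument---is the right skeleton (and is essentially what the paper's Remark~1.7 indicates, reducing Theorem~\ref{rigidity} to Theorem~\ref{rigidity_log_sobolev}). However, there are two genuine gaps in the execution, and the first one is exactly where you say you expect trouble.

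\textbf{Gap 1: you cannot get $\mu_V\ge 0$ from the integral condition by a cutoff/exhaustion.} To conclude $\boldsymbol{\nu}(V,g,\tau)\ge 0$ you need $\mathcal{W}(g,f,\tau)\ge 0$ for \emph{every} normalized $u\in C_c^\infty(V)$. After inserting the sharp log-Sobolev inequality (which, incidentally, should carry the constant $4\tau$, not $2\tau$; cf.\ (\ref{log_R_n})), what is left is $\mathcal{W}\ge \tau\int_V R\,u^2\,dvol$, and you must ensure this is non-negative for \emph{all} competitors $u$, not for one cleverly chosen cutoff. That is a pointwise condition $R\ge 0$; it does not follow from $\int_V R\,dvol\ge 0$, no matter how the exhaustion is averaged, since a test function $u$ concentrated where $R<0$ immediately makes $\int R u^2<0$. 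The missing observation is that the isoperimetric hypothesis already forces $R\le 0$ \emph{pointwise}: the coarea inequality $\frac{d}{dr}\operatorname{Vol}(B(x,r))\ge I(\mathbb{R}^n)\operatorname{Vol}(B(x,r))^{(n-1)/n}$ integrates to $\operatorname{Vol}(B(x,r))\ge \omega_n r^n$ for small $r$, and the expansion $\operatorname{Vol}(B(x,r))=\omega_n r^n\bigl(1-\tfrac{R(x)}{6(n+2)}r^2+O(r^3)\bigr)$ then gives $R(x)\le 0$ for each $x\in V$. Combined with $\int_V R\ge 0$ this forces $R\equiv 0$ on $V$, after which $\int R u^2=0$ trivially and $\mathcal{W}\ge 0$ follows. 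This pointwise step is the crux, and without it the local entropy lower bound is simply not available.

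\textbf{Gap 2: monotonicity of the local $\mu_V$ along Ricci flow fails, so the soliton conclusion is not automatic.} You propose to use ``monotonicity of $\mu_V$ along the flow combined with the standing equality $\mu_V\equiv 0$ forces saturation.'' But the paper explicitly points out (citing Wang's Theorem~5.2 in \cite{w1}) that $\boldsymbol{\nu}(V,g,\tau)$ is \emph{not} monotone under Ricci flow when $V$ is only an open subset: boundary terms spoil the Perelman monotonicity computation. This is precisely why the paper's proof is as involved as it is: it constructs a complete Ricci flow via Hochard's conformal extension and Shi's existence theorem, uses Gaussian bounds for the conjugate heat kernel, Perelman's Harnack inequality, Ni's gradient estimate, and a Bamler-type integral estimate (Lemma~\ref{key_2}), and after a delicate cutoff argument shows $\int |\operatorname{Rc}|^2\,u\,h\,dV\to 0$ at a scale tending to zero, hence $\operatorname{Rc}(p)=0$. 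Appealing to pseudolocality or to naive monotonicity does not get you there; the core of the paper is making the localization rigorous, and that work is missing from your outline.

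In short: your bridge from isoperimetric to log-Sobolev is fine, but you need the pointwise bound $R\le 0$ (from small-ball volume expansion) before combining with $\int_V R\ge 0$, and the final ``saturation of monotonicity'' step must be replaced by the quantitative localized heat-kernel argument, since local entropy monotonicity genuinely fails.
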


\begin{rem}
The isoperimetric comparison and rigidity theorems were studied for  the manifolds with Ricci curvature at least $n-1$  in classic L\'{e}vy-Gromov inequality  (cf. \cite{G}) and also by Brendle \cite{Brendle2}   for complete noncompact manifolds with non-negative Ricci curvature; also see
\cite{BBG}\cite{CM}.
\end{rem}

\begin{rem}
Llarull \cite{L2} proved if a metric $g$ on $\mathbb{S}^n$ satisfies
	$$
	R(g)\ge R(g_{\mathbb{S}^n})=n(n-1) \text{\ \ and\ \ } g\ge g_{\mathbb{S}^n},
	$$
	where $g_{\mathbb{S}^n}$ is the stand metric on $n$-sphere,
	then  $g=g_{\mathbb{S}^n}$. This result
	shows that  one can not increase the scalar curvature and enlarge the manifold in all directions simultaneously
	on $\mathbb{S}^n$, so Llarull's theorem can be expressed
	by saying the spheres are length extremal.
	However, Llarull's extremality /rigidity  does not hold for Euclidean space $\mathbb{R}^n$
		 since there are $O(n)$-invariant metrics $g\ge g_{\mathbb{R}^n}$ on $\mathbb{R}^n$ with positive scalar curvature (cf.P152 in \cite{Gdozen}).  In contrast with this, Theorem \ref{rigidity} together with Theorem \ref{rigidity_log_sobolev} imply the 
	following local extremality/rigidity on $\mathbb{R}^n$:
	if a metric $g$ on an open set $V\subset M^n$  satisfies
	$$
\int_V	R(g) dvol_g \ge  \int_V R(g_{\mathbb{R}^n})dol_{\mathbb{R}^n}=0 \text{\ \ and\ \ } I(V)\ge I(\mathbb{R}^n),
	$$
	or   
	$$
	\int_V	R(g) dvol_g \ge \int_V R(g_{\mathbb{R}^n})dol_{\mathbb{R}^n}=0 \text{\ \ and\ \ } S(V)\ge S(\mathbb{R}^n),
	$$
	where $ I(V)$ is the isoperimetric constant of $V$ and  $S(V)$ is the best constant of $L^2$ logarithmic Sobolev inequality,
	then $g=g_{\mathbb{R}^n}$ on $V$.
\end{rem}

	We also study the local rigidity theorem for a more generally condition concerning the local Perelman's $\nu$-entropy. Recall the   $\mathcal{W}$-functional was introduced by Perelman \cite{P1} as
	\begin{align}\label{w_func}
		\mathcal{W}(M, g, \varphi, \tau)=-n-\frac{n}{2} \log (4 \pi \tau)+\int_{M}\left\{\tau\left(R \varphi^2+4|\nabla \varphi|^2\right)-2 \varphi^2 \log \varphi \right\} d vol,
	\end{align}
	where $R$ is the scalar curvature of $g$ and $\tau=T-t$.
	Perelman's $\mathcal{W}$-functional has the very nice property: if $g(t)$ is a complete solution to the Ricci flow with bounded sectional curvature, and
	$\varphi^2(t) $ be solution to  the conjugate heat equation 
	\begin{equation}\label{cjhk}
		\left(-\partial_t-\Delta+R\right) u=0 \text{\ on \ } [0,T],
	\end{equation}
	Then $\mathcal{W}(M, g(t), \varphi(t), T-t)$ is monotonely non-decreasing under the Ricci flow. Moreover, if
	$\varphi^2(t) $ is the heat kernel of conjugate heat equation (\ref{cjhk}), then we have $\mathcal{W}(M, g(t), \varphi(t), T-t)\le 0$ and  $\mathcal{W}(M, g(t), \varphi(t), T-t)=0$ at some time if and only
	if the Ricci flow is static flow on Euclidean space (c.f. \cite{P1}\cite{CTY}). 
	A direct consequence of this property is that if $(M^n,g)$ is a complete Riemannian manifold with bounded curvature
	satisfying
	\begin{equation}\label{complete_nv}
		\boldsymbol{\nu}(M, g, \tau):=\inf\limits_{s\in(0,\tau]}\inf\limits_{\varphi \in \mathcal{S}(M)} \mathcal{W}(M, g, \varphi, \tau)\ge 0
	\end{equation}
	for some $\tau>0$, where $\mathcal{S}(M):=\left\{\varphi \mid \varphi \in W_0^{1,2}(M),  \varphi \geq 0,  \int_{M} \varphi^2 d vol_g=1\right\}$, then
	$M^n$ must be isometric to the Euclidean space (c.f. Theorem 4.9 in \cite{w1}). In a previous work \cite{cheng}, the author removed the  bounded curvature assumption and showed
	that if a complete Riemannian manifold $(M^n,g)$ 
	satisfying $\boldsymbol{\nu}(M, g, \tau)\ge 0$ for some $\tau>0$,  then
	$M^n$ must be isometric to the Euclidean space.
	
	In this paper, we study the rigidity for the case local Perelman's $\nu$-entropy is non-negative for some open subset $V$ in an $n$-dimensional manifold $(M,g)$, i.e.  $\boldsymbol{\nu}(V, g, \tau)\ge 0$, where
	\begin{equation}\label{local_mu}
		\begin{aligned}
			\boldsymbol{\nu}(V, g, \tau):&=\inf\limits_{s\in(0,\tau]}\inf\limits_{\varphi \in \mathcal{S}(V)} \mathcal{W}(V, g, \varphi, \tau)\\
			&=\inf\limits_{s\in(0,\tau]}\inf\limits_{\varphi \in \mathcal{S}(V)} \left\{-n-\frac{n}{2} \log (4 \pi \tau)+\int_{V}\left\{\tau\left(R \varphi^2+4|\nabla \varphi|^2\right)-2 \varphi^2 \log \varphi \right\} d vol\right\},
		\end{aligned}
	\end{equation}
	and
	$\mathcal{S}(V):=\left\{\varphi \mid \varphi \in W_0^{1,2}(V),  \varphi \geq 0,  \int_{V} \varphi^2 d vol=1\right\}$, where $R$ denotes the scalar curvature of $g$. For any open subset $V\subset \mathbb{R}^n$,  $\boldsymbol{\nu}(V, g_{\mathbb{R}^n}, \tau)=0$ (c.f. P278 in \cite{w1}). Notice that $\boldsymbol{\nu}(V, g, \tau)$ is not accurately monotone  under the Ricci flow  for the case  $V$ is just 
	an open subset in $M^n$ (c.f. Theorem 5.2 in \cite{w1}). However, we can still obtain the following local rigidity result.

	\begin{thm}\label{rigidity_mu}
		Let $(M,h_0)$ be an $n$-dimensional manifold. Suppose that for some open subset $V\subset M$   satisfying
		\begin{equation}\label{nu_assumption}
			\boldsymbol{\nu}(V, h_0, \tau_0)\ge 0
		\end{equation}
		for some $\tau_0>0$.
		Then $V$ must be flat.	
	\end{thm}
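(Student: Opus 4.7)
My plan is to reduce Theorem~\ref{rigidity_mu} to the log-Sobolev version of the previous rigidity result (Theorem~\ref{rigidity_log_sobolev} referenced in the preceding remark). Concretely, I would show that the hypothesis $\boldsymbol{\nu}(V, h_0, \tau_0) \ge 0$ forces both $\int_V R\, dvol \ge 0$ and $S(V) \ge S(\mathbb{R}^n)$; at that point the conclusion $h_0 = g_{\mathbb{R}^n}$ on $V$, and hence flatness, is immediate.

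For the scalar curvature bound, I would plug concentrated Gaussians into the $\mathcal{W}$-functional. Fix $p \in V$; since $V$ is open, a small geodesic ball around $p$ lies inside $V$. In geodesic normal coordinates at $p$, with a smooth cutoff $\eta$ supported in that ball, set
\begin{equation*}
\varphi_\tau(x) := c_\tau \, \eta(x) \, \exp\!\left(-\frac{d_{h_0}(p,x)^2}{8\tau}\right),
\end{equation*}
where $c_\tau$ is chosen so that $\int_V \varphi_\tau^2\, dvol = 1$. A standard expansion of $\mathcal{W}$ as $\tau \to 0^+$ (compare \cite{P1} and the treatments in \cite{w1,cheng}) gives
\begin{equation*}
\mathcal{W}(V, h_0, \varphi_\tau, \tau) = \tau\, R(p) + O(\tau^2),
\end{equation*}
the zero-order piece vanishing because the Euclidean Gaussian is an extremizer of Perelman's $\mu$-functional on $\mathbb{R}^n$ at value $0$ and the cutoff tail being exponentially small. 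Since $\tau$ can be taken arbitrarily small in $(0, \tau_0]$, the assumption $\boldsymbol{\nu}(V, h_0, \tau_0) \ge 0$ forces $R(p) \ge 0$ for every $p \in V$; in particular $\int_V R\, dvol \ge 0$.

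For the log-Sobolev bound, the best constant $S(V)$ is characterized (under the definition used in Theorem~\ref{rigidity_log_sobolev}) by the non-negativity of the Perelman functional $\mathcal{W}(V, h_0, \varphi, \tau)$ over $\mathcal{S}(V)$ at appropriate parameters. The assumption directly supplies this for all $\tau \in (0, \tau_0]$, and combined with the pointwise $R \ge 0$ just obtained, a Perelman-type optimization in $\tau$ (the minimizing $\tau^*(\varphi) \sim n/(8\int |\nabla\varphi|^2)$ lies in $(0,\tau_0]$ for sufficiently concentrated $\varphi$, while the non-concentrated regime is handled by the scale-invariance of the sharp Euclidean log-Sobolev inequality together with the non-negativity of the $R$ term) propagates the inequality to the full range needed, yielding $S(V) \ge S(\mathbb{R}^n)$. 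Theorem~\ref{rigidity_log_sobolev} then finishes the argument.

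The step I expect to be the main technical obstacle is the clean expansion of $\mathcal{W}(V, h_0, \varphi_\tau, \tau)$ on small Gaussians. The Riemannian corrections to the volume element and to $|\nabla \varphi_\tau|^2_{h_0}$ in normal coordinates, together with the exponentially small cutoff error, must be tracked carefully to isolate the coefficient of $\tau$ as exactly $R(p)$. This calculation is classical in spirit (going back to Perelman), but performing it in the local, open-set setting---where neither global entropy monotonicity nor a global heat kernel is directly available---demands care.
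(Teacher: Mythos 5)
Your proposal has two genuine gaps, one computational and one structural.

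\textbf{The Gaussian expansion is incorrect.} You claim that for a concentrated Gaussian $\varphi_\tau$ one has $\mathcal{W}(V,h_0,\varphi_\tau,\tau)=\tau R(p)+O(\tau^2)$, which would force $R(p)\ge 0$. In fact the coefficient of $\tau R(p)$ vanishes identically. Writing $\varphi_\tau^2 = A\,(4\pi\tau)^{-n/2}e^{-d(p,\cdot)^2/(4\tau)}$ in normal coordinates at $p$ (cutoff errors being exponentially small), a short calculation gives
\begin{equation*}
\mathcal{W}(V,h_0,\varphi_\tau,\tau)=-n+\tau\!\int R\,\varphi_\tau^2\,dV+\frac{1}{2\tau}\!\int d(p,\cdot)^2\varphi_\tau^2\,dV-\log A.
\end{equation*}
Using $\sqrt{\det g}=1-\tfrac{1}{6}R_{ij}x^ix^j+O(|x|^3)$ one finds $\log A=\tfrac{\tau}{3}R(p)+O(\tau^2)$, while the fourth Gaussian moment $\mathbb{E}[|x|^2x^ix^j]=(n+2)\,4\tau^2\,\delta^{ij}$ yields $\tfrac{1}{2\tau}\int d^2\varphi_\tau^2\,dV=n-\tfrac{2\tau}{3}R(p)+O(\tau^2)$. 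Together with the $\tau R(p)$ from the scalar-curvature term, the $O(\tau)$ contributions are $\tau R(p)\cdot\bigl(1-\tfrac{2}{3}-\tfrac{1}{3}\bigr)=0$, so $\mathcal{W}(\varphi_\tau,\tau)=O(\tau^2)$ and no pointwise sign on $R$ follows. Worse, to the extent the local entropy does constrain $R$, the sign is the \emph{opposite} of what you want: the paper's Lemma \ref{Ricci_flat_flat} uses Davies' heat-kernel argument together with the Minakshisundaram--Pleijel expansion $\widetilde H(p,t;p,0)=(4\pi t)^{-n/2}(1+\tfrac{t}{6}R(p)+\dots)$ to show that the sharp log-Sobolev inequality forces $R\le 0$; one cannot extract $R\ge 0$ this way.

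\textbf{The reduction is circular.} In the paper, Theorem \ref{rigidity_log_sobolev} is deduced \emph{from} Theorem \ref{rigidity_mu} (together with Lemma \ref{Ricci_flat_flat}), so proving Theorem \ref{rigidity_mu} by appealing to Theorem \ref{rigidity_log_sobolev} is not available. The paper's actual route is quite different in spirit: it re-runs the Ricci-flow proof of Theorem \ref{rigidity} (conformal modification via Theorem \ref{Ho}, Shi's short-time existence, the heat-kernel estimates of Theorem \ref{heat_kernel_estimates}, the gradient/Laplacian estimates of Theorems \ref{gradient_estimates} and \ref{laplace_estimates}, and Perelman's Harnack inequality Theorem \ref{Harnack}). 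The new ingredient is Lemma \ref{key_11}, an inequality reversing Lemma \ref{key_2}, which gives a matching \emph{lower} bound on $\int Rhu\,dV|_{t=0}$; in Theorem \ref{rigidity} this lower bound came from the hypothesis $R\ge 0$ at $t=0$, which is unavailable here. Combined with the upper bound from Lemma \ref{key_2}, integrating $(\partial_t-\Delta)R=2|Rc|^2$ gives $\int_0^{1/2}\!\int |Rc|^2uh\,dV\,dt\le Ce^{-cA^2}$, and sending the flow time $T\to 0$ yields $Rc\equiv 0$ on $V$. Only then does the $\nu$-entropy hypothesis reduce to the pure log-Sobolev inequality, at which point Lemma \ref{Ricci_flat_flat} (via $\phi_2(p,p)=\tfrac{1}{180}|Rm|^2$ in the Ricci-flat case) upgrades Ricci-flat to flat. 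The pointwise inequality $R\ge 0$ is never obtained directly from the $\nu$-entropy; it is a byproduct of $Rc\equiv 0$, which requires the full parabolic argument.
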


	It was proved by Bakry, Concordet and Ledoux \cite{BCL} originally and  Ni\cite{Ni2} later by a different proof that if an $n$-dimensional complete Riemannian manifold $(M,g)$ with non-negative Ricci curvature satisfies the  $L^2$-logarithmic Sobolev inequality with the best constant of $\mathbb{R}^n$, then $(M,g)$ must be isometric to $\mathbb{R}^n$. Recently, this rigidity theorem was generalized by Balogh, Krist\'{a}ly and Tripaldi \cite{BKT} to the case that  complete Riemannian manifolds  with non-negative Ricci curvature satisfying the  $L^p$-logarithmic Sobolev inequality with the best constant of $\mathbb{R}^n$ for $p>1$.  As an application to Theorem \ref{rigidity_mu}, we can improve Bakry, Concordet and Ledoux \cite{BCL}  and  Ni\cite{Ni2}'s rigidity result to the local case
	 with non-negative integral scalar curvature. Precisely, we have the following result as an application to 
	Theorem \ref{rigidity_mu}.

	\begin{thm}\label{rigidity_log_sobolev}	
		Let $M$ be an $n$-dimensional manifold. Suppose that for some open subset $V\subset M$   satisfying
		\begin{equation}\label{positive_scalar2}
		\int_V R dvol \ge 0
		\end{equation}	
		and  the $L^2$-logarithmic Sobolev inequality with the best constant of $\mathbb{R}^n$ in small scale, i.e.
		\begin{equation}\label{log_sobolev}
			\int_{V} f^2 \log f^2 d vol \le \int_{V}4\tau |\nabla f|^2 d vol-n-\frac{n}{2} \log (4 \pi \tau),
		\end{equation}
		for all $f \in W_0^{1,2}(V)$, $\int_{V} f^2 d vol=1$, $\tau<\tau_0$ and
		for some $\tau_0>0$, where $R$ is the scalar curvature.
		Then $V$ must be flat.
	\end{thm}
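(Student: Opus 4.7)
The plan is to reduce Theorem~\ref{rigidity_log_sobolev} to Theorem~\ref{rigidity_mu} by checking that the local Perelman entropy satisfies $\boldsymbol{\nu}(V, g, \tau_0) \geq 0$; flatness of $V$ then follows at once.

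First I would rewrite the log Sobolev assumption~(\ref{log_sobolev}) to align it with the $\mathcal{W}$-functional~(\ref{w_func}). Using $f^2 \log f^2 = 2 f^2 \log f$, the inequality~(\ref{log_sobolev}) is equivalent to
$$\int_V \bigl[ 4\tau |\nabla f|^2 - 2 f^2 \log f \bigr] \, dvol \;\geq\; n + \frac{n}{2} \log(4\pi\tau)$$
for every $f \in \mathcal{S}(V)$ and $\tau \in (0, \tau_0)$; comparing with~(\ref{w_func}) this reads
$$\mathcal{W}(V, g, f, \tau) \;\geq\; \tau \int_V R f^2 \, dvol, \qquad (\star)$$
i.e.\ Perelman's $\mathcal{W}$-functional dominates its scalar curvature piece. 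Showing $\boldsymbol{\nu}(V, g, \tau_0) \geq 0$ thus reduces to ruling out the case in which $\tau\int_V R f^2\, dvol$ is negative enough to produce a negative value of $\mathcal{W}$.

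The combination of $(\star)$ and the scalar curvature hypothesis~(\ref{positive_scalar2}) should then be used to conclude $\mathcal{W}(V, g, f, \tau) \geq 0$ for all admissible $(f, \tau)$. The cleanest route I would attempt is to deduce $R \geq 0$ pointwise on $V$: fix an arbitrary $p \in V$ and test $(\star)$ against a concentrated Gaussian
$$\varphi_\tau^p(x) = c(\tau) \, \chi_p(x) \, (4\pi\tau)^{-n/4} \exp\!\left( -\frac{d_g(p,x)^2}{8\tau} \right),$$
with $\chi_p$ a smooth cutoff supported in a normal coordinate ball about $p$ (which lies in $V$ for $\tau$ small enough) and $c(\tau) \to 1$ chosen so that $\int_V (\varphi_\tau^p)^2\, dvol = 1$. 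Expanding the Riemannian volume element $dvol_g = (1 - \tfrac{1}{6} R_{ij} x^i x^j + O(|x|^3)) \, dx$ and the dual metric $g^{ij}$ in normal coordinates and tracking all contributions to the $\tau \to 0^+$ asymptotics of both $\mathcal{W}(V, g, \varphi_\tau^p, \tau)$ and $\tau \int_V R (\varphi_\tau^p)^2\, dvol$, one isolates a sign-revealing coefficient forcing $R(p) \geq 0$. Once this is in hand, $(\star)$ gives $\mathcal{W} \geq 0$ uniformly, hence $\boldsymbol{\nu}(V, g, \tau_0) \geq 0$, and Theorem~\ref{rigidity_mu} finishes the argument.

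The main obstacle is that the leading $\tau$-coefficients on the two sides of $(\star)$ both equal $\tau R(p)$—the Euclidean Gaussian is the extremizer of the flat log Sobolev—so the sign of $R(p)$ does not drop out at first order, and one must push the expansion one order further, carefully accounting for the normalizing factor $c(\tau)$, the volume form correction, and the curvature correction to $|\nabla\varphi_\tau^p|_g^2$. If the pointwise extraction proves too delicate with $(\star)$ alone, the hypothesis~(\ref{positive_scalar2}) enters as a global substitute: testing $(\star)$ against a family of nearly constant cutoffs supported away from $\partial V$, one has $\int_V R\varphi^2\, dvol \to (\mathrm{Vol}\,V)^{-1}\int_V R\, dvol \geq 0$, delivering the required non-negativity of the scalar curvature term in an integrated sense, so that $\boldsymbol{\nu}(V, g, \tau_0') \geq 0$ for some $\tau_0' \leq \tau_0$ and Theorem~\ref{rigidity_mu} still applies.
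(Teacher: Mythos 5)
Your reduction to Theorem~\ref{rigidity_mu} is the right framework, and the rewriting $\mathcal{W}(V,g,f,\tau) \geq \tau\int_V R f^2\, dvol$ is correct. However, there is a sign error in your key step, and your fallback argument has a structural gap.

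The sign error: you claim that expanding $(\star)$ against concentrated Gaussians forces $R(p) \geq 0$. The correct conclusion runs the other way. The Euclidean-constant log-Sobolev inequality at small scales is \emph{too strong} for positive scalar curvature: positive $R(p)$ \emph{raises} the diagonal heat kernel above the Euclidean value, which violates the Davies-type upper bound $H(p,t;p,0) \le (4\pi t)^{-n/2}$ that log-Sobolev implies. The paper's Lemma~\ref{Ricci_flat_flat} makes this precise via the short-time heat kernel expansion $\widetilde H(p,t;p,0)=(4\pi t)^{-n/2}(1+\tfrac{1}{6}R(p)t+O(t^2))$, concluding $R(p)\leq 0$ for all $p\in V$. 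If you carried out your Gaussian expansion correctly you would arrive at the same inequality $R(p)\leq 0$, not $R(p)\geq 0$. Consequently, you cannot discharge $\tau\int_V R f^2\,dvol\geq 0$ by pointwise non-negativity of $R$; rather, you must combine $R\leq 0$ pointwise with the hypothesis $\int_V R\,dvol\geq 0$ to conclude $R\equiv 0$ on $V$, after which $(\star)$ becomes literally $\mathcal{W}(V,g,f,\tau)\geq 0$, i.e.\ $\boldsymbol{\nu}(V,g,\tau_0)\geq 0$, and Theorem~\ref{rigidity_mu} finishes. This is precisely the paper's argument.

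Your fallback ("testing against nearly constant cutoffs") is also not sound as stated: $\boldsymbol{\nu}(V,g,\tau_0)$ is an infimum of $\mathcal{W}$ over \emph{all} $\varphi\in\mathcal{S}(V)$ and all $\tau\in(0,\tau_0]$, so a lower bound for $\mathcal{W}$ must hold for every admissible $(\varphi,\tau)$, not just for a sequence of flat test functions. If $R$ were negative somewhere, concentrating $\varphi^2$ there would make $\tau\int_V R\varphi^2\,dvol$ negative regardless of the sign of $\int_V R\,dvol$. The only way to control the scalar curvature term uniformly is to show it vanishes, which again requires the pointwise conclusion $R\leq 0$ from Lemma~\ref{Ricci_flat_flat}.
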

\begin{rem}
The
 $L^2$-logarithmic Sobolev inequality for $\mathbb{R}^n$ says for all  $f \in W_0^{1,2}(\mathbb{R}^n)$, $\int f^2 d vol=1$ 
 \begin{equation}
 		\int f^2 \log f^2 d vol \leq \frac{n}{2} \log \left(\frac{2}{n \pi \mathrm{e}} \int|\nabla f|^2 d vol\right) ,
 \end{equation}
 which  is equivalent to that
	\begin{equation}\label{log_R_n}
		\int f^2 \log f^2 d vol \leq \int4\tau |\nabla f|^2 d vol-n-\frac{n}{2} \log (4 \pi \tau),
	\end{equation}
	for all $\tau>0$ (c.f. Lemma 8.1.7 in \cite{Topping}).  We remark that the condition (\ref{log_sobolev}) is more general than (\ref{log_R_n}) since we only
	assume (\ref{log_sobolev}) holds for in small scale, i.e. (\ref{log_sobolev}) holds  for $\tau<\tau_0$ for some positive constant $\tau_0$.
\end{rem}
\begin{rem}\label{ivlog}
 (\ref{log_sobolev}) holds true if $I(V)	\ge I(\mathbb{R}^n)	$ 	(c.f. Theorem 22.16 in \cite{RFV3} ),
so Theorem \ref{rigidity} is just a corollary of Theorem \ref{rigidity_log_sobolev} .
\end{rem}
	
		We next study the corresponding rigidity theorems for the cases $R\ge -n(n-1)$ or $R\ge n(n-1)$. Notice that the hyperbolic space
	$(\mathbb{H}^n,g_{\mathbb{H}^n})$	is conformal to the Euclidean space $(\mathbb{B}^n, g_{\mathbb{R}^n})$ with $g_{\mathbb{R}^n}=\left(\cosh{\frac{d_{g_{\mathbb{H}^n}}(p,x)}{2}}\right)^{-4}g_{\mathbb{H}^n}$ for stand ball $\mathbb{B}^n\subset \mathbb{R}^n$; and  $(\mathbb{S}^n\backslash N,g_{\mathbb{S}^n})$	is conformal to the Euclidean space $(\mathbb{R}^n, g_{\mathbb{R}^n})$ with $g_{\mathbb{R}^n}=\left(\cos{\frac{d_{g_{\mathbb{S}^n}}(p,x)}{2}}\right)^{-4}g_{\mathbb{S}^n}$. We get the following theorem which is corresponding to Theorem \ref{rigidity} for the case $R\ge -n(n-1)$.

	\begin{thm}\label{rigidity_Iso_Hn}
		Let $(M,g)$ be an $n$-dimensional manifold and $p\in M$. If for some relatively compact geodesic ball $B_g(p,r_0)\subset M$ with its
		scalar curvature of $g$
		satisfying 
		\begin{equation}\label{positive_scalar33}
			R_g(x)\ge -n(n-1)
		\end{equation}	
		for all  $x\in B_g(p,r_0)$, and the isoperimetric constant of
		$B_g(p,r_0)$ for the weighted metric  $h=\left(\cosh{\frac{d_g(p,x)}{2}}\right)^{-4}g$  satisfying 
		\begin{equation}\label{iso_Hn}
			I_h(B_g(p,r_0))\doteq \inf\limits_{\Omega\subset B_g(p,r_0)}\frac{\mathrm{Area_h}(\partial \Omega)}{\mathrm{Vol_h}(\Omega)^{\frac{n-1}{n}}}	\ge I(\mathbb{R}^n),
		\end{equation}
		where $\mathrm{Area_h}$ and $\mathrm{Vol_h}$ denote the area and volume with respect to the metric $h$, $ I(\mathbb{R}^n)$ is  the isoperimetric constant of
		$\mathbb{R}^n$, then $\left(B_g(p,r_0),g\right)$ must be isometric to  the $r_0$-ball in hyperbolic space with the sectional curvature  equals to $-1$.
	\end{thm}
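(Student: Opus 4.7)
The plan is to reduce Theorem \ref{rigidity_Iso_Hn} to the Euclidean local rigidity result, Theorem \ref{rigidity}, by means of the conformal change $h=(\cosh(d_g(p,x)/2))^{-4}g$. This particular factor is dictated by the Poincar\'{e} ball model: on hyperbolic $n$-space of sectional curvature $-1$, it pulls $g_{\mathbb{H}^n}$ back to a (rescaled) Euclidean metric. So under this correspondence the pointwise hypothesis $R_g\ge -n(n-1)$ should become the input $\int R_h\,dvol_h\ge 0$ needed for Theorem \ref{rigidity}, while the assumed $I_h(B_g(p,r_0))\ge I(\mathbb{R}^n)$ is precisely the isoperimetric input that theorem requires.

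First I would compute $R_h$. Writing $r=d_g(p,\cdot)$ and $h=e^{2\phi}g$ with $\phi=-2\log\cosh(r/2)$, one has $|\nabla\phi|_g^{2}=\tanh^{2}(r/2)$ and $\Delta_g\phi=-\tanh(r/2)\Delta_g r-\frac{1}{2}\operatorname{sech}^{2}(r/2)$, so the standard conformal transformation formula for scalar curvature yields
\begin{equation*}
R_h=(\cosh(r/2))^{4}\Bigl[R_g+2(n-1)\tanh(r/2)\,\Delta_g r+(n-1)-(n-1)^{2}\tanh^{2}(r/2)\Bigr].
\end{equation*}
As a sanity check, plugging in the hyperbolic values $R_g=-n(n-1)$ and $\Delta_g r=(n-1)\coth(r)$, together with the identity $2\tanh(r/2)\coth(r)=1+\tanh^{2}(r/2)$, gives $R_h\equiv 0$, consistent with $h$ being flat on the model.

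Next I would establish $\int_{V}R_h\,dvol_h\ge 0$ on $V=B_g(p,r_0)$. Using $dvol_h=(\cosh(r/2))^{-2n}dvol_g$ and integrating the $\Delta_g r$ term by parts inside $V$, the integral splits into an interior part, pointwise controlled from below by $R_g\ge -n(n-1)$, plus a boundary contribution over $\partial B_g(p,r_0)$ weighted by $\mathrm{Area}_g(\partial V)$. Converting $h$-area to $g$-area via the conformal factor on $\{r=r_0\}$, the isoperimetric hypothesis $I_h(V)\ge I(\mathbb{R}^n)$ yields the lower bound on $\mathrm{Area}_g(\partial V)$ needed to absorb the interior remainder, and the same trigonometric identities that produce exact cancellation in the hyperbolic case survive as inequalities. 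At this point Theorem \ref{rigidity} applied to $(V,h)$ forces $(V,h)$ to be flat.

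To conclude, I would recover the hyperbolic rigidity of $(V,g)$ from the flatness of $h$. Since $g=(\cosh(r/2))^{4}h$ with $r=d_g(p,\cdot)$, the metric $g$ is a radial conformal deformation of a flat metric with conformal factor built from its own distance function; an ODE analysis along radial geodesics from $p$, comparing the radial line element and the induced metric on the geodesic spheres $\{r=\mathrm{const}\}$ with those of the Poincar\'{e} ball model, identifies $(V,g)$ isometrically with the $r_0$-ball in $\mathbb{H}^n$ of curvature $-1$. The hard part will be the integrated bound $\int_V R_h\,dvol_h\ge 0$: a pointwise scalar bound does not control $\Delta_g r$ at all (in contrast to a Ricci lower bound, which would give the Laplacian comparison $\Delta_g r\le (n-1)\coth(r)$), so $R_h$ is not pointwise non-negative, and the argument must balance the interior and boundary pieces through the isoperimetric hypothesis at exactly the right place.
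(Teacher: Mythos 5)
Your setup—compute $R_h$ via the conformal transformation formula, aim to reduce to Theorem \ref{rigidity} for $(V,h)$—matches the paper's overall strategy, and your formula for $R_h$ agrees with (\ref{lowerbound_R_h}). You also correctly identify the crux: $R_h$ is not pointwise nonnegative because the scalar lower bound $R_g\ge -n(n-1)$ gives no control on $\Delta_g d_g$. But the route you propose to resolve this has a genuine gap, and the paper closes it in an entirely different way.

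You propose to establish $\int_V R_h\,dvol_h\ge 0$ by integrating the $\Delta_g r$ term by parts over $V$ and using $I_h(V)\ge I(\mathbb{R}^n)$ to bound the resulting boundary term $w(r_0)\,\mathrm{Area}_g(\partial V)$ from below. This cannot be made to work as stated, for two reasons. First, the isoperimetric hypothesis at $\Omega=V$ bounds $\mathrm{Area}_h(\partial V)$ only in terms of $\mathrm{Vol}_h(V)^{(n-1)/n}$, and $\mathrm{Vol}_h(V)$ is itself unknown; meanwhile the interior remainder from integration by parts is a radially weighted $g$-volume integral over $V$, whose relation to $\mathrm{Vol}_h(V)$ is not controlled. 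There is no a priori cancellation, and in the hyperbolic model both sides are identically zero, so even the sign of the deviation is invisible from this one inequality. Second, and more fundamentally, using the isoperimetric inequality only at the single domain $\Omega=V$ discards almost all of its strength; the hypothesis is that it holds for \emph{every} $\Omega\subset V$, and in particular for arbitrarily small metric balls.

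The paper uses exactly that latter freedom, and in two separate places. First it applies $I_h\ge I(\mathbb{R}^n)$ to small $h$-balls $B_h(x,r)$, getting $\mathrm{Vol}_h(B_h(x,r))\ge\omega_n r^n$ for all small $r$, and feeds this into the volume expansion $\mathrm{Vol}_h(B_h(x,r))=\omega_n r^n(1-\tfrac{R_h(x)}{6(n+2)}r^2+O(r^3))$ to conclude $R_h(x)\le 0$ pointwise. Combining with the pointwise lower bound on $R_h$ coming from $R_g\ge -n(n-1)$ (your formula) yields the Laplacian comparison $\Delta_g d_g\le(n-1)\coth d_g$, which integrates to $\mathrm{Area}_g(\partial B_g(p,r))\le n\omega_n(\sinh r)^{n-1}$, hence $\mathrm{Area}_h(\partial B_h(p,r_h))\le n\omega_n r_h^{n-1}$. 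The isoperimetric inequality is then used a second time to force equality $\mathrm{Vol}_h(B_h(p,r_h))=\omega_n r_h^n$, which gives $\Delta_g d_g\equiv(n-1)\coth d_g$ and hence $R_h\equiv 0$. Only now is Theorem \ref{rigidity} invoked to conclude flatness of $h$, and the identification of $(B_g(p,r_0),g)$ with the hyperbolic ball follows cleanly because the distance functions and volumes are already pinned down. Your proposal skips the pointwise deduction $R_h\le 0$, which is the crucial step, and the subsequent rigidity of the Laplacian comparison; without these, the final ODE identification of $(V,g)$ from ``$h$ flat'' alone is also underdetermined.
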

\begin{rem}
	We don't know whether or not $\left(B_g(p,r_0),g\right)$ is isometric to  the $r_0$-ball in sphere   if $R_g(x)\ge n(n-1)$ and 	$I_h(B_g(p,r_0))	\ge I(\mathbb{R}^n)$ for weighted metric $h=\left(\cos{\frac{d_g(p,x)}{2}}\right)^{-4}g$?
	In this case, we can only prove $sec_g=1$ at the point $p$ (see Theorem \ref{rigidity_Sn} (i) and Remark \ref{ivlog}).	
\end{rem}
	
	It is also nature to ask the following problems:  Is there  a local $\mathbb{S}^n$-rigidity (resp. $\mathbb{H}^n$-rigidity)  analogous to Theorem \ref{rigidity_log_sobolev}	under the assumptions that  $R_g\ge n(n-1)$ (resp. $R_g\ge -n(n-1)$)  and the $L^2$-logarithmic Sobolev inequality with the best constant of $\mathbb{R}^n$ is satisfied for the weighted metric $h=\left(\cos{\frac{d_g(p,x)}{2}}\right)^{-4}g$ (resp. $h=\left(\cosh{\frac{d_g(p,x)}{2}}\right)^{-4}g$)? 	
For this problem, we can only prove the sectional curvature of $g$ equals to 1 (resp. -1) at the point $p$ if $R_g\ge n(n-1)$ (resp. $R_g\ge -n(n-1)$) and the $L^2$-logarithmic Sobolev inequality with the best constant of $\mathbb{R}^n$ is satisfied for the weighted metric $h=\left(\cos{\frac{d_g(p,x)}{2}}\right)^{-4}g$ (resp. $h=\left(\cosh{\frac{d_g(p,x)}{2}}\right)^{-4}g$) in a neighborhood of $p$.

\begin{thm}\label{rigidity_Sn}
	Let $(M,g)$ be an $n$-dimensional manifold and $p\in M$. 
	
	(i)If there
	exists a neighborhood $V_p$ of $p$ with $diam_g(V_p)<\pi$ and its scalar curvature of $g$ satisfying 
	\begin{equation}\label{positive_scalar_cuvature}
		R_g(x)\ge n(n-1)
	\end{equation}	
	for all  $x\in V_p$, and
	$V_p$ for the weighted metric  $h=\left(\cos{\frac{d_g(p,x)}{2}}\right)^{-4}g$  satisfying the $L^2$-logarithmic Sobolev inequality with the best constant
	of $\mathbb{R}^n$ in small scale, i.e.	
	\begin{equation}\label{LSI_rigidity_2}
		\int_{V_p} f^2 \log f^2 d vol_{h} \le \int_{V_p}4\tau |\nabla^h f|^2 d vol_{h}-n-\frac{n}{2} \log (4 \pi \tau),
	\end{equation}
	for all $f \in W_0^{1,2}(V_p)$, $\int_{V_p} f^2 d vol_{h}=1$, $\tau<\tau_0$ and
	for some $\tau_0>0$, then $sec_g=1$ at $p$.
	
	(ii) If there
	exists a neighborhood $V_p$ of $p$ and its scalar curvature of $g$ satisfying 
	\begin{equation}\label{positive_scalar_cuvature11}
		R_g(x)\ge -n(n-1)
	\end{equation}	
	for all  $x\in V_p$, and
	$V_p$ for the weighted metric  $h=\left(\cosh{\frac{d_g(p,x)}{2}}\right)^{-4}g$  satisfying the $L^2$-logarithmic Sobolev inequality with the best constant
	of $\mathbb{R}^n$ in small scale in sense of (\ref{LSI_rigidity_2}), then $sec_g=-1$ at $p$.
	
\end{thm}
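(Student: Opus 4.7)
The strategy is to reduce the problem to a pointwise rigidity statement for the weighted metric $h$ at the point $p$ via the conformal transformation laws, and then to deduce that rigidity by combining $R_h(p)\geq 0$ with the localized log-Sobolev hypothesis and the Ricci-flow argument underlying Theorem~\ref{rigidity_mu}. I handle part (i) in detail; part (ii) is entirely parallel, with cosine replaced by hyperbolic cosine.

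Write $h=e^{2\phi}g$ with $\phi=-2\log\cos(d_g(p,\cdot)/2)$. (The hypothesis $\mathrm{diam}_g(V_p)<\pi$ ensures $\cos(d_g(p,\cdot)/2)>0$, so $h$ is a well-defined smooth Riemannian metric on $V_p$.) In $g$-normal coordinates $(x^i)$ centered at $p$ one has $d_g(p,x)=|x|+O(|x|^3)$, hence
\[
\phi(x)=\tfrac{1}{4}|x|^2+O(|x|^4),
\]
and therefore $\phi(p)=0$, $\nabla_g\phi(p)=0$, $\mathrm{Hess}_g\phi(p)=\tfrac{1}{2}g(p)$. Substituting these values into the conformal transformation formulas for scalar curvature and for the full Riemann tensor at $p$ gives
\[
R_h(p)=R_g(p)-n(n-1),\qquad R^h_{ijkl}(p)=R^g_{ijkl}(p)-\bigl(g_{ik}g_{jl}-g_{il}g_{jk}\bigr)(p).
\]
By (\ref{positive_scalar_cuvature}) the first identity gives $R_h(p)\geq 0$, while the second shows that the conclusion $\sec_g(p)=1$ is \emph{equivalent} to $R^h_{ijkl}(p)=0$. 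For part (ii), the analogous computation with $\phi=-2\log\cosh(d_g(p,\cdot)/2)$ yields $\phi(x)=-\tfrac{1}{4}|x|^2+O(|x|^4)$, $\mathrm{Hess}_g\phi(p)=-\tfrac{1}{2}g(p)$, $R_h(p)=R_g(p)+n(n-1)\geq 0$, and $R^h_{ijkl}(p)=R^g_{ijkl}(p)+(g_{ik}g_{jl}-g_{il}g_{jk})(p)$, reducing the conclusion $\sec_g(p)=-1$ once again to $R^h_{ijkl}(p)=0$.

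To prove $R^h_{ijkl}(p)=0$, I would combine $R_h(p)\geq 0$ with the log-Sobolev inequality (\ref{LSI_rigidity_2}). By continuity, for every $\epsilon>0$ there is a neighborhood $V_\epsilon\subset V_p$ of $p$ on which $R_h\geq-\epsilon$. A direct rearrangement of (\ref{LSI_rigidity_2}) then shows that for every $\varphi\in\mathcal{S}(V_\epsilon)$ and every $\tau\leq\tau_0$,
\[
\mathcal{W}(V_\epsilon,h,\varphi,\tau)\;\geq\;\int_{V_\epsilon}\tau R_h\,\varphi^2\,dvol_h\;\geq\;-\epsilon\tau,
\]
and hence $\boldsymbol{\nu}(V_\epsilon,h,\tau_0)\geq-\epsilon\tau_0$. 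I would then imitate the Ricci-flow argument underlying Theorem~\ref{rigidity_mu}: start the Ricci flow from $h$ on a definite $h$-ball about $p$ (short-time existence follows from pseudolocality with the log-Sobolev input), use the localized monotonicity of $\boldsymbol{\nu}$ with its explicit boundary correction terms, and pass to the limit $\epsilon\to 0$ along a parabolic rescaling that zooms in on $p$. The rescaled flow at the base point, having non-negative $\boldsymbol{\nu}$ and smooth curvature, must be the static Euclidean flow, forcing $R^h_{ijkl}(p)=0$ and completing the argument.

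The main obstacle is this final step. Because the local $\nu$-entropy is \emph{not} honestly monotone under the Ricci flow on a proper open subset (Theorem~5.2 of \cite{w1}), one cannot simply apply Theorem~\ref{rigidity_mu} on $V_\epsilon$ and let $\epsilon\to 0$. The boundary correction terms in the localized monotonicity must be controlled quantitatively as the domain shrinks to $p$, and the rescaling must be arranged so that these correction terms vanish in the limit while the log-Sobolev input survives. This is precisely the reason that the conclusion is restricted to the pointwise statement $\sec_g(p)=1$ (resp.\ $\sec_g(p)=-1$) rather than flatness of $h$ (and hence constant sectional curvature of $g$) throughout $V_p$.
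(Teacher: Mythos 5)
Your conformal reduction is correct: $R_h(p)=R_g(p)-n(n-1)\geq 0$ and $R^h_{ijkl}(p)=R^g_{ijkl}(p)-(g_{ik}g_{jl}-g_{il}g_{jk})(p)$, so the goal is $R^h_{ijkl}(p)=0$. However, the route you then propose to $R^h_{ijkl}(p)=0$ is not the paper's and has genuine gaps.

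The first missing idea is that the log-Sobolev hypothesis (\ref{LSI_rigidity_2}) on $(V_p,h)$ is used in the paper not through an approximate $\boldsymbol{\nu}\geq-\epsilon\tau_0$ bound, but through Lemma \ref{Ricci_flat_flat}: Davies' semigroup argument turns (\ref{LSI_rigidity_2}) into the heat kernel bound $H(x,t;p,0)\leq(4\pi t)^{-n/2}$, and the first-order coefficient in the small-time expansion then forces $R_h(x)\leq 0$ at \emph{every} point of $V_p$ where $d_g$ is smooth, not merely $R_h(p)\geq 0$. Combined with the conformal identity (the paper's formula (\ref{R_h_Sn})), $R_h\leq 0$ pins down a Laplacian comparison $\Delta_g d_g(p,\cdot)\geq(n-1)\cot d_g(p,\cdot)$, whose second-order Taylor expansion in $g$-normal coordinates yields $Rc_g(p)\leq(n-1)g(p)$. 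Together with $R_g(p)\geq n(n-1)$ this forces $Rc_g(p)=(n-1)g(p)$, $R_g(p)=n(n-1)$, and (since $R_g\geq n(n-1)$ throughout) that $p$ is a minimum of $R_g$, giving $\Delta_g R_g(p)\geq 0$. None of these intermediate facts are available in your set-up because you only extract $R_h(p)\geq 0$ from the conformal formula.

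The second, more serious gap is the final step. Even if your $\epsilon$-Ricci-flow argument could be pushed through, the argument behind Theorem \ref{rigidity_mu} produces $Rc_h(p)=0$, not $|Rm_h|(p)=0$: the integral estimate in the proofs of Theorems \ref{rigidity} and \ref{rigidity_mu} only controls $\int|Rc|^2\,u\,dV$, and the upgrade from Ricci-flatness to flatness is always done afterwards (by Bishop--Gromov in Theorem \ref{rigidity}, by Lemma \ref{Ricci_flat_flat} in Theorem \ref{rigidity_mu}). Your claim that the limiting flow "must be the static Euclidean flow, forcing $R^h_{ijkl}(p)=0$" presupposes precisely what has to be proven. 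The paper instead closes the argument by going one order further in the heat kernel expansion: once one knows $R_h(p)=0$, $Rc_h(p)=0$ and $\Delta_h R_h(p)=\Delta_g R_g(p)\geq 0$ (the latter from the minimum property of $R_g$ at $p$ and a direct computation of $\Delta_h R_h(p)$ in normal coordinates), the $t^2$-coefficient $\phi_2(p,p)=\tfrac{1}{30}\Delta_h R_h+\tfrac{1}{72}R_h^2-\tfrac{1}{180}|Rc_h|^2+\tfrac{1}{180}|Rm_h|^2$ must satisfy $\phi_2(p,p)\leq 0$ by (\ref{heat_kernel_up_best}), and this forces $|Rm_h|(p)=0$. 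This chain of pointwise computations, all driven by Lemma \ref{Ricci_flat_flat}, is what your proposal is missing; the Ricci-flow-with-rescaling strategy is not used at all in this part of the paper, and, as you yourself note, would require a new quantitative control on boundary correction terms that is not established.
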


	Finally, it is worth mentioning  related  rigidity  results  involving scalar curvature.
	The positive mass theorem, first proved by Schoen and Yau \cite{STY}\cite{STY4} for $n\le 7$ and later by Witten \cite{W} using spinors that if $(M^n, g)$ is an asymptotically
	flat manifold with non-negative scalar curvature, $n\le 7$ or $M^n$ is spin, then the ADM mass of $M$ is non-negative and  equal to zero if and only if $M$ is isometric to Euclidean space.
There also exists the analogue of the positive mass theorems for asymptotically hyperbolic manifolds by due to Chru\'{s}ciel and Herzlich \cite{CH}, Chru\'{s}ciel and Nagy \cite{CN}, and Wang \cite{Wangx}.	
	Similar techniques for the positive mass theorem can be used to show that the $n$-dimensional torus $T
^n$
does not admit a metric of positive scalar curvature (cf. \cite{STY1}\cite{STY2}\cite{GL}\cite{GL2}).	
Shi and Tam \cite{ST} proved if $M^3$ has non-negative scalar curvature and the region $\Omega\subset M^3$ such that $\partial \Omega$ is mean convex and has positive Gauss curvature, then the Brown-York mass of $\partial \Omega$ has $m_{BY}(\partial \Omega)\ge 0$ and $m_{BY}(\partial \Omega)=0$ if and only if $\Omega $ is isometric to a region in Euclidean space. They also showed similar result is still true in higher dimensions if  each component of $\partial \Omega$
	can be realized as a strictly convex hypersurface in the Euclidean space and if in addition
	$\Omega$ is spin. 
	  However, the analogous rigidity for the case of $S^n_+$, known as Min-Oo's Conjecture, was disproved by 	Brendle, Marques, and Neves \cite{BMN}. In \cite{MinOo} Min-Oo proved if
	  the metric $g$ on $\mathbb{H}^n$
	  satisfying $R_g\ge -n(n-1)$
	  and $g$ agrees with hyperbolic metric outside a compact metric, then $sec_g\equiv -1$.	  
  In the compact setting there is
	the cover splitting scalar rigidity
	theorem of Bray, Brendle, and Neves \cite{Br3}, the $RP^3$ scalar rigidity theorem
	by Bray, Brendle, Eichmair and Neves \cite{Br2}, and sphere rigidity theorem by Marques and Neves \cite{MN}.
	For more results concerning the rigidity theorems with respect to scalar curvature, one may see \cite{Br1} \cite{BM} \cite{Lichao}\cite{QW}, the survey by Brendle \cite{Brendle}, lecture by Gromov \cite{Gromovlecture} and more references therein.

	The present paper is organized as follows. In section 2 we recall some results which we shall use in the next sections. In section 3 we give the proofs of Theorem \ref{rigidity}. In section 4 we give the proofs of Theorem \ref{rigidity_mu} and Theorem \ref{rigidity_log_sobolev}.
	In section 5 we give the proofs of Theorem \ref{rigidity_Iso_Hn} and Theorem \ref{rigidity_Sn}.
	
	\section{Preliminaries}
	
	In this section we recall some results which we shall use in the next sections. The first of these is a result of Perelman's
	Li-Yau-Hamilton type harnack inequality.
	\begin{thm}\cite{P1}\label{Harnack}
		Suppose $\left(M, g(t))\right|_{0 \leq t \leq T}$ is a complete $n$-dimensional Ricci flow with bounded sectional curvature. Let $u\doteq u(p, T; \cdot,\cdot)$ be the heat kernel of the conjugate heat equation centered at
		$(p,T)$, i.e.
		\begin{equation}\label{con_heat_kernel}
			\begin{cases}
				\square^* u=\left(-\partial_t-\Delta+R\right) u=0\text{\quad on \quad}[0,T], \\
				\lim\limits_{t\nearrow T}u=\delta_{p}.\end{cases}
		\end{equation}
		Define
		$$
		\begin{aligned}
			f &:=-\frac{n}{2} \log \left(4 \pi (T-t)\right)-\log u, \\
			v&:=\left\{(T-t)\left(2 \Delta f-|\nabla f|^2+R\right)+f-n\right\} u.
		\end{aligned}
		$$
		Then we have
		$$
		\left(-\frac{\partial}{\partial t}-\Delta+R\right)\nu\\
		= - 2 (T-t) |Rc+\nabla^2 f-\frac{1}{2(T-t)}g|^2 u
		$$
		and
		$$
		v \leq 0.
		$$
	\end{thm}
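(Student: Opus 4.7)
The plan has two parts: first, verify the evolution identity for $\square^* v$ by direct computation, and second, derive the pointwise bound $v \leq 0$ via a weighted integration argument combined with short-time heat-kernel asymptotics.

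For the evolution identity, write $u = (4\pi\tau)^{-n/2}e^{-f}$; the equation $\square^* u = 0$ then gives the evolution equation $\partial_t f = -\Delta f + |\nabla f|^2 - R + \frac{n}{2\tau}$. Introduce $H := \tau(2\Delta f - |\nabla f|^2 + R) + f - n$ so that $v = Hu$. Using $\nabla u = -u\nabla f$ together with $\square^* u = 0$, a short computation yields $\square^* v = u\bigl(-\partial_t H - \Delta H + 2\langle \nabla f, \nabla H\rangle\bigr)$. Then I would expand each piece of $\mathcal{L}H := -\partial_t H - \Delta H + 2\langle \nabla f, \nabla H\rangle$ using the Bochner formula $\Delta|\nabla f|^2 = 2|\nabla^2 f|^2 + 2\langle \nabla f,\nabla \Delta f\rangle + 2\,Rc(\nabla f,\nabla f)$, the evolution $\partial_t R = \Delta R + 2|Rc|^2$ under the Ricci flow, the commutator $[\partial_t,\Delta]f = 2\langle Rc,\nabla^2 f\rangle$ (coming from $\partial_t g^{ij} = 2R^{ij}$), and the contracted second Bianchi identity $\operatorname{div} Rc = \tfrac{1}{2}\nabla R$. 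The Ricci/Hessian cross-terms then collect into the complete square, yielding the stated identity $\square^* v = -2\tau\,\bigl|Rc + \nabla^2 f - \tfrac{1}{2\tau}g\bigr|^2 u$.

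For the inequality $v \leq 0$, fix $(x_0,t_0)$ with $t_0 < T$ and let $\phi$ be a positive solution of the forward heat equation $\square \phi = (\partial_t - \Delta)\phi = 0$ on $[t_0,T]$ whose initial data at $t_0$ approximates $\delta_{x_0}$. Since $\partial_t\, dvol = -R\, dvol$ along the Ricci flow, integration by parts gives $\frac{d}{dt}\int_M v\phi\, dvol = -\int_M (\square^* v)\phi\, dvol = \int_M 2\tau\bigl|Rc + \nabla^2 f - \tfrac{1}{2\tau}g\bigr|^2 u\phi\, dvol \geq 0$, so $\int_M v\phi\, dvol$ is non-decreasing on $[t_0,T]$. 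Showing $\lim_{t\nearrow T}\int_M v\phi\, dvol = 0$ and letting the initial data of $\phi$ concentrate at $x_0$ then gives $v(x_0,t_0) \leq 0$.

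The main obstacle is precisely this limit $\lim_{t\nearrow T}\int_M v\phi\, dvol = 0$: although $u \to \delta_p$, the terms $\tau\Delta f$ and $\tau|\nabla f|^2$ each carry an $O(1)$ singularity as $\tau \to 0$, so vanishing in the weak sense requires a delicate cancellation. I would handle this via the Minakshisundaram--Pleijel short-time asymptotic expansion of the conjugate heat kernel, which locally compares $u$ to the Euclidean Gaussian (for which $v \equiv 0$ identically), and then estimate the error terms uniformly. The bounded sectional curvature hypothesis enters at exactly this step: it is needed both to justify the global integration by parts via weighted cutoff functions and to secure Gaussian upper bounds on $u$ that make the weak-limit argument rigorous on the complete, possibly noncompact, manifold.
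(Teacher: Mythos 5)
This statement is cited from Perelman \cite{P1} and the paper gives no proof of its own, so there is nothing internal to compare against. Your outline correctly reproduces the standard argument: the pointwise identity $\square^* v = -2\tau\bigl|Rc+\nabla^2 f-\tfrac{1}{2\tau}g\bigr|^2 u$ by direct computation using the Bochner formula, the scalar-curvature evolution, the commutator $[\partial_t,\Delta]$, and the contracted second Bianchi identity; and then $v\le 0$ via the monotonicity of $t\mapsto\int_M v\phi\,d\mathrm{vol}_t$ for a nonnegative forward heat solution $\phi$. The one place you flag as delicate, namely that $\lim_{t\nearrow T}\int_M v\phi\,d\mathrm{vol}_t=0$, is indeed where the real work lies: to leading order one has $f\sim d^2(p,x)/4\tau$ so that $\tau|\nabla f|^2$, $\tau\Delta f$ and $f$ separately blow up but cancel in $H=\tau(2\Delta f-|\nabla f|^2+R)+f-n$, and controlling the error terms for a complete noncompact flow requires the Gaussian bounds and gradient/Laplacian estimates for the conjugate heat kernel that are recorded as Theorems \ref{heat_kernel_estimates} and \ref{gradient_estimates} in this very paper (due to \cite{CTY} and \cite{Ni}); this is also where the bounded-curvature hypothesis and the justification of the global integration by parts enter, as you correctly note. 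Your appeal to the Minakshisundaram--Pleijel expansion is one viable route for the local asymptotics, though the literature (Ni, Chau--Tam--Yu) more typically proceeds via the heat-kernel comparison and gradient estimates directly rather than a full parametrix expansion; either works. In short, your sketch is essentially the standard Perelman argument and contains no genuine gap, only the (explicitly acknowledged) technical work of the limit computation.
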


	We also need the following heat kernel estimates  due to A.Chau, L.Tam, and C. Yu \cite{CTY}; see Corollary 5.2, Proposition 5.1.
	
	\begin{thm}\cite{CTY}\label{heat_kernel_estimates}
		Let $g(t)$, $t\in [0,T]$, be the complete solution to the Ricci flow on an $n$-dimensional manifold $M$ with
		$\sup\limits_{M\times [0,T]}|\nabla^k Rm (g(t))|\le K$ for $0\le k\le 2$.
		Suppose that $ u(y, s; x,t)$ be the heat kernel of the conjugate heat equation (\ref{con_heat_kernel}).
		There exist positive constants $C_1$, $C_2$, $D_1$ and $D_2$ depending on $n, T$ and $K$ such that
		$$
		\begin{gathered}
			\frac{C_1}{\mathrm{Vol}\left(B(y,\sqrt{t-s})\right)} e^{-\frac{d_{g(s)}^2(x, y)}{D_2(t-s)}} \le u(y, s; x,t) \le  \frac{C_2}{\mathrm{Vol}\left(B(y,\sqrt{t-s})\right)} e^{-\frac{d_{g(s)}^2(x, y)}{D_1(t-s)}}.
		\end{gathered}
		$$
		for any $0<s<t<T$.
	\end{thm}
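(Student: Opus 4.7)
The plan is to derive the two-sided Gaussian bound in three stages: (i) reduce the time-dependent Ricci flow background to a problem with uniformly equivalent metrics; (ii) obtain the upper bound through Perelman's differential Harnack (Theorem \ref{Harnack}) combined with the reduced distance; (iii) obtain the on-diagonal lower bound by mass conservation, and propagate it off-diagonal by a classical parabolic Harnack inequality whose hypotheses are supplied by bounded curvature.

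\textbf{Step 1 (uniform background).} Since $|\nabla^k Rm|\le K$ for $k\le 2$, integrating $\partial_t g=-2Rc$ with $|Rc|\le C(n)K$ yields $e^{-2C(n)KT}g(s_1)\le g(s_2)\le e^{2C(n)KT}g(s_1)$ for all $s_1,s_2\in[0,T]$, so the distances $d_{g(s)}$ and the volumes $\mathrm{Vol}_{g(s)}$ at different times are mutually comparable with constants depending only on $n,K,T$. In harmonic coordinates for $g(0)$, the derivative bounds on $Rm$ give $C^1$ control on the coefficients of $\Delta_{g(s)}$ uniformly in $s$, and $|R|\le n(n-1)K$. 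Thus the conjugate heat equation becomes a uniformly parabolic PDE with bounded zero- and first-order coefficients on a manifold enjoying uniform volume doubling and an $L^2$ Poincar\'e inequality on scales $\le\sqrt{T}$.

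\textbf{Step 2 (upper bound).} Apply Theorem \ref{Harnack} with $T$ replaced by $t$ to the quantity
$$
v=\bigl\{(t-s)(2\Delta f-|\nabla f|^2+R)+f-n\bigr\}u\le 0,\qquad f=-\tfrac{n}{2}\log(4\pi(t-s))-\log u,
$$
and integrate along Perelman's $\mathcal L$-geodesics to obtain the pointwise bound $u(y,s;x,t)\le(4\pi(t-s))^{-n/2}e^{-\ell(y,t-s)}$, where $\ell$ is the reduced distance based at $(x,t)$. Under $|Rm|\le K$ the defining $\mathcal L$-length integral admits the comparison $\ell(y,t-s)\ge c\,d_{g(s)}^2(x,y)/(t-s)-C$ with $c,C=c,C(n,K,T)$. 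Combining this with the Bishop--Gromov lower bound $(t-s)^{n/2}\ge c\,\mathrm{Vol}_{g(s)}(B(y,\sqrt{t-s}))$ (available under $|Rc|\le C(n)K$) produces the claimed Gaussian upper bound.

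\textbf{Step 3 (lower bound).} The conjugate heat kernel preserves mass, $\int_M u(\cdot,s;x,t)\,dvol_{g(s)}=1$, and the Gaussian upper bound from Step 2 forces a definite fraction of this mass to sit in a ball of radius $R\sqrt{t-s}$ centered at $x$ (for some fixed $R=R(n,K,T)$), so averaging yields a point $y^\ast$ in that ball with $u(y^\ast,s;x,t)\ge c/\mathrm{Vol}_{g(s)}(B(x,\sqrt{t-s}))$; a short-time parabolic Harnack step then transports this to the on-diagonal estimate $u(x,s-\varepsilon;x,t)\ge c/\mathrm{Vol}_{g(s)}(B(x,\sqrt{t-s}))$. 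For off-diagonal propagation, observe that by Step 1 the operator $\partial_s-\Delta_{g(s)}+R(g(s))$ satisfies Saloff-Coste's scale-invariant parabolic Harnack (the bounded potential $R$ is absorbed by an exponential substitution). Chaining Harnack along a $g(s)$-shortest geodesic from $x$ to $y$ through $N\sim d_{g(s)}^2(x,y)/(t-s)$ overlapping parabolic cylinders produces a multiplicative loss $e^{-cN}$, which is exactly the required Gaussian lower bound.

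The principal obstacle is the time dependence of the background metric: the classical Davies--Aronson--Saloff-Coste Gaussian estimates are formulated for a fixed Riemannian structure, whereas here both $\Delta_{g(s)}$ and the potential $R(g(s))$ evolve. This is resolved by Step 1, in which bounded $|\nabla^k Rm|$ for $k\le 2$ freezes the metric at $C^2$ scale up to multiplicative constants depending on $(n,K,T)$, so that $g(0)$ can serve as a fixed reference and the problem becomes a standard uniformly parabolic PDE with bounded coefficients on a space of bounded geometry, to which Moser iteration and Saloff-Coste's Harnack apply off the shelf.
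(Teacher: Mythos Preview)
The paper does not prove this statement itself; it is quoted from Chau--Tam--Yu \cite{CTY}. Nonetheless, your Step~2 contains a genuine sign error. Writing $\tau=t-s$ and using $\partial_\tau f=\Delta f-|\nabla f|^2+R-\tfrac{n}{2\tau}$, the inequality $v\le 0$ of Theorem~\ref{Harnack} rearranges to $2\tau\,\partial_\tau f+\tau|\nabla f|^2-\tau R+f\le 0$. Along a curve $\gamma(\tau)$ with $\gamma(0)=x$ this gives
\[
\frac{d}{d\tau}\bigl(2\sqrt{\tau}\,f(\gamma(\tau),\tau)\bigr)\le \sqrt{\tau}\bigl(R+|\gamma'|^2\bigr),
\]
hence $2\sqrt{\tau}f\le\mathcal L(\gamma)$ and, after minimising over curves, $f\le\ell$. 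That is, $u\ge (4\pi\tau)^{-n/2}e^{-\ell}$: Perelman's Harnack delivers the Gaussian \emph{lower} bound (once $\ell$ is bounded above by $Cd^2/\tau+C'$ via the $g(s)$-geodesic under $|Rm|\le K$), not the upper bound. Your claimed inequality $u\le (4\pi\tau)^{-n/2}e^{-\ell}$ goes the wrong way.

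With Step~2 corrected to give the lower bound, your Step~3 becomes redundant and there is no remaining argument for the upper bound. In \cite{CTY} the upper bound is obtained by a different mechanism: the bounded-geometry hypotheses (this is where the control on $\nabla^k Rm$ for $k\le 2$ is actually used) yield an on-diagonal estimate $u\le C\tau^{-n/2}$, and the off-diagonal Gaussian decay is then produced by a weighted-energy / integrated maximum-principle argument, not by integrating a differential Harnack along paths.
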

	
	Finally we need the following gradient estimates for the positive solutions
	to the conjugate heat equations by Lei Ni \cite{Ni} and also see Lemma 6.3 in \cite{CTY}.
	\begin{thm}\cite{Ni}\cite{CTY}\label{gradient_estimates}
		Let $g(t)$, $t\in [0,T]$, be the complete solution to the Ricci flow on an $n$-dimensional manifold $M$ with
		$\sup\limits_{M\times [0,T]}|\nabla^k Rm (g(t))|\le K$ for $0\le k\le 2$.	
		Suppose $u>0$ is a solution of
		$$
		\square^* u=\left(-\partial_t-\Delta+R\right) u=0,
		$$
		and $u \leq A$  on $M\times [0,T]$. Then
		$$
		(T-t) \frac{|\nabla u|^2}{u} \leq C\left[u \log \frac{A}{u}+u\right]
		$$
		for some constant $C$ depending only on $T, n$ and $K$.
	\end{thm}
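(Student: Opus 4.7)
The plan is a Bernstein-type maximum-principle argument, adapted to the Ricci flow and the conjugate heat equation along the lines of Li--Yau and Ni. By linearity of $\square^*$ I first replace $u$ by $u/A$ and reduce to the case $0 < u \le 1$. Setting $h := -\log u \ge 0$, one computes from $\partial_t u = -\Delta u + Ru$ and $\Delta u/u = -\Delta h + |\nabla h|^2$ that
\[
\partial_t h = -\Delta h + |\nabla h|^2 - R, \qquad \frac{|\nabla u|^2}{u^2} = |\nabla h|^2,
\]
so the claim becomes the pointwise estimate $(T-t)|\nabla h|^2 \le C(1+h)$, or equivalently, the bound $G := (T-t)|\nabla h|^2/(1+h) \le C(n,T,K)$.

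I would compute $(\partial_t - \Delta) G$ using the Bochner formula
\[
\Delta |\nabla h|^2 = 2|\nabla^2 h|^2 + 2\langle \nabla h, \nabla \Delta h\rangle + 2\operatorname{Rc}(\nabla h, \nabla h),
\]
the Ricci-flow relation $\partial_t g^{ij} = 2R^{ij}$ (which contributes an extra $2\operatorname{Rc}(\nabla h, \nabla h)$ to $\partial_t |\nabla h|^2$), and the equation for $h$. The crucial ingredient is the $-2|\nabla^2 h|^2$ term, which via Kato's inequality $|\nabla^2 h|^2 \ge |\nabla |\nabla h|^2|^2/(4|\nabla h|^2)$ and the critical-point condition $\nabla G = 0$ becomes a negative multiple of $G^2/(T-t)$. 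After careful bookkeeping one expects, at interior maxima, a differential inequality of the form
\[
(\partial_t - \Delta) G \le -c\,\frac{G^2}{T-t} + CK(G+1),
\]
with $c = c(n) > 0$ and $C = C(n, T, K)$; the bad terms are controlled using $|\nabla^k \operatorname{Rm}| \le K$ for $0 \le k \le 2$.

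Because $M$ may be noncompact, I localize with a smooth cutoff $\eta$ supported in a $g(0)$-geodesic ball $B(x_0, r)$, identically one on $B(x_0, r/2)$, satisfying $|\nabla \eta|^2/\eta \le C_0/r^2$ and $\Delta \eta \ge -C_0/r^2$. Such $\eta$ exists via Laplacian comparison, and the bounded curvature hypothesis $|\operatorname{Rm}| \le K$ makes the metrics $g(t)$ uniformly equivalent on $[0,T]$, so the same bounds on $\eta$ hold with respect to every $g(t)$. Applying the parabolic maximum principle to $\eta G$ on $B(x_0, r) \times [0, T]$ at its interior maximum and sending $r \to \infty$ produces the bound $G \le C(n, T, K)$, which unravels to the claimed estimate upon substituting $h = \log(A/u)$.

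The principal difficulty is producing the correct sign in the evolution inequality: the negative quadratic must absorb the Ricci contributions from Bochner and from $\partial_t g$ (of order $KG$), a $\langle \nabla R, \nabla h\rangle$ term whose control requires $|\nabla \operatorname{Rm}| \le K$, and the cross terms introduced by the nonlinear source $+|\nabla h|^2$ in $\partial_t h$ and by the denominator $1+h$ (of the form $|\nabla h|^4/(1+h)^2$ and $\langle \nabla|\nabla h|^2, \nabla h\rangle/(1+h)$). Dispatching these via Young's inequality at the maximum point of $\eta G$, together with cutoff errors of size $O(r^{-2})$ that vanish as $r \to \infty$, so that the leading quadratic in $G$ genuinely dominates for $G$ large, is the technical heart of the argument.
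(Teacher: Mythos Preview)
The paper does not actually prove this statement: it is quoted in the Preliminaries section as a known result of Ni and Chau--Tam--Yu, with no argument reproduced, and is simply invoked later (for instance to obtain (\ref{conclude})). So there is no ``paper's own proof'' to compare against. Your outline is the standard Li--Yau/Hamilton Bernstein-type argument that those references carry out, and it is sound in structure.

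One correction worth flagging: since $u$ solves the \emph{conjugate} heat equation, the natural forward time is $\tau = T-t$, in which $h = -\log u$ satisfies $(\partial_\tau - \Delta)h = -|\nabla h|^2 + R$. Your displayed equation $\partial_t h = -\Delta h + |\nabla h|^2 - R$ is algebraically correct, but the parabolic operator you should apply to $G$ is $(\partial_\tau - \Delta)$ (equivalently $-\partial_t - \Delta$), not $(\partial_t - \Delta)$ as written. With that sign fixed, the scheme goes through as you describe: the $-2|\nabla^2 h|^2$ from Bochner, combined with the critical-point relation $\nabla G = 0$, yields the dominating $-cG^2/\tau$; the Ricci and $\nabla R$ contributions are $O(K)\cdot G$ and absorbed by Young; the cross term $\langle \nabla|\nabla h|^2,\nabla h\rangle/(1+h)$ is handled at the maximum via $\nabla G = 0$; and the cutoff localization (or Omori--Yau, which the bounded-curvature hypothesis permits) disposes of noncompactness. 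This is essentially how Ni and Chau--Tam--Yu argue, so your proposal matches the cited proofs.
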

	
	Given an open domain in a Riemannian manifold where curvature and injectivity radius is controlled, the following  result of Hochard  that allows us to "push the boundary to infinity" by a conformal modification of the metric in a neighborhood of the boundary, while keeping curvature and injectivity radius controlled.
	
	\begin{thm} [Corollaire IV.1.2 in \cite{Ho}]\label{Ho}
		Let $U$ be an open domain in a Riemannian manifold $(M, g)$ such that for every $x \in U$, $B(x, 1)$ is relatively compact in $U$, $$\operatorname{inj}_{\mathrm{x}} g \geq 1$$ and $$|\operatorname{Rm} (g(x))| \leq 1.$$ Then there exists a domain $(U)_1 \subset \tilde{U} \subset U$ such that for any $k>C(n)$ one can produce a Riemannian metric $h_k$ on $\tilde{U}$ with
		$\left(\tilde{U}, h_k\right)$ is a complete Riemannian manifold,
		$$
		h_k \equiv g \text { on }(\tilde{U})_{\frac{C(n)}{\sqrt{k}}} \text {, }
		$$
		$$\left|\operatorname{Rm} (h_k(x))\right| \leq k$$ and $$\operatorname{inj}_{\mathrm{x}} h_k \geq \frac{1}{\sqrt{k}}$$ for $x \in \tilde{U},$
		where	$(U)_r=\{x \in U \mid B(x, r) \text { is relatively compact in } U\}$
	\end{thm}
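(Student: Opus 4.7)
The plan is to construct $h_k$ as a conformal deformation $h_k = e^{2\phi_k} g$ on a suitable intermediate domain $\tilde U$, with $\phi_k \geq 0$ a smooth function vanishing on the bulk $(\tilde U)_{C(n)/\sqrt k}$ and diverging along $\partial \tilde U$ at a rate chosen to make the modified metric complete while simultaneously bounding the curvature by $k$ and the injectivity radius below by $1/\sqrt k$.

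For the domain, I would take $\tilde U \subset U$ to be a slight enlargement of $(U)_1$ of width $\alpha := C(n)/\sqrt k$. Set $\rho(x) = \mathrm{dist}_g(x, M \setminus \tilde U)$, and using the uniform bounds $|\mathrm{Rm}(g)| \leq 1$ and $\mathrm{inj}_x g \geq 1$ on $U$, mollify $\rho$ in local exponential coordinates of uniform size to produce a smooth $\tilde\rho$ with $\tfrac12 \rho \leq \tilde\rho \leq \rho$ and $|\nabla \tilde\rho|_g + |\nabla^2 \tilde\rho|_g \leq C(n)$. Pick a smooth profile $F_k \colon (0, \infty) \to [0, \infty)$ that vanishes on $[\alpha/4, \infty)$, equals $-2 \log(2t/\alpha)$ near $t = 0$, and is monotonically interpolated on $(0, \alpha/4)$, so that $|F_k'(t)| + |t F_k''(t)| \lesssim 1/t$. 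Set $\phi_k = F_k \circ \tilde\rho$ and $h_k = e^{2\phi_k} g$ on $\tilde U$.

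Three of the conclusions are then straightforward. Agreement: on $(\tilde U)_\alpha$ one has $\rho \geq \alpha$, hence $\tilde\rho \geq \alpha/2 \geq \alpha/4$, so $\phi_k = 0$ and $h_k = g$. Completeness: any path approaching $\partial \tilde U$ has $h_k$-length at least (up to the factor $|\nabla \tilde\rho| \leq 2$) equal to $\int_0^{\alpha/4} e^{F_k(s)}\, ds \asymp \int_0^{\alpha/4} (\alpha/s)^2\, ds = \infty$, and completeness of the interior is inherited from the hypothesis that unit $g$-balls are relatively compact, so $(\tilde U, h_k)$ is complete. Curvature: the standard conformal-change formula $|\mathrm{Rm}(e^{2\phi} g)|_{e^{2\phi}g} \leq e^{-2\phi}\bigl(|\mathrm{Rm}(g)|_g + C(n)(|\nabla^2 \phi|_g + |\nabla \phi|_g^2)\bigr)$ combined with the bounds on $F_k$ and $\tilde\rho$ gives a dominant term $e^{-2 F_k(t)} (F_k'(t))^2 \lesssim (t/\alpha)^4 \cdot 1/t^2 = t^2/\alpha^4$, which is maximized at $t \sim \alpha/4$ and thus $\lesssim 1/\alpha^2 \lesssim k$.

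The main subtle step is the injectivity-radius bound $\mathrm{inj}_{x_0} h_k \geq 1/\sqrt k$ on the collar $\{0 < \tilde\rho < \alpha/4\}$. Here I would invoke Cheeger's injectivity lemma, so that in view of the curvature bound $|\mathrm{Rm}(h_k)| \leq k$ it suffices to establish volume non-collapsing at scale $r_k := 1/\sqrt k$. At a point $x_0$ with $\tilde\rho(x_0) = t_0$, the gradient bound $|\nabla \phi_k|_g \lesssim 1/t_0$ makes $e^{2\phi_k}$ uniformly $C^0$-comparable to the constant $e^{2\phi_k(x_0)}$ on the $g$-ball of $g$-radius $r_k e^{-\phi_k(x_0)}$ (which is $\ll t_0$ once $C(n)$ is large), so standard volume comparison inside $(B_g(x_0, 1), g)$ produces $\mathrm{Vol}_{h_k}(B_{h_k}(x_0, r_k)) \asymp e^{n \phi_k(x_0)} (r_k e^{-\phi_k(x_0)})^n = r_k^n$, exactly the non-collapsed rate. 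Choosing $C(n)$ large enough to absorb all accumulated universal constants then yields $\mathrm{inj}_{x_0} h_k \geq 1/\sqrt k$ and completes the construction.
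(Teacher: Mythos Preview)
Your approach is essentially the same as the one the paper quotes from Hochard (see the Remark immediately following the theorem): a conformal modification $h_k=e^{2(\text{profile}\circ\rho)}g$ near the boundary, with $\rho$ a smoothed distance-type function and a profile that vanishes in the interior and blows up toward the boundary. The only cosmetic difference is the choice of profile---Hochard uses $f(x)=-\log\bigl(1-((x-1+\epsilon)/\epsilon)^2\bigr)$ on $(1-\epsilon,1)$ whereas you use $F_k(t)\sim -2\log(2t/\alpha)$ near $t=0$---but both give the same qualitative control (divergent length integral for completeness, $e^{-2\phi}|\nabla\phi|^2\lesssim k$ for the curvature bound, and local $C^0$-comparability of the conformal factor at scale $r_k$ for the Cheeger/volume injectivity argument), so the argument is the same.
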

	\begin{rem}\label{remark}
		With little more observations to Hochard's proofs of Theorem \ref{Ho}, we can also  make the metric $h_k$ constructed in Theorem \ref{Ho} such that for any $m\ge 0$
		$$\left|\nabla^m\operatorname{Rm} (h_k(x))\right| \leq K,$$ where
		$K$ is a constant only depending on $m$, $\sup\limits_{U}|\nabla^m Rm (g)|$.
		Actually Hochard define a metric $h_k$ on $U$, under the form $h_k=e^{2 f \circ \rho} g$,  where $\rho$ is a smooth 'distance-like' function defined in \cite{Ho} and $f:\left[0,1\left[\rightarrow \mathbb{R}_{+}\right.\right.$is a smooth function that has to be chosen
		as
		$$
		f(x)= \begin{cases}0 & \text { for } 0 \leq x \leq 1-\epsilon, \\ -\log \left(1-\left(\frac{x-1+\epsilon}{\epsilon}\right)^2\right) & \text { for } 1-\epsilon<x<1.\end{cases}
		$$
		The the sectional curvature is change as the following under conformal transformation (see for example \cite{Be}, page 58)
		$$
		\mathrm{K}_{\mathrm{h}}(\sigma)=\left(\mathrm{K}_{\mathrm{g}}(\sigma)-\left.\operatorname{tr} \nabla^2(f \circ \rho)\right|_\sigma+\left.|d(f \circ \rho)|_\sigma\right|^2-|d(f \circ \rho)|^2\right) e^{-2 f \circ \rho} .
		$$
		We only need modify $f$ little as
		$$
		f(x)= \begin{cases}0 & \text { for } 0 \leq x \leq 1-\epsilon, \\ -\textbf{m}\log \left(1-\left(\frac{x-1+\epsilon}{\epsilon}\right)^2\right) & \text { for } 1-\epsilon<x<1,\end{cases}
		$$
		and notice that $f^{(m)}e^{-2f}\le c_{m,\epsilon}$, where $c_{m,\epsilon}$ is a constant only depending on $m$ and $\epsilon$. The rest of the proof is
		the same as in \cite{Ho} and we omit the details and leave it to the readers.
	\end{rem}

We need the following lemma which due to Perelman \cite{P1}.
	
	\begin{lem}\label{key_1}
		Let $\left\{\left(M, g(t)\right), 0 \leq t \leq T \right\}$ be a complete Ricci flow solution.
		Let $u(x,t)\doteq u(p, T; x,t)$ be the heat kernel of the conjugate heat equation (\ref{con_heat_kernel}) centered at
		$(p,T)$
		and
		$h:M\to [0,1]$ be a time-independently smooth cut-off function satisfying
		$$
		h(x)= \begin{cases}1, & \forall x \in \Omega_{0}^{\prime} ; \\ 0, & \forall x \in M \backslash \Omega_{0} .\end{cases}
		$$
		with $\Omega_0'\subset \Omega_{0}\subset M$.
		Denote 	$S:=\left.\int_{M} u hd V_t\right|_{t=0} $, $u=\left(4\pi(T-t)\right)^{-\frac{n}{2}}e^{-f}$, $v=\left\{(T-t)\left(2 \Delta f-|\nabla f|^2+R\right)+f-n\right\} u$ and  Perelman's  $\mathcal{W}$ entropy $$ \mathcal{W}(\Omega, g, \varphi, \tau)
		=-n-\frac{n}{2} \log (4 \pi \tau)+\int_{\Omega}\left\{\tau\left(R \varphi^2+4|\nabla \varphi|^2\right)-2 \varphi^2 \log \varphi \right\} d vol_{g},$$
		where $\varphi\in \mathcal{S}(V)=\left\{\varphi \mid \varphi \in W_0^{1,2}(V),  \varphi \geq 0,  \int_{V} \varphi^2 d vol_g=1\right\}$. 
		Then we have
		$$
		\left. \int_{M} v h d V_t \right|_{t=0}
		\ge 	 \mathcal{W}(M, g(0), \tilde{u}^{\frac{1}{2}}(0),T)S-\left. \int_{M\backslash \Omega'_{0}}\left\{4 T|\nabla \sqrt{h}|^2-h \log h\right\} ud V_t\right|_{t=0},
		$$
		where  $\tilde{u}=\frac{u h}{S}$.
		
	\end{lem}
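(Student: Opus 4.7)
The plan is to establish an exact identity between $\int_M vh\,dV_0$ and $S\,\mathcal{W}(M,g(0),\tilde u^{1/2}(0),T)$ up to error terms coming from the cutoff, and then use $S\le 1$ to turn the identity into the desired inequality. The starting point is the relationship $u=(4\pi T)^{-n/2}e^{-f}$ at $t=0$, which gives $\nabla u = -u\nabla f$ and $\log u = -\tfrac n2\log(4\pi T) - f$. These two algebraic identities are what let the Perelman integrand and the $\mathcal{W}$-integrand be written in a common form.

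First I would expand $\int_M vh\,dV_0$ directly from the definition of $v$. The only nontrivial term is $2T\int uh\,\Delta f\,dV_0$; integrating by parts and using $\nabla u = -u\nabla f$ converts it into $2T\int uh|\nabla f|^2\,dV_0 - 2T\int u\langle\nabla h,\nabla f\rangle\,dV_0$, so
\begin{equation*}
\int_M vh\,dV_0 = T\!\int_M uh(R+|\nabla f|^2)\,dV_0 - 2T\!\int_M u\langle\nabla f,\nabla h\rangle\,dV_0 + \int_M uhf\,dV_0 - nS.
\end{equation*}
Next I would expand $S\cdot\mathcal{W}(M,g(0),\tilde u^{1/2},T)$ by writing it in the form $\int[T(R\tilde u + |\nabla\tilde u|^2/\tilde u) - \tilde u\log\tilde u]\,dV_0 - n - \tfrac n2\log(4\pi T)$ and substituting $\tilde u = uh/S$. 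Using $|\nabla(uh)|^2 = h^2|\nabla u|^2 + 2uh\langle\nabla u,\nabla h\rangle + u^2|\nabla h|^2$, the identity $|\nabla u|^2/u = u|\nabla f|^2$, and $\log\tilde u = \log u + \log h - \log S$, most terms match those in $\int vh\,dV_0$. Cancellation leaves precisely
\begin{equation*}
\int_M vh\,dV_0 \;=\; S\,\mathcal{W}(M,g(0),\tilde u^{1/2},T) \;-\; T\!\int_M \frac{u|\nabla h|^2}{h}\,dV_0 \;+\; \int_M uh\log h\,dV_0 \;-\; S\log S.
\end{equation*}

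To finish, I would use three observations. First, $|\nabla h|^2/h = 4|\nabla\sqrt h|^2$, which rewrites the gradient-error term in the desired form. Second, both $|\nabla\sqrt h|^2$ and $h\log h$ vanish identically on $\Omega_0'$ (where $h\equiv 1$) and on $M\setminus\Omega_0$ (where $h\equiv 0$, interpreting $h\log h=0$), so their integrals reduce to integrals over $M\setminus\Omega_0'$. Third, since $u$ is the conjugate heat kernel, $\int_M u\,dV_0 = 1$ and hence $S = \int_M uh\,dV_0 \in (0,1]$, which gives $-S\log S \ge 0$; dropping this nonnegative term converts the identity into the stated inequality.

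The main obstacle is purely bookkeeping: lining up the scaling factors of $S$ between the $\mathcal{W}$-integrand (which sees the normalized density $\tilde u$) and the $v$-integrand (which sees the un-normalized $u$), and keeping track of the signs when integration by parts is used on $\Delta f$ in a non-complete domain where $h$ carries all the boundary behavior. Once the identity is written out cleanly, the inequality is just the observation that $S\log S \le 0$ for $S\in(0,1]$; there is no analytic difficulty, and no need to differentiate in $t$ since the statement is evaluated at $t=0$.
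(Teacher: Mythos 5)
Your proof is correct and follows essentially the same strategy as the paper: both expand $\mathcal{W}(M,g(0),\tilde u^{1/2}(0),T)$ with $\tilde u = uh/S$, perform one integration by parts that pairs $\nabla u = -u\nabla f$ against $\nabla h$, identify the leftover cutoff error as $4T|\nabla\sqrt h|^2 - h\log h$ (supported on $M\setminus\Omega_0'$), and then discard the term $S\log S\le 0$ coming from $S=\int_M uh\,dV_0\le\int_M u\,dV_0=1$. The only cosmetic difference is that the paper works in the $\tilde f$-variable ($\tilde f = f-\log h+\log S$) with the $2\Delta\tilde f$ form of the entropy integrand and integrates $\Delta\log h$ by parts, whereas you work in the $\tilde u$-variable with the $|\nabla\tilde u|^2/\tilde u$ form and integrate $\Delta f$ by parts; the two bookkeepings land on the same identity.
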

	\begin{proof}
		Noted that  $\left.\int_M \tilde{u} d V_t \right|_{t=0}=1$ and $\tilde{u}$ is supported on $\Omega_{0}$ at $t=0$. Denote
		$\tilde{f} =-\log \tilde{u}-\frac{n}{2} \log \left(4 \pi (T-t)\right)=f-\log h+\log S		$.	We obtain
		\begin{equation*}\label{key2}
			\begin{aligned}
				&\ \ \ \ \mathcal{W}(M, g(0), \tilde{u}^{\frac{1}{2}}(0), T)\\
				&=\left. \int_{M}\left\{T\left(R+2 \Delta \tilde{f}-|\nabla \tilde{f}|^2\right)+\tilde{f}-n\right\} \tilde{u}d V_t \right|_{t=0} \\
				&=\left\{\log S+\frac{1}{S} \left.\int_{M} v h  d V_t \right|_{t=0}\right\} +\left.\frac{1}{S} \int_{M}\left\{T(-2\Delta \log h-|\nabla \log h|^2+2\langle \nabla f,\nabla \log h\rangle)-\log h  \right\} uh d V_t \right|_{t=0} \\
				&=\left\{\log S+\frac{1}{S} \left.\int_{M} v h  d V_t \right|_{t=0}\right\} +\left.\frac{1}{S} \int_{M}\left\{4 T|\nabla \sqrt{h}|^2-h \log h\right\} u d V_t \right|_{t=0} \\
				&\le \frac{1}{S} \left.\int_{M} v h  d V_t \right|_{t=0} + \left.\frac{1}{S} \int_{M\backslash \Omega'_{0}}\left\{4 T|\nabla \sqrt{h}|^2-h \log h\right\} u d V_t\right|_{t=0} ,
			\end{aligned}
		\end{equation*}
		where we used $h\equiv 1$ on $\Omega'_{0}$ and $S=\left.\int_M uhd V_t\right|_{t=0}\le\left. \int_M ud V_t\right|_{t=0} =1$ in the above inequalities.
	\end{proof}

	The following lemma is just a local version of an estimate by R.Bamler \cite{RB3}, we present the proof below for sake of convenience for the readers.
	\begin{thm}\cite{RB3}\label{key_2}
		Let $\left\{\left(M, g(t)\right), 0 \leq t \leq T \right\}$ be a complete Ricci flow solution.
		Let $u(x,t)\doteq u(p, T; x,t)$ be the heat kernel of the conjugate heat equation (\ref{con_heat_kernel}) centered at
		$(p,T)$
		and
		$h:M\to [0,1]$ be a time-independently smooth cut-off function satisfying
		$$
		h(x)= \begin{cases}1, & \forall x \in \Omega_{0}^{\prime} ; \\ 0, & \forall x \in M \backslash \Omega_{0} .\end{cases}
		$$
		with $\Omega_0'\subset \Omega_{0}\subset M$. Then			
		we have for any $b>0$ the following estimate holds
		\begin{equation}\label{key_2_main}
			\begin{aligned}
				\frac{d}{d t} \int_M \tau R h u d V_t \geq & - b  \int_M \tau\left(|\operatorname{Rc}|^2+(2n+1)|\nabla f|^4+2n^2\left|\nabla^2 f\right|^2\right) h u d V_t  \\
				& -3b^{-1}  \int_M \tau\left|\operatorname{Rc}+\nabla^2 f-\frac{1}{2 \tau} g\right|^2 h u d V_t \\
				& -  \int_M \tau^{-1}\left(\tau\left(-|\nabla f|^2+\triangle f\right)+f-\frac{n}{2}\right) \left(f-\frac{n}{2}\right) h u d V_t \\
				& -\tau \int_M\left( 2\langle \nabla R,\nabla h \rangle+R\Delta h\right) u d V_t +2 \tau\int_M \operatorname{Rc}(\nabla f, \nabla h)u d V_t \\
				& -\int_M\left(-\tau\left(|\nabla f|^2+R\right)+f \right)\langle \nabla f, \nabla h \rangle u d V_t,
			\end{aligned}
		\end{equation}
		where $\tau=T-t$ and $	f=-\frac{n}{2} \log \left(4 \pi \tau\right)-\log u$.
	\end{thm}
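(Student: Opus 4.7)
The plan is to differentiate $\int_M \tau R h u\,dV_t$ directly, use the Ricci-flow evolution equations to rewrite the pointwise derivative, and then reorganize via integration by parts, Perelman's Harnack identity from Theorem \ref{Harnack}, and Young's inequality with parameter $b$. This is a local version of Bamler's computation in \cite{RB3}, so the novelty lies in carefully tracking the cut-off $h$, which is time-independent and therefore contributes only ``boundary'' terms through $\nabla h$ and $\Delta h$.

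First I would compute the pointwise time derivative using $\partial_t\tau=-1$, $\partial_t R=\Delta R+2|\operatorname{Rc}|^2$, $\partial_t(dV_t)=-R\,dV_t$, and the conjugate heat equation $\partial_t u=-\Delta u+Ru$, which yields
\begin{align*}
\partial_t(\tau R h u\,dV_t)=\bigl(-Rhu+\tau(\Delta R)hu+2\tau|\operatorname{Rc}|^2 hu-\tau R h\Delta u\bigr)dV_t.
\end{align*}
Integration by parts (moving the Laplacian from $R$ onto $hu$, using $\nabla u=-u\nabla f$) converts the combination $\tau(\Delta R)hu-\tau Rh\Delta u$ into $-\tau u\bigl[R\Delta h+2\langle\nabla R,\nabla h\rangle\bigr]$, producing the fourth line of \eqref{key_2_main} together with the $2\tau\operatorname{Rc}(\nabla f,\nabla h)u$ term, which arises by further rewriting $\langle\nabla R,\nabla h\rangle$ using the contracted Bianchi identity $\nabla R=2\operatorname{div}\operatorname{Rc}$ and integrating by parts one more time with $\nabla u=-u\nabla f$. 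The fifth (final) line of \eqref{key_2_main} comes from a parallel rewriting of a piece of the bulk contribution: writing $F=\tau(2\Delta f-|\nabla f|^2+R)+f-n$ so that $v=Fu$, and then integrating the resulting $\langle\nabla F,\nabla h\rangle u$ term by parts.

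To close the estimate I would couple the bulk $2\tau\int|\operatorname{Rc}|^2 hu\,dV_t$ to the Hamilton--Perelman traceless tensor via Theorem \ref{Harnack}, which gives $\square^* v=-2\tau\bigl|\operatorname{Rc}+\nabla^2 f-\frac{1}{2\tau}g\bigr|^2 u$. Expanding the square and solving for $|\operatorname{Rc}|^2$ produces cross terms in $\langle\operatorname{Rc},\nabla^2 f\rangle$, $|\nabla^2 f|^2$, and scalar pieces involving $R,\Delta f$; Young's inequality $2\langle A,B\rangle\le b|A|^2+b^{-1}|B|^2$ then distributes each cross term into the $-b\int\tau\bigl(|\operatorname{Rc}|^2+(2n+1)|\nabla f|^4+2n^2|\nabla^2 f|^2\bigr)hu$ and $-3b^{-1}\int\tau\bigl|\operatorname{Rc}+\nabla^2 f-\frac{1}{2\tau}g\bigr|^2hu$ pieces of the stated inequality. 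The remaining scalar terms assemble into the third line of \eqref{key_2_main} by recognizing the natural factorization $\tau^{-1}\bigl[\tau(-|\nabla f|^2+\Delta f)+f-\frac{n}{2}\bigr]\bigl(f-\frac{n}{2}\bigr)$. The main obstacle will be pinning down the precise dimensional constants $(2n+1)$ and $2n^2$: they come from bounding the traces $\operatorname{tr}(\operatorname{Rc})=R$ and $\operatorname{tr}(\nabla^2 f)=\Delta f$ by Cauchy--Schwarz with the usual factor of $n$, and from absorbing the quartic gradient term $|\nabla f|^4$ that emerges from the identity $\partial_t f=\tfrac{n}{2\tau}+|\nabla f|^2-\Delta f-R$. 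Fortunately, because $h$ is time-independent, none of the cut-off manipulations contaminate the bulk estimate, so the argument really does reduce to inserting $h$ into Bamler's original computation and reading off the boundary terms.
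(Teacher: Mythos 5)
Your proposal takes essentially the same approach as the paper: differentiate under the integral using the conjugate heat equation and $\partial_t R = \Delta R + 2|\operatorname{Rc}|^2$, integrate by parts to isolate the $\nabla h$ and $\Delta h$ cutoff terms, decompose around the traceless tensor $\operatorname{Rc}+\nabla^2 f-\frac{1}{2\tau}g$, and absorb the cross terms via Young's inequality with parameter $b$. A minor correction to the details of the route: the paper never actually invokes the PDE $\square^* v = -2\tau|\cdot|^2 u$ from Theorem \ref{Harnack} --- it simply writes $2\tau|\operatorname{Rc}|^2 - R = 2\tau\operatorname{Rc}\cdot\bigl(\operatorname{Rc}+\nabla^2 f - \frac{1}{2\tau}g\bigr) - 2\tau\operatorname{Rc}\cdot\nabla^2 f$ algebraically --- and the term $2\tau\int\operatorname{Rc}(\nabla f,\nabla h)u\,dV_t$ arises from integrating $\operatorname{Rc}\cdot\nabla^2 f\,hu$ by parts (with $\nabla u=-u\nabla f$), not from applying the Bianchi identity to $\langle\nabla R,\nabla h\rangle$.
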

	\begin{proof}
		We calculate that
		\begin{equation}\label{p_1}
			\begin{aligned}
				&\ \ \ \	\tau  \frac{d}{d t} \int_M \tau R h u d V_t \\
				& =\tau \int_M u\left(\frac{\partial}{\partial t}-\Delta\right)(\tau R h) d V_t\\
				& =\tau \int_M\left(2 \tau|\operatorname{Rc}|^2-R\right) h u d V_t -\tau^2 \int_M\left( 2\langle \nabla R,\nabla h \rangle+R\Delta h\right) u d V_t\\
				&=\tau \int_M\left(2 \tau \operatorname{Rc} \cdot\left(\operatorname{Rc}+\nabla^2 f-\frac{1}{2 \tau} g\right)-2 \tau \operatorname{Rc} \cdot \nabla^2 f\right) h u d V_t -\tau^2 \int_M\left( 2\langle \nabla R,\nabla h \rangle+R\Delta h\right) u d V_t,
			\end{aligned}
		\end{equation}
		where we used evolution equation $\left(\frac{\partial}{\partial t}-\Delta\right) R =2|Rc|^2$ (cf.\cite{H1}).
		
		Then for any $b>0$,
		\begin{equation}\label{est_1}
			\begin{aligned}
				&\ \ \ \	\tau  \frac{d}{d t} \int_M \tau R h u d V_t \\
				& \geq-b \tau^2 \int_M|\operatorname{Rc}|^2 h u d V_t-b^{-1} \tau^2 \int_M\left|\operatorname{Rc}+\nabla^2 f-\frac{1}{2 \tau} g\right|^2 h u d V_t\\
				&\ \ -2 \tau^2 \int_M \left(\operatorname{Rc} \cdot \nabla^2 f\right) h u d V_t-\tau^2 \int_M\left( 2\langle \nabla R,\nabla h \rangle+R\Delta h\right) u d V_t.
			\end{aligned}
		\end{equation}
		The terms
		\begin{equation}\label{est_2}
			\begin{aligned}
				&\ \ \ \	2 \tau^2 \int_M \operatorname{Rc} \cdot \nabla^2 f h u d V_t\\
				= & 2 \tau^2 \int_M\left(-\operatorname{div} \operatorname{Rc} \cdot \nabla f+\operatorname{Rc}(\nabla f, \nabla f)\right) h u d V_t
				-2 \tau^2 \int_M \operatorname{Rc}(\nabla f, \nabla h)u d V_t\\
				= & 2 \tau^2 \int_M\left(\operatorname{Rc}+\nabla^2 f-\frac{1}{2 \tau} g\right) \cdot(\nabla f \otimes \nabla f) h u d V_t \\
				& \quad+\tau^2 \int_M\left(-\nabla R-2 \nabla_{\nabla f} \nabla f+\frac{1}{\tau} \nabla f\right) \cdot \nabla f h u d V_t 	-2 \tau^2 \int_M \operatorname{Rc}(\nabla f, \nabla h)u d V_t\\
				\leq & b \tau^2 \int_M|\nabla f|^4 h u d V_t+b^{-1} \tau^2 \int_M\left|\operatorname{Rc}+\nabla^2 f-\frac{1}{2 \tau} g\right|^2 h u d V_t \\
				& +\tau \int_M \nabla\left(-\tau\left(|\nabla f|^2+R\right)+f \right) \cdot \nabla f h u d V_t
				-2 \tau^2 \int_M \operatorname{Rc}(\nabla f, \nabla h)u d V_t ,
			\end{aligned}
		\end{equation}
		and
		\begin{equation}\label{est_3}
			\begin{aligned}
				&\ \ \ \  \tau \int_M \nabla(-\left.\tau\left(|\nabla f|^2+R\right)+f \right) \cdot \nabla f h u d V_t\\
				&=\tau \int_M\left(-\tau\left(|\nabla f|^2+R\right)+f \right)\left(|\nabla f|^2-\triangle f\right) h u d V_t
				+\tau \int_M\left(-\tau\left(|\nabla f|^2+R\right)+f \right)\langle \nabla f, \nabla h \rangle u d V_t \\
				&=-\tau^2 \int_M\left(\operatorname{Rc}+\nabla^2 f-\frac{1}{2 \tau} g\right) \cdot\left(\left(|\nabla f|^2-\triangle f\right) g\right) h u d V_t\\
				&\ \ \ \  +\tau \int_M\left(\tau\left(-|\nabla f|^2+\triangle f\right)+f-\frac{n}{2}\right)\left(|\nabla f|^2-\Delta f\right) h u d V_t\\
				&\ \ \ \ +\tau \int_M\left(-\tau\left(|\nabla f|^2+R\right)+f \right)\langle \nabla f, \nabla h \rangle u d V_t \\
				& \leq n \tau^2 b \int_M\left(|\nabla f|^2-\triangle f\right)^2 h u d V_t+\tau^2 b^{-1} \int_M\left|\operatorname{Rc}+\nabla^2 f-\frac{1}{2 \tau} g\right|^2 h u d V_t \\
				& \quad-\int_M\left(\tau\left(-|\nabla f|^2+\triangle f\right)+f-\frac{n}{2}\right)^2 h u d V_t  +\int_M\left(\tau\left(-|\nabla f|^2+\triangle f\right)+f-\frac{n}{2}\right)\left(f-\frac{n}{2}\right) h u d V_t\\
				&\quad +\tau \int_M\left(-\tau\left(|\nabla f|^2+R\right)+f \right)\langle \nabla f, \nabla h \rangle u d V_t \\
				& \leq 2 n \tau^2 b \int_M|\nabla f|^4 h u d V_t+2 n^2 \tau^2 b \int_M\left|\nabla^2 f\right|^2 h u d V_t+\tau^2 b^{-1} \int_M\left|\operatorname{Rc}+\nabla^2 f-\frac{1}{2 \tau} g\right|^2 h u d V_t \\
				& \quad  +\int_M\left(\tau\left(-|\nabla f|^2+\triangle f\right)+f-\frac{n}{2}\right)\left(f-\frac{n}{2}\right) h u d V_t +\tau \int_M\left(-\tau\left(|\nabla f|^2+R\right)+f \right)\langle \nabla f, \nabla h \rangle u d V_t \\
			\end{aligned}
		\end{equation}
		Then (\ref{key_2_main})	holds by
		combining (\ref{est_1}), (\ref{est_2}) and (\ref{est_3}).

	\end{proof}	

	\section{Proofs of the Theorem \ref{rigidity}}

	Before presenting our proof of Theorem \ref{rigidity}, we sketch our strategy for the proof of Theorem \ref{rigidity}. 	
With the extra conditions that $(V,h_0)$  is complete and has the bounded sectional curvature, Theorem \ref{rigidity} can be easily obtained by the monotonicity of Perelman's $\mathcal{W}$-functional and the fact $\mathcal{W}\ge 0$ if the conditions of Theorem \ref{rigidity} hold on the hole complete manifold.  However, Perelman's localized entropy $\boldsymbol{\nu}(V, h_0, \tau)$ is not accurately  monotone under the Ricci flow  for the case  $V$ is just 
an open subset in a manifold (c.f. Theorem 5.2 in \cite{w1}).
Our strategy of the proof of the following: for any $p\in V$,
we first do a
conformal change to the initial metric, making it a complete metric with bounded curvature
and leaving it unchanged on smaller open ball $B_{h_0}(p,r_0)\subset V$, and then run a complete Ricci flow $\bar{g}(t)$ up to a
short time by using Shi's classical existence theorem  from \cite{Sh}.
We shall show that  the Ricci curvature for $\bar{g}(t)$  satisfies
\begin{equation}\label{lp_key}
	\int_{B_{h_0}(p,\frac{r_0}{5})}  |\overline{Rc}|^2(x,\xi T) \bar{u}(p, T; x,\xi T)  d \overline{V}_{T\xi}(x) \le  c T^{-2}e^{-\frac{cr_0^2}{1600T}}
\end{equation}
for some $\xi\in [\frac{1}{4},\frac{1}{2}]$,
where $ \bar{u}(p, T; x,t)$ be the heat kernel of the conjugate heat equation (\ref{con_heat_kernel}) centered at $(p,T)$ with respect to $\bar{g}(t)$. 
 This tell us $|\overline{Rc}|(p,T)\to 0$ by letting $T\to 0$ in (\ref{lp_key}) and  $V$ is Ricci flat, so $V$ is flat by the  Bishop-Gromov's volume comparison.	
	Now we give the details of proof of Theorem \ref{rigidity}. 
	\begin{proof}[Proof of Theorem \ref{rigidity}]
Note that (\ref{rigidity_2}) implies that the growth for volume of geodesic $r$-
balls is no less than
 $ \omega_n$ for all $x\in V$ and $r$ sufficient small. It follows from the expansion for volumes of geodesic balls
			$\operatorname{Vol}(B(x, r))=\omega_n r^n\left(1-\frac{R(x)}{6(n+2)} r^2+O\left(r^3\right)\right)$ that scalar curvatures are non-positive for all $x\in V$ if (\ref{rigidity_2}) holds. Then (\ref{positive_scalar}) implies  that  scalar curvatures are zero  for all $x\in V$.
			
		We now take an arbitrary fixed point $p \in V $ and relatively compactly
		small geodesic ball $B_{h_0}(p,2r_0)\subset V$ with its metric
		denoted by $h_0$ satisfying the $R_{h_0}\ge 0$ and (\ref{rigidity_2}).
		Firstly, we applying the conformal change to $B_{h_0}(p,2r_0)$ by Theorem \ref{Ho} and Remark \ref{remark} to get  a complete metric $\bar{g}_0$ on $B_{h_0}(p,2r_0)$ such that
		$$\bar{g}_0=h_0 \text{\ \ on\ \ }B_{h_0}(p,r_0),$$
		\begin{equation}\label{injectivity}
			\operatorname{inj}(\bar{g}_0)\ge \kappa, \text{\ \ \ \qquad \ }
		\end{equation}
		and
		$$|\nabla^k \operatorname{Rm}_{\bar{g}_0}| \leq K$$
		for all $k\ge 0$ throughout $B_{h_0}(p,2r_0)$, where $\kappa$ is a constant only depending on $r_0$, $	\operatorname{inj}(h_0)$ and $\sup\limits_{B_{h_0}(p,2r_0)}|Rm_{h_0}|$ and  $K$ is a constant only depending on $r_0$, $k$ and $\sup\limits_{B_{h_0}(p,2r_0)}|\nabla^kRm_{h_0}|$.
		Then by Shi's local existence theorem \cite{Sh} for the noncompact Ricci flow, we have a complete solution to Ricci flow $\bar{g}(t)$ on $B_{h_0}(p,2r_0)\times [0,\frac{1}{16K}]$ such that
		$$
		\bar{g}(0)=\bar{g}_0,
		$$
		and
		$$|\operatorname{Rm}_{\bar{g}(t)}| \leq 2K$$ throughout $B_{h_0}(p,2r_0)\times [0,\frac{1}{16K}]$.  Together with Shi's estimates \cite{Sh} and modified Shi's interior estimates \cite{LT}, we have
		\begin{equation}\label{curvature_bound}
			|\nabla^k\operatorname{Rm}_{\bar{g}(t)}| \leq C(K)
		\end{equation}	
		throughout $B_{h_0}(p,2r_0)\times [0,\frac{1}{16K}]$ for all $k\ge 0$.
		
		Now for any fixed $0<T\le \min\{\frac{1}{16K},1\}$, we consider the
		rescaled Ricci flow	$g(t)=T^{-1}\bar{g}(Tt)$ for $0\le t\le 1$. 	Let $u(x,t)\doteq u(p, 1; x,t)$ be the heat kernel of the conjugate heat equation (\ref{con_heat_kernel}) centered at
		$(p,1)$ with respect to $g(t)$. Combing with (\ref{injectivity}) and (\ref{curvature_bound}), we get from Theorem \ref{heat_kernel_estimates}
		that
		\begin{equation}\label{heat_kernel_estimates_1}
			\begin{gathered}
				C_1(1-t)^{-\frac{n}{2}} e^{-\frac{d_{g(0)}^2(p, x)}{D_2(1-t)}} \le u( x,t) \le  C_2(1-t)^{-\frac{n}{2}} e^{-\frac{d_{g(0)}^2(p,x)}{D_1(1-t)}}.
			\end{gathered}
		\end{equation}
		for any $0<t<1$ with
		positive constants $C_1$, $C_2$, $D_1$ and $D_2$ depending on $n$ and $K$.
		
		Here and below,  we shall use the notations $c$, $c'$, $c''$ or $c_l$ on different lines if the constants differ at most by a multiplicative factor depending only
		on $n$ and $K$. We also use the notations $\operatorname{Rc}$ and $R$ without the subscripts to denote the Ricci curvature and scalar curvature with respect to $g(t)$.

		Let $\psi$ be a cut-off function such that $\psi \equiv 1$ on $(-\infty, 1)$, $\psi \equiv 0$ on $(2, \infty)$, $0\le\psi\le 1$ and $-10 \leq \psi^{\prime} \leq 0$ everywhere. Moreover, $\psi$ satisfies
		$$
		\psi^{\prime \prime} \geq-10 \psi, \quad\left(\psi^{\prime}\right)^2 \leq 10 \psi.
		$$
		Take $$h_1(x)=\psi\left(\frac{d_{g(0)}(p,x)}{10 A}\right).$$ Let
		$S:=\left.\int_{M} u h_1d V_t\right|_{t=0} $ and $\tilde{u}=\frac{u h_1}{S}$. And $\tilde{u}$ is supported on $B_{h_0}(p,r_0)$ at $t=0$ if $A\le  \frac{1}{20}T^{-\frac{1}{2}}r_0$. It follows from (\ref{rigidity_2}) and  the scalar curvature is zero at $t=0$ on $B_{h_0}(p,r_0)$ that
		\begin{equation}\label{wge0}
			\begin{aligned}
				\mathcal{W}(M, g(0), \tilde{u}^{\frac{1}{2}}(0),1)
				&= \mathcal{W}(B_{h_0}(p,r_0), g(0), \tilde{u}^{\frac{1}{2}}(0),1)\\
				&= -n-\frac{n}{2} \log (4 \pi )+\int_{B_{h_0}(p,r_0)}\left\{4|\nabla \tilde{u}^{\frac{1}{2}}(0)|^2- \tilde{u}(0) \log \tilde{u}(0) \right\} d vol_{g(0)}\\
				&\ge 0
			\end{aligned}
		\end{equation}
		(c.f. Theorem 22.16 in \cite{RFV3} ).
		Then by Lemma \ref{key_1} and (\ref{heat_kernel_estimates_1}), we have for $A\le  \frac{1}{20}T^{-\frac{1}{2}}r_0$ and $A$ is large enough
		\begin{equation}\label{l_1}
			\begin{aligned}
				\left. \int_{M} v h_1 d V_t\right|_{t=0}
				&\ge 	 \mathcal{W}(M, g(0), \tilde{u}^{\frac{1}{2}}(0),1)\left.\int_{M} u h_1 d V_t \right|_{t=0}\\
				&\ \ \ - \left.\int_{\{d_{g(0)}(p,x)\ge 10A\}}\left\{4 |\nabla \sqrt{h_1}|^2-h_1 \log h_1\right\} ud V_t \right|_{t=0}\\
				&\ge 	 - \left.\int_{\{d_{g(0)}(p,x)\ge 10A\}}\left\{4 |\nabla \sqrt{h_1}|^2-h_1 \log h_1\right\} u d V_t \right|_{t=0}\\
				&\ge 	 - \left.\left(\frac{2}{5A^2}+e^{-1} \right)\int_{M\backslash \Omega'_{0}} ud V_t\right|_{t=0}\\
				&\ge 	 - \left(\frac{2}{5A^2}+e^{-1} \right)\int_{\{d_{g(0)}(p,x)\ge 10A\}} Ce^{-\frac{d_{g(0)}(p,x)^2}{D_1}}dV_0\\
				&\ge 	 -  ce^{-cA^2}
			\end{aligned}
		\end{equation}
		where we used  (\ref{heat_kernel_estimates_1}), (\ref{curvature_bound}), the volume comparison, $|\nabla \sqrt{h_1}|_{g(0)}^2\le  \frac{\left(\psi^{\prime}\right)^2}{(10A)^2\psi}\le \frac{1}{10A^2}$ and $-h_1 \log h_1\le e^{-1}$.	
		Setting $$\phi(x,t)=e^{-\frac{t}{10A^2}}\psi\left(\frac{d_{g(t)}(x,p)+2 K \sqrt{t}}{10 A}\right).$$
		We have
		\begin{align*}
			&\left(\frac{\partial}{\partial t}-\Delta\right)\psi\left(\frac{d_{g(t)}(x,p)+2 K\sqrt{t}}{10 A}\right)\\
			=&\frac{1}{10 A}\left(\left(\frac{\partial}{\partial t}-\Delta\right) d_{g(t)}(x,p)+\frac{K}{\sqrt{t}}\right) \psi^{\prime}-\frac{1}{(10 A)^2} \psi^{\prime \prime} \leq \frac{\psi}{10A^2},
		\end{align*}
		where we used $\left(\frac{\partial}{\partial t}-\Delta\right) d_{g(t)}(x,p)+\frac{K}{\sqrt{t}}\ge 0$ (see Lemma 8.3 in \cite{P1} or Section 17 in \cite{H2}). Then we have $\left(\frac{\partial}{\partial t}-\Delta\right) \phi \leq 0$.
		For $v=\left\{(1-t)\left(2 \Delta f-|\nabla f|^2+R\right)+f-n\right\} u$,
		we conclude from  Theorem \ref{Harnack} that
		\begin{equation}\label{qqqqq}
			\begin{aligned}
				\frac{d}{d t} \int_M (-\nu) \phi dV_t =&\int_M\left\{(-\nu)\left(\frac{\partial}{\partial t}-\Delta\right) \phi+\phi \left(-\frac{\partial}{\partial t}-\Delta+R\right)\nu\right\}dV_t \\
				\leq& - 2\int_M (1-t) |Rc+\nabla^2 f-\frac{1}{2(1-t)}g|^2 u\phi dV_t.
			\end{aligned}
		\end{equation}
		Denote $\tau=1-t$.
		We obtain that for $ A\le  \frac{1}{20}T^{-\frac{1}{2}}r_0$ and $A$ large enough
		$$
		2\int^1_0\int_M \tau |Rc+\nabla^2 f-\frac{1}{2\tau}g|^2 u\phi dV_tdt\le \left.\int_M (-\nu) \phi dV_t\right|_{t=0}\le \left.\int_M (-\nu) h_1 dV_t\right|_{t=0}\le ce^{-cA^2}.
		$$
		By (\ref{curvature_bound})
		we obtain that for $A\le  \frac{1}{20}T^{-\frac{1}{2}}r_0$
		and $A$ large enough
		\begin{equation}\label{m_1}
			\int^1_0\int_{\{d_{g(0)}(p,x)\le 8A\}} \tau |Rc+\nabla^2 f-\frac{1}{2\tau}g|^2 udV_tdt\le ce^{-cA^2}.
		\end{equation}
		and
		\begin{equation}\label{m_2}
			\left.\int_{\{d_{g(0)}(p,x)\le 8A\}}  (-\nu)  dV_t\right|_{t=0}\le ce^{-cA^2}.
		\end{equation}
		We take $$h(x)=\psi\left(\frac{d_{g(0)}(p,x)}{4 A}\right)$$ in Lemma \ref{key_2} and get for any $0<t_1<t_2<1$
		\begin{equation}\label{l_2}
			\begin{aligned}
				\left. \int_M \tau R h u d V_t \right|^{t=t_2}_{t=t_1}\geq & - b  \int_{t_2}^{t_1}\int_M \tau\left(|\operatorname{Rc}|^2+(2n+1)|\nabla f|^4+2n^2\left|\nabla^2 f\right|^2\right) h u d V_tdt  \\
				& -3b^{-1} \int_{t_2}^{t_1}\ \int_M \tau\left|\operatorname{Rc}+\nabla^2 f-\frac{1}{2 \tau} g\right|^2 h u d V_t dt\\
				& -   \int_{t_2}^{t_1}\int_M \tau^{-1}\left(\tau\left(-|\nabla f|^2+\triangle f\right)+f-\frac{n}{2}\right) \left(f-\frac{n}{2}\right) h u d V_tdt \\
				& -\int_{t_2}^{t_1}\int_M\tau \left( 2\langle \nabla R,\nabla h \rangle+R\Delta h\right) u d V_t dt+2 \int_{t_2}^{t_1}\int_M \tau\operatorname{Rc}(\nabla f, \nabla h)u d V_tdt \\
				& -\int_{t_2}^{t_1}\int_M\left(-\tau\left(|\nabla f|^2+R\right)+f \right)\langle \nabla f, \nabla h \rangle u d V_tdt,
			\end{aligned}
		\end{equation}
		where $\tau=1-t$ and $	f=-\frac{n}{2} \log \left(4 \pi \tau\right)-\log u$.
		Next we estimate the terms in (\ref{l_2}).
		
		Notice that $u(x,t)\le  C_2(1-t)^{-\frac{n}{2}}$ by (\ref{heat_kernel_estimates_1}) and
		applying the Theorem \ref{gradient_estimates} on time interval $[0,1-\epsilon]$, by letting $\epsilon\to 0$ we conclude that
		\begin{equation}\label{conclude}
			\tau|\nabla f|^2\le c_1(f+1).
		\end{equation}
		It follows from (\ref{heat_kernel_estimates_1}) that
		\begin{equation}\label{m_3}
			\int^{t_2}_{t_1} \int_M \tau|\nabla f|^4 h u d V_t dt
			\le \int^{t_2}_{t_1} \int_M \frac{c_1^2(f+1)^2}{\tau} h u d V_t dt \le c'(1-t_2)^{-1}.
		\end{equation}
		By (\ref{m_1}) and (\ref{heat_kernel_estimates_1}), we have
		\begin{equation}\label{m_4}
			\int^{t_1}_{t_2}\int_M \tau |\nabla^2 f|^2 uhd V_t dt\le \int^{t_1}_{t_2}\int_M \frac{n}{4\tau} uhd V_t dt +K^2+ ce^{-cA^2}  \le \frac{n}{4}(1-t_2)^{-1}+K^2+ ce^{-cA^2} .
		\end{equation}
		It follows from (\ref{m_1}) for $A\le  \frac{1}{20}T^{-\frac{1}{2}}r_0$
		and large enough $A$ that
		\begin{equation}\label{3.16}
			2\int^1_0\int_M \tau |R+\Delta f-\frac{n}{2\tau}|^2 uhd V_t dt\le  ce^{-cA^2},
		\end{equation}
		and we get from (\ref{3.16}) and (\ref{m_2}) that
		\begin{equation*}
			\int^{1}_{0}\int_M \left|\tau\left(-|\nabla f|^2+\triangle f\right)+f-\frac{n}{2}\right|  uh  d V_t \le c e^{-cA^2}
		\end{equation*}
		Since $h=0$ outside $B_{g(0)}(p,8A)$,
		we have for $A\le  \frac{1}{20}T^{-\frac{1}{2}}r_0$
		\begin{equation*}
			\begin{aligned}
				&\quad
				\int^{t_2}_{t_1}\int_M \tau^{-1}\left(\tau\left(-|\nabla f|^2+\triangle f\right)+f-\frac{n}{2}\right) \left(f-\frac{n}{2}\right) h u d V_t \\
				\le &
				\int^{t_2}_{t_1}\int_M \tau^{-1}\left|\tau\left(-|\nabla f|^2+\triangle f\right)+f-\frac{n}{2}\right| \left|\frac{d^2_{g(0)}(p,x)}{D_2\tau}-\frac{n}{2}\right|h u d V_t
				\\
				\le& 	c_2A^2\int^{t_2}_{t_1}\int_M  \tau^{-2}\left(\tau\left(-|\nabla f|^2+\triangle f\right)+f-\frac{n}{2}\right)  h u d V_t
				\\
				\le& c_2A^2 (1-t_2)^{-2}\int^{t_2}_{t_1}\int_M  \left(\tau\left(-|\nabla f|^2+\triangle f\right)+f-\frac{n}{2}\right)  h u d V_t\\
				\le& cA^2 (1-t_2)^{-2} e^{-cA^2}\le c' (1-t_2)^{-2} e^{-c'A^2}.
			\end{aligned}
		\end{equation*}
		for some $c''<c$ and large enough $A$.
		Without loss of generality and for the season of simplicity, we still use $c$ instead of $c''$ and
		\begin{equation}\label{m_5}
			\begin{aligned}
				\int^{t_2}_{t_1}\int_M \tau^{-1}\left(\tau\left(-|\nabla f|^2+\triangle f\right)+f-\frac{n}{2}\right) \left(f-\frac{n}{2}\right) h u d V_t\le c (1-t_2)^{-2} e^{-cA^2}.
			\end{aligned}
		\end{equation}
		Just from (\ref{curvature_bound}), we have $|\nabla h|(g(t))\le c_3$ and $|\Delta h|(g(t))\le c_3$ for $t\in [0,1]$. And notice that
		$|\nabla h|= 0$ and $|\Delta h|= 0$ on if $d_{g(0)}(p,x)\le 4A$ and for $A$ is large enough, we have for any $0<t_1<t_2<1$
		\begin{equation}\label{m_6}
			\begin{aligned}
				&\quad\left|\int^{t_2}_{t_1}\int_M\left(2\langle \nabla R,\nabla h \rangle+R\Delta h\right)  u d V_t dt\right|\\
				\le &
				3Kc_3\int^{t_2}_{t_1}\int_{d_{g(0)}(p,x)\ge 4A} u d V_t dt\\
				\le& 3Kc_3C_2\int^{t_2}_{t_1}\int_{d_{g(0)}(p,x)\ge 4A} \tau^{-\frac{n}{2}} e^{-\frac{d^2_{g(0)}(p,x)}{D_1\tau}}d V_t dt\\
				\le& 3Kc_3C_2\int^{t_2}_{t_1}\int_{d_{g(0)}(p,x)\ge 4A}  e^{-D_1^{-1}d^2_{g(0)}(p,x)}d V_t dt\\
				\le& c e^{-cA^2},
			\end{aligned}
		\end{equation}
		where we used
		$\tau^{-q} e^{-\frac{cx}{\tau}} \le   e^{-cx}$ for any $0<\tau \le 1$ if $x\ge \frac{q}{c}$ . Similarly, we have
		\begin{equation}\label{m_7}
			\begin{aligned}
				&\quad\left|\int^{t_2}_{t_1}\int_M \tau\operatorname{Rc}(\nabla f, \nabla h)u d V_t dt\right|
				\\
				\le &
				c_3K\int^{t_2}_{t_1}\int_{d_{g(0)}(p,x)\ge 4A}\tau |\nabla f|u d V_t dt\\
				\le& c_1^{\frac{1}{2}}c_3 K\int^{t_2}_{t_1}\int_{d_{g(0)}(p,x)\ge 4A} \tau^{\frac{1}{2}} (f+1)^{\frac{1}{2}} ud V_t dt
				\\
				\le& c_1^{\frac{1}{2}}c_3 K\int^{t_2}_{t_1}\int_{d_{g(0)}(p,x)\ge 4A} \tau^{\frac{1}{2}} (\frac{d^2_{g(0)}(p,x)}{D_2\tau}+1)^{\frac{1}{2}} \tau^{-\frac{n}{2}} e^{-\frac{cd^2_{g(0)}(p,x)}{\tau}}d V_t dt\\
				\le& c e^{-cA^2}
			\end{aligned}
		\end{equation}
		and
		\begin{equation}\label{m_8}
			\begin{aligned}
				&\quad \left|\int^{t_2}_{t_1}\int_M\left(-\tau\left(|\nabla f|^2+R\right)+f \right)\langle \nabla f, \nabla h \rangle u d V_t dt\right|\\
				\le &
				\int^{t_2}_{t_1}\int_{d_{g(0)}(p,x)\ge 4A}
				\left(K|\nabla f|+|\nabla f|^3+f|\nabla f|\right) u d V_t dt\\
				\le& c e^{-cA^2}.
			\end{aligned}
		\end{equation}
		
		Combing with (\ref{m_1})-(\ref{m_8}) and take $t_2=1-e^{-\frac{c}{4} A^2}$ and $b=e^{-\frac{c}{2} A^2}$ in (\ref{l_2}),
		we get for any $0<t_1\le t_2$, $A\le  \frac{1}{20}T^{-\frac{1}{2}}r_0$ and large enough $A$
		\begin{equation}\label{scalar_upper_bound}
			\begin{aligned}
				&\left. \int_M (1-t) R h u d V_t \right|_{t=t_1} \\
				\le & 	\left. \int_M (1-t) R h u d V_t \right|_{t=t_2}+c e^{-c A^2}\left((1-t_2)^{-2}+b^{-1}+1\right)+cb\left((1-t_2)^{-1}+1\right)\\
				\le& ce^{-\frac{c}{4} A^2}.
			\end{aligned}
		\end{equation}
		Then for $A\le  \frac{1}{20}T^{-\frac{1}{2}}r_0$ and large enough $A$
		\begin{equation}\label{R_upper_bound}
			\begin{aligned}
				\left. \int_M  R h u d V_t \right|_{t=\frac{1}{2}} \le 2ce^{-\frac{c}{4} A^2}.
			\end{aligned}
		\end{equation}
		We calculate that
		\begin{equation}\label{R_estimates}
			\begin{aligned}
				\frac{d}{dt}\int_M  R h u d V_t &=\int_M u\left(\frac{\partial}{\partial t}-\Delta\right)( R h) d V_t\\
				&= \int_M 2|Rc|^2 u h d V_t-\int_M\left( 2\langle \nabla R,\nabla h \rangle+R\Delta h\right) u d V_t .
			\end{aligned}
		\end{equation}
		It follows that $A\le  \frac{1}{20}T^{-\frac{1}{2}}r_0$ and large enough $A$
		\begin{equation}\label{bbq}
			\begin{aligned}
				&\quad\int^{\frac{1}{2}}_{\frac{1}{4}}	\int_M 2|Rc|^2 u h d V_tdt\le \int^{\frac{1}{2}}_0	\int_M 2|Rc|^2 u h d V_tdt\\	
				&=\left. \int_M  R h u d V_t\right|_{t=\frac{1}{2}}-\left. \int_M  R h u d V_t\right|_{t=0} +\int^{\frac{1}{2}}_0\int_M\left( 2\langle \nabla R,\nabla h \rangle+R\Delta h\right) u d V_t dt\\
				&\le 3ce^{-\frac{c}{4} A^2},
			\end{aligned}
		\end{equation}
		where we used (\ref{m_6}) and $R= 0$ at $t=0$ on $B_{h_0}(p,r_0)$. Then there exists
		$\xi\in [\frac{1}{4},\frac{1}{2}]$ such that
		\begin{equation}\label{l_4}
			\left.	\int_M 2|Rc|^2 u h d V_t\right|_{t=\xi}\le  12ce^{-\frac{c}{4} A^2}.
		\end{equation}
		
		Let $ \bar{u}(p, s; x,t)$ be the heat kernel of the conjugate heat equation (\ref{con_heat_kernel}) centered at $(p,s)$ with respect to $\bar{g}(t)$.
		Noted that under the rescaling $g(t)=T^{-1}\bar{g}(Tt)$,
		we have $T^{\frac{n}{2}}\bar{u}(p, T; x,\xi T) =u(p,1; x,\xi )$. Then it follows from (\ref{l_4})
		for $A\le  \frac{1}{20}T^{-\frac{1}{2}}r_0$ and $\xi\in [\frac{1}{4},\frac{1}{2}]$ that
		$$	2\int_M  |\overline{Rc}|^2(x,\xi T) \bar{u}(p, T; x,\xi T) h(x) d \overline{V}_{T\xi}(x)\le  12c T^{-2}e^{-\frac{c}{4}A^2}.$$
		Now take $A=\frac{1}{20}r_0T^{-\frac{1}{2}}$, we have
		$$	2\int_{M}  |\overline{Rc}|^2(x,\xi T) \bar{u}(p, T; x,\xi T) h(x) d \overline{V}_{T\xi}(x) \le  12c T^{-2}e^{-\frac{cr_0^2}{1600T}}$$
		with $\xi\in [\frac{1}{4},\frac{1}{2}]$, 
		where $\overline{Rc}$ denotes the Ricci curvature with respect to
		$\bar{g}(t)$.
		Take $T\to 0$,
		we have
		\begin{equation}\label{key}
			|\overline{Rc}|^2(p,0)=\lim\limits_{T\to 0}\int_{M}  |\overline{Rc}|^2(x,\xi T) \bar{u}(p, T; x,\xi T) h(x) d \overline{V}_{T\xi}(x) =0.
		\end{equation}
		The reason why (\ref{key}) holding is the following: Notice that $h(x)\equiv 1$ on
		$B_{h}(p,\frac{1}{5}r_0)$. For any $\epsilon>0$, we can choose $\delta>0$
		such that if $d_{h}(p,x)<\delta<\frac{1}{5}r_0$ and $T<\delta$, such that $\left||\overline{Rc}|^2(x,\xi T)- |\overline{Rc}|^2(p,0)\right|<\epsilon$ and $h(x)=1 $.
		Then
		\begin{equation*}
			\begin{aligned}
				&\quad\left|\int_{M}  |\overline{Rc}|^2(x,\xi T) \bar{u}(p, T; x,\xi T) h(x) d \overline{V}_{T\xi}(x) -|\overline{Rc}|^2(p,0)\right|\\	
				=	&\left|\int_{M}\left(  |\overline{Rc}|^2(x,\xi T) h(x)-|\overline{Rc}|^2(p,0)\right) \bar{u}(p, T; x,\xi T) d \overline{V}_{T\xi}(x) \right|\\
				\le 	&\int_{B_{h}(p,\delta)}\left| |\overline{Rc}|^2(x,\xi T) -|\overline{Rc}|^2(p,0)\right| \bar{u}(p, T; x,\xi T) d \overline{V}_{T\xi}(x) \\
				&+ \int_{M\backslash B_{h}(p,\delta)}\left| |\overline{Rc}|^2(x,\xi T)h(x) -|\overline{Rc}|^2(p,0)\right| \bar{u}(p, T; x,\xi T) d \overline{V}_{T\xi}(x) \\
				\le	& \epsilon+2K^2\int_{M\backslash B_{h}(p,\delta)} C_2 (T-\xi T)^{-\frac{n}{2}} e^{-\frac{d_{\bar{g}(0)}^2(p, x)}{DT(1-\xi)}} d \overline{V}_{T\xi}(x)	\le  2\epsilon,
			\end{aligned}
		\end{equation*}
		as $T\to 0$.
		Since $p$ is chosen arbitrarily, we conclude that $|\overline{Rc}|\equiv 0$ on $V$. Then the theorem holds by the Bishop-Gromov's volume comparison.
	\end{proof}
	
	\section{Proof of Theorem \ref{rigidity_mu} and Theorem \ref{rigidity_log_sobolev}}	
	In this section we give the proofs of Theorem \ref{rigidity_mu} and Theorem \ref{rigidity_log_sobolev}. 
	In order to prove Theorem \ref{rigidity_mu},  firstly we need to prove some 
	lemmas.
	
	 In \cite{BCL}
	Bakry, Concordet and Ledoux  proved  that if an $n$-dimensional complete Riemannian manifold $(M,g)$ with non-negative Ricci curvature satisfies the  $L^2$-logarithmic Sobolev inequality with the best constant of $\mathbb{R}^n$, then $(M,g)$ must be isometric to $\mathbb{R}^n$. Their proof relies on a property of the large time behavior of heat kernel, which proved by Li \cite{LiPeter} that for a complete manifold with non-negative Ricci curvature 
	\begin{equation}\label{large_time_behavior}
		\lim _{t \rightarrow \infty} Vol(B(p, \sqrt{t}) \widetilde{H}(x,t;p, 0)=\frac{\left|\omega_n\right|}{(4 \pi)^{n / 2}},
	\end{equation}
	where  $\widetilde{H}(x,t;p,0)$ is  the heat kernel $\left(\frac{\partial}{\partial t}-\Delta\right)\widetilde{H}=0$ on $M$. Moreover, the  $L^2$-logarithmic Sobolev inequality with the best constant of $\mathbb{R}^n$ implies
	that
	\begin{equation}\label{l_uppp}
		\widetilde{H}(x,t;p, 0)\le\frac{1}{(4 \pi t)^{n / 2}} 
	\end{equation}
	(cf. Proposition 1.4 in \cite{BCL} ).
	It follows from  (\ref{large_time_behavior}) and (\ref{l_uppp}) that
	\begin{equation}\label{asymp}
		\lim _{r \rightarrow \infty}\frac{Vol(B(p, r))}{r^n}\ge \omega_n
	\end{equation}
	and hence $(M,g)$ must be isometric to $\mathbb{R}^n$ by the Bishop-Gromov 
	volume comparison theorem. One may see \cite{Ni2} and \cite{BKT} for the other proofs. 
	However, their methods could not be used for the  local case. We 
	extend their result to local case by using the small time behavior of the Dirichlet heat kernel. This lemma will be needed in the proof of  Theorem \ref{rigidity_mu},

	\begin{lem}\label{Ricci_flat_flat}
		Let $M$ be an $n$-dimensional manifold. Suppose that for some open subset $V\subset M$   satisfying
		\begin{equation}\label{log_sobolev_with_best}
			\int_V \varphi^2 \log \varphi^2 d vol\le \int_{V}4\tau |\nabla \varphi|^2 d vol-n-\frac{n}{2} \log (4 \pi \tau),
		\end{equation}
		for all $\varphi\in\mathcal{S}(V)$ and $\tau<\tau_0$ and
		for some $\tau_0>0$. Then $R(x)\le 0$ for all $x\in V$, where $R$ is the scalar curvature. Moreover, if we assume additionally that
		\begin{equation*}
			Rc(x)\ge 0
		\end{equation*}	
		for all $x\in V$,
		then $V$ must be flat.	
	\end{lem}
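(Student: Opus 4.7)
The paragraph just before the lemma hints that the proof should be based on the small-time behavior of the Dirichlet heat kernel, so I adopt that strategy and split the argument into two stages. First, I convert the log-Sobolev assumption (\ref{log_sobolev_with_best}) into an ultracontractive upper bound on the Dirichlet heat kernel $H_V$ of $V$. By Gross's classical equivalence (later exploited by Davies), the $L^2$-logarithmic Sobolev inequality with the sharp Euclidean constant, valid for $\tau<\tau_0$, is equivalent to the pointwise bound
\begin{equation*}
H_V(x,t;y,0)\le (4\pi t)^{-n/2},\qquad x,y\in V,\ 0<t<\tau_0.
\end{equation*}
Concretely, for $u_0\in C_c^\infty(V)$ let $u(t)=e^{t\Delta_V}u_0$; testing (\ref{log_sobolev_with_best}) against $\varphi=u^{p/2}/\|u\|_p^{p/2}\in W_0^{1,2}(V)$ and choosing $p=p(t)$ so that $p$ increases from $1$ to $\infty$ on the interval $(0,t)$, the standard entropy-method computation yields $\|u(t)\|_\infty\le (4\pi t)^{-n/2}\|u_0\|_1$, hence the stated kernel bound.

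Second, I invoke the classical Minakshisundaram--Pleijel on-diagonal small-time asymptotic at an interior point $x\in V$:
\begin{equation*}
H_V(x,t;x,0)=\frac{1}{(4\pi t)^{n/2}}\left[1+\tfrac{R(x)}{6}\,t+a_2(x)\,t^2+O(t^3)\right],
\end{equation*}
where $a_2=\tfrac{1}{72}R^2-\tfrac{1}{180}|\mathrm{Rc}|^2+\tfrac{1}{180}|\mathrm{Rm}|^2-\tfrac{1}{30}\Delta R$, and the boundary corrections relative to the free kernel are exponentially small in $t$. Comparing this expansion with the upper bound from Stage 1 and letting $t\to 0^+$ immediately forces $R(x)\le 0$, which proves the first assertion.

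For the second assertion, the extra hypothesis $\mathrm{Rc}\ge 0$ gives $R=\mathrm{tr}\,\mathrm{Rc}\ge 0$; combined with $R\le 0$ this yields $R\equiv 0$ on $V$, and then $\mathrm{Rc}\ge 0$ with vanishing trace forces $\mathrm{Rc}\equiv 0$, so $V$ is Ricci flat. Substituting $R=0$, $\mathrm{Rc}=0$ and $\Delta R=0$ into the formula for $a_2$ leaves $a_2(x)=\tfrac{1}{180}|\mathrm{Rm}|^2(x)\ge 0$, and the ultracontractive bound now forces the $t^2$ coefficient in the expansion to be non-positive. Hence $|\mathrm{Rm}|(x)=0$ at every interior point $x\in V$, so $V$ is flat.

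\emph{Main obstacle.} The delicate point is making Stage 1 rigorous when $V$ is not relatively compact, since the Dirichlet Laplacian and its heat kernel on a general open set require some care and the entropy method presumes sufficient integrability of $u(t)$. I expect to bypass this by first proving the bound on an arbitrary relatively compact subdomain $V'\subset\subset V$: the inclusion $W_0^{1,2}(V')\subset W_0^{1,2}(V)$ restricts (\ref{log_sobolev_with_best}) unchanged to $V'$, the Dirichlet semigroup on $V'$ is unambiguous, and the small-time expansion at a fixed interior $x$ only reads the metric in an arbitrarily small neighborhood of $x$, independent of the choice of $V'$.
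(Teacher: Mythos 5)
Your proposal follows essentially the same route as the paper: use Davies's semigroup/entropy iteration to convert the sharp-constant log-Sobolev inequality into the ultracontractive bound $H(x,t;y,0)\le(4\pi t)^{-n/2}$, then read off the on-diagonal Minakshisundaram--Pleijel coefficients at a fixed interior point. Two small remarks. First, where you assert that the boundary correction is ``exponentially small in $t$,'' the paper instead doubles $B(p,r_0)$ to a closed manifold, writes $H=\widetilde H+f_{p,0}$, observes only $f_{p,0}(p,t)\to 0$, and then splits into $n\ge 4$ (where $t^{2-n/2}$ does not vanish, so $f_{p,0}\to 0$ already suffices) versus $n\le 3$ (where $\mathrm{Rc}\equiv 0$ gives $\mathrm{Rm}\equiv 0$ by pure algebra, no heat-kernel input needed); your exponential-smallness claim would subsume this case split but you should either justify it (Gaussian decay of the heat kernel at distance $r_0$ from the boundary) or adopt the paper's case distinction. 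Second, your $a_2$ has $-\tfrac{1}{30}\Delta R$ whereas the expansion the paper cites has $+\tfrac{1}{30}\Delta R$; this is harmless in the present lemma because $R\equiv 0$ forces $\Delta R=0$, but the sign is used with content in the proof of Theorem \ref{rigidity_Sn}, so it is worth getting the convention straight.
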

	\begin{proof}
		We take an arbitrary fixed point $p \in V $ and relatively compactly
		small geodesic ball $B(p,2r_0)\subset V$.	
		Let $H(x,t)\doteq H(x,t;p,0)$ be  the Dirichlet heat kernel centered at $(p,0)$  of
		$$
		\left\{\begin{array}{l}
			\left(\frac{\partial}{\partial t}-\Delta\right) H=0 \text{ in } B(p,r_0),\\
			H=0 \text{ on } \partial B(p,r_0).
		\end{array}\right.
		$$	
		Firstly, we follow the original ideas of of Davies \cite{Davies} (also see Theorem 1.2 in \cite{BCL}) to show  (\ref{log_sobolev_with_best}) implies 
		that 
		\begin{equation}\label{heat_kernel_up_best}
			H(x, T) \leq \frac{1}{(4 \pi T)^{n / 2}} 
		\end{equation}
		for all $x\in V$ and $T<\tau_0$.	
		Given $T \in(0,1]$ and $t \in(0, T)$, we take $p(t)=$ $T /(T-t)$ so that $p(0)=1$ and $p(T)=\infty$. By direct computation, we obtain that
		$$
		\begin{aligned}
			\partial_t\|H\|_{p(t)}= & \partial_t\left(\int_{V} H^{p(t)}(x, t) dV\right)^{\frac{1}{p(t)}} \\
			=- & \frac{p^{\prime}(t)}{p^2(t)}\|H\|_{p(t)} \log \int_{V} H^{p(t)}(x, t) dV+\frac{1}{p(t)}\left(\int_{V} H^{p(t)}(x, t) dV\right)^{\frac{1}{p(t)}-1} \\
			& \times\left[\int_{V} H^{p(t)}(\log H) p^{\prime}(t) d V+p(t) \int_{V}H^{p(t)-1}\Delta H dV\right] .
		\end{aligned}
		$$
		Using integration by
		parts and $p(t)>1$ for $t>0$, we have for $t>0$ 		
		\begin{equation}\label{ibp}
			\begin{aligned}
				& p^2(t)\|H\|_{p(t)}^{p(t)} \partial_t \log \|H\|_{p(t)} \\
				& =-p^{\prime}(t)\|H\|_{p(t)}^{p(t)} \log \int_{V} H^{p(t)} dV+p(t) p^{\prime}(t) \int_{V}H^{p(t)} \log H dV \\
				& \quad-4(p(t)-1) \int_{V}\left|\nabla\left(H^{\frac{p(t)}{2}}\right)\right|^2 dV.
			\end{aligned}
		\end{equation}	
		Merging the first two terms on the righthand side of the above equality and making the substitution $v=H^{\frac{p(t)}{2}} /\left\|H^{\frac{p(t)}{2}}\right\|_2$, we arrive at for $t>0$, after dividing by $\|H\|_{p(t)}^{p(t)}$,
		$$
		\begin{aligned}
			\partial_t \log \|H\|_{p(t)} =\frac{p^{\prime}(t)}{	p^2(t)}\left( \int_{V} v^2 \log v^2 dV-\frac{4(p(t)-1)}{p'(t)} \int_{V}|\nabla v|^2 dV\right).
		\end{aligned}
		$$
		Note that for $0<t<T$
		$$
		\begin{gathered}
			\frac{4(p(t)-1)}{p^{\prime}(t)}=\frac{4 t(T-t)}{T} \leq T < \tau_0,
		\end{gathered}
		$$
		and  $\frac{p^{\prime}(t)}{ p^2(t)} =\frac{1}{T}$. Applying (\ref{log_sobolev_with_best}), we get for  $0<t\le T<\tau_0$ 
		$$
		\partial_t \log \|H\|_{p(t)} \leq \frac{1}{T}\left(-\frac{n}{2} \log \frac{4\pi  t(T-t)}{T} -n\right) .
		$$
		This yields (\ref{heat_kernel_up_best}), after integration from $t=\epsilon$ to $t=T$ for $T<\tau_0$  and let $\epsilon\to 0$.
		
		We extend $B(p,r_0)$ to a smooth compact manifold $\widetilde{\mathcal{M}}$ without boundary. Indeed, we may do this by extending $B(p,r_0)$ to a collar past its boundary, doubling the extension, and then extending the metric so that it is a product in a collar of the new boundary and then doubling the metric. Denote
		$
		\widetilde{H}
		$
		be the heat kernel for $\left(\frac{\partial}{\partial t}-\Delta\right)\widetilde{H}=0$ in $\widetilde{\mathcal{M}}$.
		Let
		$
		f_{p, 0}: B(p,r_0)\times[0, T] \rightarrow \mathbb{R}
		$
		be the solution to $\left(\frac{\partial}{\partial t}-\Delta\right) f_{p, 0}=0$ with the boundary conditions
		$$
		\begin{aligned}
			& f_{p, 0}(x, 0)=0 \quad \text { for } x \in B(p,r_0), \\
			& f_{p, 0}(x, t)=-\widetilde{H}(x, t ; p, 0) \quad \text { for } x \in \partial B(p,r_0) \text { and } t \in(0, T] .
		\end{aligned}
		$$
		Then we have
		$$
		H(x, t ; p,0) = \widetilde{H}(x, t ; p,0)+f_{p, 0}(x, t) 
		$$
		(cf. Lemma 24.24 and (24.16) in \cite{RFV3}).
	By the expansion of the heat kernel of closed manifolds(cf. \cite{berger} P215 or \cite{RFV3} P253) 
		\begin{equation}\label{heat_kernel_expansion}
			\widetilde{H}(p,t;p,0)=\frac{1}{(4 \pi t)^{n / 2}}\left(\sum_{j=0}^k t^j \phi_j(p, p)+O\left(t^{k+1}\right)\right)	
		\end{equation}
		with  $\phi_0(p, p)=1$ and $\phi_1(p, p)=\frac{1}{6} R(p)$ and $\phi_2(p,p)=\left(\frac{1}{30} \Delta R+\frac{1}{72} R^2-\frac{1}{180}|\mathrm{Rc}|^2+\frac{1}{180}|\mathrm{Rm}|^2\right)(p)$.  Note that $f_{p, 0}(p, t)\to 0$ as $t\to 0$. 
		Then we
		can conclude from (\ref{heat_kernel_expansion}) that
		$$
		H(p,t;p,0)>\frac{1}{(4 \pi t)^{n / 2}}
		$$
		for $t$  sufficient small if  $R(p)> 0$.  This contradicts to (\ref{heat_kernel_up_best}) and we get  $R(x)\le 0$ for all $x\in V$ since $p$ is arbitrarily choosen on $V$. Moreover, if
		$
		Rc(x)\ge 0
		$
		for all $x\in V$, then $Rc \equiv 0$  on $V$.	
		If $|Rm|(p)\neq 0$, then $\phi_1(p, p)=0$ and $\phi_2(p,p)=\frac{1}{180}|\mathrm{Rm}|^2(p)>0$ since $Rc\equiv 0$ on $V$. Then we
		can conclude from (\ref{heat_kernel_expansion}) that
		$$
		H(p,t;p,0)>\frac{1}{(4 \pi t)^{n / 2}}
		$$
		for $t$  sufficient small if  $|Rm|(p)\neq 0$ and $n\ge 4$. This contradicts to (\ref{heat_kernel_up_best}) and we get $|Rm|\equiv 0$ on $V$ when  $n\ge 4$. If $n\le 3$, the lemma holds obviously. 
	\end{proof}

	We also need  get a Laplacian upper bound of the positive solutions to the conjugate heat equations under the Ricci flow, for which the estimate was proved by Hamilton \cite{H3} for the case of  heat equation closed manifolds.
	\begin{thm}	\label{laplace_estimates}
		Let $g(t)$, $t\in [0,T]$, be the complete solution to the Ricci flow on an $n$-dimensional manifold $M$ with bounded sectional curvature.
		Assume that
		$$
		\max\{|Rc(g(t))|,|\nabla R (g(t))|, |\Delta R (g(t))|\}\le K
		$$
		on $M\times [0,T]$.
		Suppose $u>0$ is a solution of
		$$
		\square^* u=\left(-\partial_t-\Delta+R\right) u=0,
		$$
		and $u \leq A$  on $M\times [0,T]$. Then
		$$
		(T-t)\left(\frac{\Delta u}{u}+\frac{|\nabla u|^2}{u^2}\right) \leq N+Q \log \left(\frac{A}{u}\right) ,
		$$
		where $Q$ and $N$ are the positive constants only depending on $n$, $K$ and $T$.
	\end{thm}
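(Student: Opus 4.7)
The plan is to apply the parabolic maximum principle to a carefully chosen auxiliary quantity, adapting Hamilton's argument \cite{H3} for the heat equation on a closed manifold to the present conjugate heat equation under Ricci flow. After rescaling $u$ by $A$ (which preserves linearity of $\square^*$), I may assume $A = 1$ and set $L := -\log u \ge 0$. A direct computation from $\square^* u = 0$ gives
$$\partial_t L = -\Delta L + |\nabla L|^2 - R,$$
and the identities $\Delta u / u = -\Delta L + |\nabla L|^2$ and $|\nabla u|^2/u^2 = |\nabla L|^2$ reduce the claim to showing that
$$F := (T-t)\bigl(-\Delta L + 2|\nabla L|^2\bigr) - Q L - N \;\le\; 0$$
for suitable constants $Q$ and $N$ depending only on $n$, $K$, $T$.

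The next step is to compute $(\partial_t - \Delta)F$. Using the Ricci flow commutator $\partial_t(\Delta L) = \Delta(\partial_t L) + 2\,\mathrm{Rc}\cdot\nabla^2 L$, the Bochner identity applied to $|\nabla L|^2$, and then substituting the PDE for $L$ itself to eliminate $\Delta(\partial_t L)$, the formally fourth-order terms $\Delta^2 L$ cancel and one arrives at a second-order expression of the schematic form
$$(\partial_t - \Delta) F \;\le\; -\,2(T-t)\,|\nabla^2 L|^2 \;+\; (T-t)\,\mathcal{E}(\nabla L,\mathrm{Rc},\nabla R,\Delta R) \;-\; Q(\partial_t-\Delta)L \;-\; \bigl(-\Delta L + 2|\nabla L|^2\bigr),$$
where $\mathcal{E}$ is a polynomial expression in its arguments controlled by the curvature hypothesis $\max\{|\mathrm{Rc}|,|\nabla R|,|\Delta R|\}\le K$.

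The third step exploits the dominant negative term. By Cauchy--Schwarz, $(\Delta L)^2 \le n|\nabla^2 L|^2$, so $-2(T-t)|\nabla^2 L|^2 \le -\tfrac{2}{n}(T-t)(\Delta L)^2$ provides quadratic control strong enough to absorb the cross terms involving $\Delta L$ after elementary inequalities of the form $2ab \le \varepsilon a^2 + \varepsilon^{-1}b^2$. The gradient estimate of Theorem~\ref{gradient_estimates} supplies the decisive bound $(T-t)|\nabla L|^2 \le C(L+1)$, so every $|\nabla L|^2$ contribution is absorbed into the $QL + N$ subtraction, and the curvature bounds handle the remaining Ricci-flow source terms. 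Choosing $N$ and $Q$ sufficiently large, depending only on $n$, $K$, $T$, I expect to conclude that $(\partial_t - \Delta)F < 0$ wherever $F > 0$.

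The main obstacle is the non-compactness of $M$, which forbids a direct maximum principle. To handle this I would multiply $F$ by a space-time cutoff $\eta\bigl(d_{g(t)}(x,p),t\bigr)$ supported on a large geodesic ball and carrying a penalty that forces $\eta F \to 0$ near $t = T$ and near spatial infinity (using the heat-kernel-type bound on $L$ implicit in Theorem~\ref{gradient_estimates}), and then analyse $\eta F$ at its positive maximum. Perelman's distance-evolution estimate $(\partial_t - \Delta) d_{g(t)}(x,p) \ge -C/\sqrt{t}$ (Lemma~8.3 of \cite{P1}) together with the bounded curvature hypothesis keeps the cutoff-error terms under control. Letting the cutoff radius tend to infinity then yields $F \le 0$ throughout $M\times[0,T)$, which is the claimed estimate.
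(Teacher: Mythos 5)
Your overall strategy—introduce $F=(T-t)(-\Delta L+2|\nabla L|^2)-QL-N$, derive a parabolic inequality via Bochner and the curvature bounds, absorb $(\Delta L)^2$ through $-(T-t)|\nabla^2 L|^2$ and Cauchy--Schwarz, and handle non-compactness with a cutoff—is the same as the paper's. However, the choice of parabolic operator is wrong in a way that breaks the argument. Because the boundary condition $F\le 0$ is known at $t=T$ (where the $(T-t)$ factor vanishes) and must be propagated \emph{backward} in $t$, the correct operator is the conjugate one, $\partial_\tau-\Delta=-\partial_t-\Delta$, supplemented by the transport term $-2\nabla\log u\cdot\nabla=+2\nabla L\cdot\nabla$. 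That is exactly what the paper uses: $L:=\partial_\tau-\Delta-2\nabla\log u\cdot\nabla$. With your operator $\partial_t-\Delta$, the fourth-order terms do \emph{not} cancel: $(\partial_t-\Delta)(-\Delta L)=2\Delta^2 L-\Delta|\nabla L|^2+\Delta R-2\,\mathrm{Rc}\cdot\nabla^2 L$, so your central claim ``the formally fourth-order terms $\Delta^2 L$ cancel'' is false. Moreover, the direction of the maximum principle is wrong: at a first interior zero of $F$ (approached going backward from $t=T$) one has $\partial_t F\le 0$ and $\Delta F\le 0$, so showing $(\partial_t-\Delta)F<0$ gives no contradiction, whereas the paper's $L F<0$ forces $-\partial_t F<\Delta F\le 0$, i.e.\ $\partial_t F>0$, a contradiction.

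The missing transport term matters substantively, not cosmetically: with the conjugate operator alone, a genuinely third-order term $2\nabla L\cdot\nabla\Delta L$ survives (from Bochner applied to $\Delta|\nabla L|^2$); it cannot be absorbed by $|\nabla^2 L|^2$ and $|\nabla L|^4$, and it is killed precisely by $+2\nabla L\cdot\nabla(-\Delta L+2|\nabla L|^2)$. This is Hamilton's trick, which you cite but do not implement. Once the operator is corrected, your other choices diverge mildly from the paper: you invoke Theorem~\ref{gradient_estimates} to bound $(T-t)|\nabla L|^2\le C(L+1)$, whereas the paper instead picks $Q=8KT+4$ and uses the auxiliary hypothesis $F\ge 0$ to convert $-2|\nabla L|^2\le\Delta L-N/(T-t)$ before completing the square; either route plausibly works, but yours adds a dependence on the gradient-estimate constant. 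Finally, your explicit cutoff argument for non-compactness is a useful elaboration that the paper leaves implicit.
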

	\begin{proof}
		Let $\tau=T-t.$
		Let $L \doteqdot \frac{\partial}{\partial \tau}-\Delta-2 \nabla \log u \cdot \nabla$.
		Since $\frac{\partial u}{\partial \tau}=\Delta u-Ru$, we have $\frac{\partial}{\partial \tau} \log u=\Delta \log u+|\nabla \log u|^2-R$. Using the Bochner formula, we compute
		$$
		\begin{aligned}
			\frac{\partial}{\partial \tau}|\nabla \log u|^2= & 2 \nabla \log u \cdot \nabla\left(\Delta \log u+|\nabla \log u|^2-R\right)-2 \operatorname{Rc}(\nabla \log u, \nabla \log u) \\
			= & \Delta|\nabla \log u|^2+2 \nabla \log u \cdot \nabla|\nabla \log u|^2 -2\nabla \log u\cdot \nabla R\\
			& -2|\nabla \nabla \log u|^2-4 \operatorname{Rc}(\nabla \log u, \nabla \log u)
		\end{aligned}
		$$
		which yields
		\begin{equation}\label{q_1}
			L\left(|\nabla \log u|^2\right)=-2|\nabla \nabla \log u|^2 -2\nabla \log u\cdot \nabla R-4 \operatorname{Rc}(\nabla \log u, \nabla \log u) .
		\end{equation}
		We also compute
		$$
		\begin{aligned}
			\frac{\partial}{\partial \tau} \log \left(\frac{A}{u}\right)   =\Delta \log \left(\frac{A}{u}\right)+2 \nabla \log u \cdot \nabla \log \left(\frac{A}{u}\right)+|\nabla \log u|^2+R,
		\end{aligned}
		$$
		which implies
		\begin{equation}\label{q_5}
			L\left(\log \left(\frac{A}{u}\right)\right)=|\nabla \log u|^2+R.	
		\end{equation}
		We also have
		\begin{equation}\label{q_2}
			\begin{aligned}
				\frac{\partial}{\partial \tau}(\Delta \log u)= & \Delta\left(\Delta \log u+|\nabla \log u|^2-R\right) -2R_{ij}\nabla_i\nabla_j \log u\\\
				= & \Delta(\Delta \log u)+2 \nabla \log u \cdot \nabla(\Delta \log u)+2|\nabla \nabla \log u|^2\\
				& +2 \operatorname{Rc}(\nabla \log u, \nabla \log u)  -\Delta R-2R_{ij}\nabla_i\nabla_j \log u.
			\end{aligned}
		\end{equation}
		Hence, combining this with (\ref{q_1}), we have for all $0<a<2$
		$$
		\begin{aligned}
			&\quad	L\left(\Delta \log u+2|\nabla \log u|^2\right) \\
			& =-2|\nabla \nabla \log u|^2-6 \operatorname{Rc}(\nabla \log u, \nabla \log u)-4\nabla \log u\cdot \nabla R  -\Delta R-2R_{ij}\nabla_i\nabla_j \log u\\
			& \leq-\frac{2-a}{n}(\Delta \log u)^2+8K|\nabla \log u|^2+3K+\frac{K^2}{a} ,
		\end{aligned}
		$$
		where we used $4\nabla \log u\cdot \nabla R\le 2K+2K|\nabla \log u|^2$
		and $-2R_{ij}\nabla_i\nabla_j \log u\le a|\nabla\nabla \log u|^2+\frac{K^2}{a}$.
		Now let
		$$
		F \doteqdot \tau\left(\Delta \log u+2|\nabla \log u|^2\right)-N-Q \log \left(\frac{A}{u}\right) ,
		$$
		where $Q=8KT+4$ and $N$ is a positive constant which  will be determined later.
		we shall show that $L F \leq 0$ wherever $F \geq 0$.
		We have
		$$
		L F \leq-\frac{(2-a) \tau}{n}(\Delta \log u)^2+\Delta \log u-2|\nabla \log u|^2 +(3K+\frac{K^2}{a})\tau+QK.
		$$
		If $F \geq 0$, then
		$$
		\begin{gathered}
			-2|\nabla \log u|^2 \leq \Delta \log u-\frac{N}{\tau} .
		\end{gathered}
		$$
		This implies when  $F \geq 0$
		$$
		\begin{aligned}
			L F &\leq -\frac{(2-a) \tau}{n}(\Delta \log u)^2+2\Delta \log u +(3K+\frac{K^2}{a})\tau+QK-\frac{N}{\tau} \\
			&= -\frac{(2-a) \tau}{n}\left(\Delta \log u-\frac{n}{(2-a)\tau}\right)^2+\frac{n}{(2-a)\tau}+(3K+\frac{K^2}{a})\tau+QK-\frac{N}{\tau}<0 ,
		\end{aligned}
		$$
		if $N>\frac{n}{(2-a)}+(3K+\frac{K^2}{a})T^2+QKT$. Then Theorem \ref{laplace_estimates} follows from the maximum principle.
		
	\end{proof}
	We also need an opposite estimate of Lemma \ref{key_2}.
	\begin{lem}\label{key_11}
		Under the assumptions of Lemma \ref{key_2},			
		we have for any $b>0$ the following estimate holds
		\begin{equation}\label{key_3_main}
			\begin{aligned}
				\frac{d}{d t} \int_M \tau R h u d V_t \le &  b  \int_M \tau\left(|\operatorname{Rc}|^2+(2n+1)|\nabla f|^4+2n^2\left|\nabla^2 f\right|^2\right) h u d V_t  \\
				& +3b^{-1}  \int_M \tau\left|\operatorname{Rc}+\nabla^2 f-\frac{1}{2 \tau} g\right|^2 h u d V_t \\
				& -  \int_M\left(\tau\left(-|\nabla f|^2+\triangle f\right)+f-\frac{n}{2}\right)\left(|\nabla f|^2-\Delta f\right) h u d V_t \\
				& -\tau \int_M\left( 2\langle \nabla R,\nabla h \rangle+R\Delta h\right) u d V_t +2 \tau\int_M \operatorname{Rc}(\nabla f, \nabla h)u d V_t \\
				& -\int_M\left(-\tau\left(|\nabla f|^2+R\right)+f \right)\langle \nabla f, \nabla h \rangle u d V_t,
			\end{aligned}
		\end{equation}
		where $\tau=T-t$ and $	f=-\frac{n}{2} \log \left(4 \pi \tau\right)-\log u$.
	\end{lem}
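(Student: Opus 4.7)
The plan is to mirror the proof of Theorem~\ref{key_2}, reversing each application of AM-GM so that every intermediate estimate yields an upper rather than a lower bound on $\tau\,\frac{d}{dt}\int_M \tau R h u\,dV_t$. I would start from the same identity as in (\ref{p_1}), derived from $(\partial_t-\Delta)R=2|\operatorname{Rc}|^2$ together with the conjugate heat equation for $u$, and split $2\tau|\operatorname{Rc}|^2-R$ as $2\tau\operatorname{Rc}\cdot\bigl(\operatorname{Rc}+\nabla^2 f-\frac{1}{2\tau}g\bigr) - 2\tau\operatorname{Rc}\cdot\nabla^2 f$. Applying AM-GM in the reverse direction to the first piece immediately gives the $b\tau|\operatorname{Rc}|^2+b^{-1}\tau|\operatorname{Rc}+\nabla^2 f-\frac{1}{2\tau}g|^2$ upper bound seen in (\ref{key_3_main}).

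For the second piece I would run the same two integrations by parts used in (\ref{est_2})---first via the contracted second Bianchi identity $\operatorname{div}\operatorname{Rc}=\frac{1}{2}\nabla R$ (producing the gradient $\nabla[-\tau(|\nabla f|^2+R)+f]$), then transferring the remaining derivative onto $f$ through $\nabla u=-u\nabla f$---and decompose
\[
-\tau(|\nabla f|^2+R)+f \;=\; A \;-\; \tau\operatorname{tr}\!\left(\operatorname{Rc}+\nabla^2 f-\frac{1}{2\tau}g\right),\qquad A:=\tau(-|\nabla f|^2+\Delta f)+f-\frac{n}{2}.
\]
Reversing the AM-GM inequalities applied to $2\tau^2\int(\operatorname{Rc}+\nabla^2 f-\frac{1}{2\tau}g)(\nabla f,\nabla f)\,hu\,dV_t$ and to the trace piece, and using the pointwise bound $(|\nabla f|^2-\Delta f)^2\le 2|\nabla f|^4+2n|\nabla^2 f|^2$, produces exactly the coefficients $b(2n+1)\tau|\nabla f|^4$, $2n^2 b\tau|\nabla^2 f|^2$, and the combined $3b^{-1}\tau|\operatorname{Rc}+\nabla^2 f-\frac{1}{2\tau}g|^2$ on the right-hand side of (\ref{key_3_main}), while the three boundary-type contributions come out in the same form as in (\ref{key_2_main}).

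The one substantive structural departure from the proof of Theorem~\ref{key_2}---and the step I expect to require the most care---is how one handles the residual term $\tau\int A\,(|\nabla f|^2-\Delta f)\,hu\,dV_t$. In (\ref{est_3}) the author rewrote this via $\tau(|\nabla f|^2-\Delta f)=-A+(f-\frac{n}{2})$ as $-\int A^2\,hu\,dV_t + \int A(f-\frac{n}{2})hu\,dV_t$ and then discarded the non-positive quadratic $-\int A^2\,hu\,dV_t$, which is legitimate precisely because dropping a non-positive term weakens the lower bound being sought. For the upper bound pursued here, discarding $-\int A^2\,hu\,dV_t$ would go in the wrong direction, so I would simply leave $-\tau\int A(|\nabla f|^2-\Delta f)\,hu\,dV_t$ untouched; after dividing the whole inequality by $\tau$ this becomes the term $-\int\bigl(\tau(-|\nabla f|^2+\Delta f)+f-\frac{n}{2}\bigr)(|\nabla f|^2-\Delta f)\,hu\,dV_t$ appearing in (\ref{key_3_main}). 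This single change of strategy in the final rearrangement is the sole reason the statement of Lemma~\ref{key_11} differs in shape from Theorem~\ref{key_2}; no genuinely new analytic obstacle arises beyond bookkeeping the signs through each reversed AM-GM.
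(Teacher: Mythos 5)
Your proposal is correct and follows essentially the same route as the paper: the paper's proof of Lemma~\ref{key_11} starts from the identity (\ref{p_1}), flips each AM-GM to get an upper bound (yielding (\ref{est_101})--(\ref{est_103}) as mirror images of (\ref{est_1})--(\ref{est_3})), and, exactly as you anticipate, the one genuine modification is that the term $\tau\int_M\bigl(\tau(-|\nabla f|^2+\Delta f)+f-\tfrac{n}{2}\bigr)(|\nabla f|^2-\Delta f)\,hu\,dV_t$ is left untouched rather than being rewritten through $\tau(|\nabla f|^2-\Delta f)=-A+(f-\tfrac n2)$ and having $-\int A^2\,hu\,dV_t$ discarded. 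Your coefficient accounting for the $b$ and $b^{-1}$ terms also matches the paper's.
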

	\begin{proof}
		We get from (\ref{p_1}) that
		for any $b>0$,
		\begin{equation}\label{est_101}
			\begin{aligned}
				&\ \ \ \	\tau  \frac{d}{d t} \int_M \tau R h u d V_t \\
				& \le b \tau^2 \int_M|\operatorname{Rc}|^2 h u d V_t+b^{-1} \tau^2 \int_M\left|\operatorname{Rc}+\nabla^2 f-\frac{1}{2 \tau} g\right|^2 h u d V_t\\
				&\ \ -2 \tau^2 \int_M \operatorname{Rc} \cdot \nabla^2 f h u d V_t-\tau^2 \int_M\left( 2\langle \nabla R,\nabla h \rangle+R\Delta h\right) u d V_t.
			\end{aligned}
		\end{equation}	
		Similar as (\ref{est_2}), we have
		\begin{equation}\label{est_102}
			\begin{aligned}
				&\ \ \ \	2 \tau^2 \int_M \operatorname{Rc} \cdot \nabla^2 f h u d V_t\\
				\ge & -b \tau^2 \int_M|\nabla f|^4 h u d V_t-b^{-1} \tau^2 \int_M\left|\operatorname{Rc}+\nabla^2 f-\frac{1}{2 \tau} g\right|^2 h u d V_t \\
				& +\tau \int_M \nabla\left(-\tau\left(|\nabla f|^2+R\right)+f \right) \cdot \nabla f h u d V_t
				-2 \tau^2 \int_M \operatorname{Rc}(\nabla f, \nabla h)u d V_t ,
			\end{aligned}
		\end{equation}
		We modify the calculations in  (\ref{est_3}) as
		\begin{equation}\label{est_103}
			\begin{aligned}
				&\ \ \ \  \tau \int_M \nabla(-\left.\tau\left(|\nabla f|^2+R\right)+f \right) \cdot \nabla f h u d V_t\\
				&=-\tau^2 \int_M\left(\operatorname{Rc}+\nabla^2 f-\frac{1}{2 \tau} g\right) \cdot\left(\left(|\nabla f|^2-\triangle f\right) g\right) h u d V_t \\
				&\ \ \ \ +\tau \int_M\left(\tau\left(-|\nabla f|^2+\triangle f\right)+f-\frac{n}{2}\right)\left(|\nabla f|^2-\Delta f\right) h u d V_t\\
				&\ \ \ \ +\tau \int_M\left(-\tau\left(|\nabla f|^2+R\right)+f \right)\langle \nabla f, \nabla h \rangle u d V_t \\
				& \ge -2 n \tau^2 b \int_M|\nabla f|^4 h u d V_t-2 n^2 \tau^2 b \int_M\left|\nabla^2 f\right|^2 h u d V_t-\tau^2 b^{-1} \int_M\left|\operatorname{Rc}+\nabla^2 f-\frac{1}{2 \tau} g\right|^2 h u d V_t \\
				& \quad  +\tau \int_M\left(\tau\left(-|\nabla f|^2+\triangle f\right)+f-\frac{n}{2}\right)\left(|\nabla f|^2-\Delta f\right) h u d V_t \\
				& \quad 	+\tau \int_M\left(-\tau\left(|\nabla f|^2+R\right)
				+f \right)\langle \nabla f, \nabla h \rangle u d V_t.
			\end{aligned}
		\end{equation}
		Then (\ref{key_3_main})	holds by
		combining (\ref{est_101}), (\ref{est_102}) and (\ref{est_103}).	
	\end{proof}

	Now we give the proof of Theorem \ref{rigidity_mu}.
	
	\begin{proof}[Proof of Theorem \ref{rigidity_mu}]
		We use the same notations as the proof Theorem  \ref{rigidity_2}.
		The proof of Theorem \ref{rigidity_mu} is similar to Theorem  \ref{rigidity_2} and we only need modify little as the follows.
		Firstly, (\ref{wge0}) still holds since for $T\le \tau_0$
		\begin{equation}\label{wge02}
			\begin{aligned}
				\mathcal{W}(M, g(0), \tilde{u}^{\frac{1}{2}}(0),1)
				&= \mathcal{W}(B_{h_0}(p,r_0), \bar{g}(0), \tilde{u}^{\frac{1}{2}}(0),T)\\
				&\ge \boldsymbol{\nu}(V, h_0, \tau_0)\\
				&\ge 0.
			\end{aligned}
		\end{equation}
		And the estimates (\ref{m_1})-(\ref{m_8}) also hold by the same calculates as in the the proof Theorem  \ref{rigidity_2}. Note that we need $R\ge 0$ at $t=0$ to get (\ref{bbq}) in the proof of Theorem \ref{rigidity}. Instead of (\ref{bbq}), we use Lemma \ref{key_3_main} to do an opposite estimate for (\ref{scalar_upper_bound}).
		By Lemma \ref{laplace_estimates}, Theorem \ref{gradient_estimates} and (\ref{heat_kernel_estimates_1}), we have
		\begin{equation}\label{last_111}
			\Delta f\ge -\frac{c_4(f+1)}{\tau}.
		\end{equation}
		And we get from the Theorem \ref{Harnack}, Theorem \ref{gradient_estimates} and (\ref{heat_kernel_estimates_1}) that
		\begin{equation}\label{last_222}
			\Delta f\le \frac{c_5(f+1)}{\tau}.
		\end{equation}
		
		Since $h=0$ outside $B_{g(0)}(p,8A)$ and by  (\ref{heat_kernel_estimates_1}), (\ref{conclude}), (\ref{last_111}) and (\ref{last_222}), we
		have for $A\le  \frac{1}{20}T^{-\frac{1}{2}}r_0$, $A$ large enough and  any $0<t_1<t_2<1$
		\begin{equation*}
			\begin{aligned}
				&\quad
				\int^{t_2}_{t_1}\int_M\left(\tau\left(-|\nabla f|^2+\triangle f\right)+f-\frac{n}{2}\right)\left(|\nabla f|^2-\Delta f\right) h u d V_t  dt\\
				\le &
				\int^{t_2}_{t_1}\int_M \left|\tau\left(-|\nabla f|^2+\triangle f\right)+f-\frac{n}{2}\right| \frac{c(f+1)}{\tau}h u d V_t
				\\
				\le& 	c_2A^2\int^{t_2}_{t_1}\int_M  \tau^{-2}\left(\tau\left(-|\nabla f|^2+\triangle f\right)+f-\frac{n}{2}\right)  h u d V_t
				\\
				\le& c_2A^2 (1-t_2)^{-2}\int^{t_2}_{t_1}\int_M  \left(\tau\left(-|\nabla f|^2+\triangle f\right)+f-\frac{n}{2}\right)  h u d V_t\\
				\le& cA^2 (1-t_2)^{-2} e^{-cA^2}\le c' (1-t_2)^{-2} e^{-c'A^2}.
			\end{aligned}
		\end{equation*}
		for some $c''<c$ and large enough $A$. For the season of simplicity, we still use $c$ instead of $c''$ and
		\begin{equation}\label{p_5}
			\begin{aligned}
				\int^{t_2}_{t_1}	\int_M\left(\tau\left(-|\nabla f|^2+\triangle f\right)+f-\frac{n}{2}\right)\left(|\nabla f|^2-\Delta f\right) h u d V_t dt\le c (1-t_2)^{-2} e^{-cA^2}.
			\end{aligned}
		\end{equation}
		
		Combing with (\ref{m_1})-(\ref{m_8}), (\ref{p_5}), Lemma \ref{key_11} and take $t_2=1-e^{-\frac{c}{4} A^2}$ and $b=e^{-\frac{c}{2} A^2}$ in (\ref{key_3_main}),
		we get for any $0<t_1\le t_2$, $A\le  \frac{1}{20}T^{-\frac{1}{2}}r_0$ and large enough $A$
		\begin{equation*}
			\begin{aligned}
				&\left. \int_M (1-t) R h u d V_t \right|_{t=t_1}\\
				 \ge & 	\left. \int_M (1-t) R h u d V_t \right|_{t=t_2}-c e^{-c A^2}\left((1-t_2)^{-2}+b^{-1}+1\right)+cb\left((1-t_2)^{-1}+1\right)\\
				\ge &-ce^{-\frac{c}{4} A^2}.
			\end{aligned}
		\end{equation*}
		Then for $A\le  \frac{1}{20}T^{-\frac{1}{2}}r_0$ and large enough $A$, we  get that
		\begin{equation}\label{R_lower_bound}
			\begin{aligned}
				\left. \int_M  R h u d V_t \right|_{t=0} \ge -2ce^{-\frac{c}{4} A^2}.
			\end{aligned}
		\end{equation}	
		Combining with (\ref{R_lower_bound}) and (\ref{R_upper_bound}) and by the same estimates as (\ref{R_estimates}) and (\ref{bbq}), we still can conclude 
		that
		\begin{equation}
			\begin{aligned}
				\int^{\frac{1}{2}}_{\frac{1}{4}}	\int_M 2|Rc|^2 u h d V_tdt\le 3ce^{-\frac{c}{4} A^2},
			\end{aligned}
		\end{equation}
		Hence the rest proof is the same as the proof of Theorem \ref{rigidity} and we still have that $V$ is Ricci flat. Since $Rc\equiv 0$ on $V$, 
		the condition (\ref{nu_assumption}) becomes the following:
		$$
		\int_V \varphi^2 \log \varphi^2 d vol \le \int_{V}4\tau |\nabla \varphi|^2 d vol-n-\frac{n}{2} \log (4 \pi \tau),
		$$
		for all $\varphi\in\mathcal{S}(V)$ and $\tau<\tau_0$. Then Theorem \ref{rigidity_mu} follows from Lemma \ref{Ricci_flat_flat}.
	\end{proof}

	Finally, we give the proof of  Theorem \ref{rigidity_log_sobolev}.
	\begin{proof}[Proof of Theorem \ref{rigidity_log_sobolev}]
	We get from (\ref{positive_scalar2}) and Lemma \ref{Ricci_flat_flat}
		that  $R\equiv 0$ on $V$.
		 Combining (\ref{log_sobolev}) and $R\equiv 0$ on $V$, we conclude that (\ref{nu_assumption}) holds. Then Theorem \ref{rigidity_log_sobolev}	follows from Theorem \ref{rigidity_mu} directly.
	\end{proof}
	
	\section{the  rigidity theorems for the cases $R\ge -n(n-1)$ and $R\ge n(n-1)$}
	
	In this section  we give the proofs of Theorem \ref{rigidity_Iso_Hn} and Theorem \ref{rigidity_Sn}.
	
	\begin{proof}[Proof of Theorem \ref{rigidity_Iso_Hn}]
		Firstly, under the conformal change $h=\left(\cosh{\frac{d_g(p,x)}{2}}\right)^{-4}g$, we have 
		\begin{equation}\label{lowerbound_R_h}
			\begin{aligned}
				&R_h(x)\\
				=&\left(\cosh{\frac{d_g(p,x)}{2}}\right)^{4}\left\{R_g(x)+(n-1)+2(n-1)\Delta_g d_g(p,x)\tanh{\frac{d_g(p,x)}{2}}-(n-1)^2\left(\tanh{\frac{d_g(p,x)}{2}}\right)^{2}\right\}\\
				\ge& \left(\cosh{\frac{d_g(p,x)}{2}}\right)^{4}\left\{-(n-1)^2+2(n-1)\Delta_g d_g(p,x)\tanh{\frac{d_g(p,x)}{2}}-(n-1)^2\left(\tanh{\frac{d_g(p,x)}{2}}\right)^{2}\right\}
			\end{aligned}	
		\end{equation}
		when $d_g(p,x)$ is smooth at $x\in  B_g(p,r_0)$
		(cf. P192 in \cite{STbook}).
		Note that (\ref{iso_Hn}) implies that
		\begin{equation}\label{volume_lower_bound}
			\frac{\operatorname{Vol_h}(B_h(x,r))}{r^n}\ge \omega_n
		\end{equation} 
		for all $x\in  B_g(p,r_0)$ and $r$ sufficient small.
		It follows from the expansion for volumes of geodesic balls
		$$\operatorname{Vol_h}(B_h(x, r))=\omega_n r^n\left(1-\frac{R_h(x)}{6(n+2)} r^2+O\left(r^3\right)\right)$$
		that $R_h(x)\le 0$ for all  $d_g(p,x)$ is smooth at $x\in B_g(p,r_0)$. Then we can conclude from (\ref{lowerbound_R_h}) that for a.e $x\in B_g(p,r_0)$
		\begin{equation*}
			\Delta_g d_g(p,x)\le (n-1)\coth d_g(p,x).
		\end{equation*}
		Then for $r\le r_0$
		$$
		\frac{d}{d r} Area_g(\partial B_g(p,r))=\int_{\partial B_g(p,r)}\Delta_g d_g\le (n-1)Area_g(\partial B_g(p,r))\coth r, 
		$$
		and 
		it follows that
		$$
		Area_g(\partial B_g(p,r))\le n\omega_n \left(\sinh r\right)^{n-1}.
		$$
		So we have for $r_h=2\tanh \frac{r}{2}$ and $r\le r_0$
		$$
		\begin{aligned}
			Area_h(\partial B_h(p,r_h))=&\left(\cosh{\frac{r}{2}}\right)^{-2(n-1)}Area_g(\partial B_g(p,r))\\
			\le&  n\omega_n\left(\cosh{\frac{r}{2}}\right)^{-2(n-1)} \left(\sinh r\right)^{n-1}\\
			=& n\omega_n  \left(2\tanh{\frac{r}{2}}\right)^{n-1}\\
			=& n\omega_n  r_h^{n-1},
		\end{aligned}
		$$
		and combing this with (\ref{iso_Hn})
		we have $\frac{\operatorname{Vol_h}(B_h(p,r))}{r^n}= \omega_n$ when $r\le 2\tanh \frac{r_0}{2}$. Then we conclude that  $
		\Delta_g d_g(p,x)= (n-1)\coth d_g(p,x)$ in distribution sense in $B_g(p,r_0)$ and $d_g$ is $C^{\infty}$. So we get from (\ref{lowerbound_R_h}) that
		$R_h\equiv 0$ in   $B_h(p,2\tanh \frac{r_0}{2})$. Then  $B_h(p,2\tanh \frac{r_0}{2})$ is flat by
		Theorem \ref{rigidity}.  Hence  $B_g(p,r_0)$ must be isometric to  the $r_0$-ball in hyperbolic space with its sectional curvature equals to $-1$ since $B_g(p,r_0)$ is conformal to $B_h(p,2\tanh \frac{r_0}{2})$ with $h=\left(\cosh{\frac{d_g}{2}}\right)^{-4}g$.
	\end{proof}

		\begin{proof}[Proof of Theorem \ref{rigidity_Sn}]
		(i)
		Firstly, under the conformal change $h=\left(\cos{\frac{d_g}{2}}\right)^{-4}g$
		\begin{equation}\label{R_h_Sn}
			\begin{aligned}
				&R_h(x)\\
				=&	\left(\cos{\frac{d_g(p,x)}{2}}\right)^{4}&\left\{R_g(x)-(n-1)-2(n-1)\Delta_g d_g(p,x)\tan{\frac{d_g(p,x)}{2}}-(n-1)^2\left(\tan{\frac{d_g(p,x)}{2}}\right)^{2}\right\}\\
				\ge&	\left(\cos{\frac{d_g(p,x)}{2}}\right)^{4}&\left\{(n-1)^2-2(n-1)\Delta_g d_g(p,x)\tan{\frac{d_g(p,x)}{2}}-(n-1)^2\left(\tan{\frac{d_g(p,x)}{2}}\right)^{2}\right\}
			\end{aligned}	
		\end{equation}
		for all $x\in V_p\cap \operatorname{inj}_g(p)$
		(cf. P192 in \cite{STbook}). By
		Lemma \ref{Ricci_flat_flat} we  have $R_h(x)\le 0$ for all $x\in V_p\cap \operatorname{inj}_g(p)$ if (\ref{LSI_rigidity_2}) holds. Then  we conclude from (\ref{R_h_Sn}) that
		\begin{equation}\label{Delta_upper_bound}
			\Delta_g d_g(p,x)\ge (n-1)\cot d_g(p,x)
		\end{equation}
	 for all $x\in V_p\cap \operatorname{inj}_g(p)$.
		Denote $\{x^k\}^n_{k=1}$ be the normal geodesic coordinates centered at $p$ with respect to metric $g$. Recall that in the geodesic normal coordinates $\{x^k\}^n_{k=1}$ (see formula (3.4) on p. 211 of \cite{STbook})
	\begin{equation}\label{expansion_g_kl}
		g_{k \ell}(x)=\delta_{k \ell}-\frac{1}{3} R_{k p q \ell} x^p x^q+O\left(d_g(p,x)^3\right)
	\end{equation}	
		$\operatorname{det}\left(g_{k \ell}(x)\right) $ has the following expansion near $p$ (see Lemma 3.4 on p. 210 of \cite{STbook})
		$$
		\begin{aligned}
			\operatorname{det}\left(g_{k \ell}(x)\right) &  =1-\frac{1}{3} R_{ij}(p) x^i x^j +O\left(d_g(p,x)^3\right),
		\end{aligned}
		$$
		where $R_{k p q \ell}$ and $R_{ij}$ denote the curvature tensor and the Ricci curvature of $g$.
		And hence we have
		\begin{equation}\label{expansion_Delta_dg}
			\begin{aligned}
				\left(\Delta_g d_g\right)(x)&=\left.\frac{\partial}{\partial r}\log \left( r^{n-1} \sqrt{\operatorname{det}^S g_{i j}}\right)\right|_{r=d_g(p,x)}\\
				&=	\left.\frac{n-1}{d_g(p,x)}+\frac{1}{2}\frac{\partial}{\partial r}\log \left\{ 1-\frac{1}{3} R_{ij}(p) x^i x^j  +O\left(d_g(p,x)^3\right)\right\}\right|_{r=d_g(p,x)}\\
				&=\frac{n-1}{d_g(p,x)}-\frac{1}{3 d_g(p,x)} R_{ij}(p) x^i x^j+O\left(d_g(p,x)^2\right),
			\end{aligned}
		\end{equation}
		since $ \frac{\partial x^i}{\partial r}=\frac{x^i}{r}$, where $ g^S_{i j}$ are the components of the metric in geodesic spherical coordinates  at $p$ of the geodesic sphere $B_g(p, 1)$.
		Note that
		\begin{equation}\label{expansion_cot}
			\cot{d_g(p,x)}=\frac{1}{d_g(p,x)}-\frac{d_g(p,x)}{3}+O\left(d_g(p,x)^3\right).
		\end{equation}	
		Combing with (\ref{Delta_upper_bound}), (\ref{expansion_Delta_dg}) and (\ref{expansion_cot}), we get that
		\begin{equation}\label{Rc_lower_bound}
			Rc_g(p)\le (n-1)g(p).
		\end{equation}
		Then we have $Rc_g(p)=(n-1)g$ since $R_g(p)\ge n(n-1)$.
		Under the conformal change $h=\left(\cos{\frac{d_g}{2}}\right)^{-4}g$, by the directly computations we get at $p$
		\begin{equation}\label{change_of_sec}
			sec_h(p)=sec_g(p)-1,
		\end{equation}
	and hence $R_h(p)=0$ and $Rc_h(p)=0$. Moreover, since $R_g$ achieves its minimum at $p$, $\nabla R_g(p)=0$ and $\Delta_g R_g(p)\ge 0$. It follows from (\ref{expansion_Delta_dg}) that
	\begin{equation}\label{expansion_R_h11}
		\begin{aligned}
			R_h(x)=&\left(\cos{\frac{d_g(x)}{2}}\right)^{4}\left\{R_g(x)-(n-1)-2(n-1)\Delta_g d_g(p,x)\tan{\frac{d_g(p,x)}{2}}-(n-1)^2\left(\tan{\frac{d_g(p,x)}{2}}\right)^{2}\right\}\\
			=&\frac{1}{2}\nabla_i\nabla_j R_g(p)x_ix_j+\frac{n-1}{3}
			R_{ij}(p) x^i x^j-\frac{(n-1)^2}{3}d_g(x)^2+O\left(d_g(x)^3\right),
		\end{aligned}	
	\end{equation}
where we used the expansion $\tan(y)=y+\frac{y^3}{3}+O(y^5)$.
Notice that the Christoffel symbols satisfies that $(\Gamma_g)^k_{ij}=O\left(d_g(p,x)\right)$ by (\ref{expansion_g_kl}) and also $(\Gamma_h)^k_{ij}=O\left(d_g(p,x)\right)$.	Then we have $\Delta_h =\Delta_g$ at $p$ and by (\ref{expansion_R_h11})
	\begin{equation*}
		\Delta_h R_h(p)=\Delta_g R_h(p)=\sum\limits_{p}\frac{\partial^2 R_h}{\partial\left(x^p\right)^2}(p)=\Delta_gR_g(p)+\frac{2(n-1)}{3}
		R_g(p)-\frac{2n(n-1)^2}{3}=\Delta_gR_g(p)\ge 0.
	\end{equation*}
Then by (\ref{LSI_rigidity_2}), (\ref{heat_kernel_up_best}) and the heat kernel expansion (\ref{heat_kernel_expansion}), we can use the same arguments in Lemma \ref{Ricci_flat_flat} to get the second term of the heat kernel expansion 
$\phi_2(p,p)=\left(\frac{1}{30} \Delta_h R_h+\frac{1}{72} R_h^2-\frac{1}{180}|\mathrm{Rc_h}|^2+\frac{1}{180}|\mathrm{Rm_h}|^2\right)(p)\le 0$. Hence $|\mathrm{Rm_h}|(p)=0$ since we have $R_h(p)=0$, $Rc_h(p)=0$ and $\Delta_h R_h(p)\ge0$. Then Theorem \ref{rigidity_Sn} (i) follows from (\ref{change_of_sec}).
		
		(ii) Note that we have $R_h(x)\le 0$ for all $x\in V_p\cap \operatorname{inj}_g(p)$ if (\ref{LSI_rigidity_2}) holds by Lemma \ref{Ricci_flat_flat}. Then by the same arguments as in Theorem \ref{rigidity_Iso_Hn}, we have 
		\begin{equation}\label{_Delta_upper}
			\Delta_g d_g(p,x)\le (n-1)\coth d_g(p,x)
		\end{equation}
		for all $x\in V_p\cap \operatorname{inj}_g(p)$. Under the conformal change $h=\left(\cosh{\frac{d_g}{2}}\right)^{-4}g$, by the directly computations we get at $p$
		\begin{equation}\label{change_orf_sec}
			sec_h(p)=sec_g(p)+1.
		\end{equation}
		Hence $R_g(p)=R_h(p)-n(n-1)\le -n(n-1)$ and  $R_g(p)=-n(n-1)$ by (\ref{positive_scalar_cuvature11}). Moreover, we still have $\nabla R_g(p)=0$ and $\Delta_g R_g(p)\ge 0$ since $R_g$ achieves its minimum at $p$. 	Note that
		\begin{equation}\label{expansion_coth}
			\coth{d_g(p,x)}=\frac{1}{d_g(p,x)}+\frac{d_g(p,x)}{3}+O\left(d_g(p,x)^3\right).
		\end{equation}	
	Combining this with (\ref{expansion_Delta_dg}) and (\ref{_Delta_upper}), we conclude that  $Rc_g(p)\ge -(n-1)g(p)$ and hence $Rc_g(p)= -(n-1)g(p)$.  It follows from (\ref{expansion_Delta_dg}) and (\ref{lowerbound_R_h}) that
	\begin{equation}\label{expansion_R_h111}
		\begin{aligned}
			R_h(x)
			=&\frac{1}{2}\nabla_i\nabla_j R_g(p)x_ix_j-\frac{n-1}{3}
			R_{ij}(p) x^i x^j-\frac{(n-1)^2}{3}d_g(x)^2+O\left(d_g(x)^3\right),
		\end{aligned}	
	\end{equation}
	where we used the expansion $\tanh(y)=y-\frac{y^3}{3}+O(y^5)$.
	  The rest of the proof of Theorem \ref{rigidity_Sn} (ii) is similar to Theorem \ref{rigidity_Sn} (i).
	\end{proof}

	\textbf{Acknowledgement}: We would like to thank Prof. Gilles Carron for useful comments of our paper.


\begin{thebibliography}{99}
	
	
	\bibitem{RB3} R. Bamler. \emph{Structure theory of non-collapsed limits of Ricci flows}. ArXiv preprint. arXiv:2009.03243.
	
	\bibitem{BCL}
	Bakry, D., Concordet, D., and Ledoux, M. \textit{Optimal heat kernel bounds under logarithmic Sobolev inequalities}, 
	ESAIM Probab. Statist., 1,391-407, (1995/97).
	
	\bibitem{Be}
	A.L. Besse. \textit{Einstein Manifolds}. Classics in Mathematics. Springer,  2007 .
	
	\bibitem{berger}
	Berger, M., Gauduchon, P., Mazet, E., Berger, M., Gauduchon, P.,  Mazet, E. (1971). Le spectre d'une vari\'{e}t\'{e} riemannienne (pp. 141-241). Springer Berlin Heidelberg.
	
	\bibitem{BBG}
	Pierre H. B\'{e}rard, G\'{e}rard Besson, and Sylvestre F. L. Gallot, \textit{Sur une in\'{e}galit\'{e} isop\'{e}rim\'{e}trique qui g\'{e}n\'{e}ralise celle de Paul L\'{e}vy - Gromov}, Invent. Math. 80 (1985), no. 2, 295-308.
	
	\bibitem{Br1}	 
	H. Bray, \textit{The Penrose inequality in general relativity and volume comparison theorems
		involving scalar curvature}, PhD thesis, Stanford University (1997)
	
	\bibitem{Br2}	
	H. Bray, S. Brendle, M. Eichmair, and A. Neves, \textit{Area-minimizing projective planes
		in three-manifolds}, Comm. Pure Appl. Math. 63, 1237-1247 (2010)
	
	\bibitem{Br3}	
	H. Bray, S. Brendle, and A. Neves, \textit{Rigidity of area-minimizing two-spheres in three-manifolds}, Comm. Anal. Geom. 18, 821-830 (2010)
	
	\bibitem{BKT}
	Balogh, Zolt\'{a}n M; Krist\'{a}ly, Alexandru; Tripaldi, Francesca
	Sharp log-Sobolev inequalities in CD(0,N) spaces with applications.J. Funct. Anal.286(2024), no.2, Paper No. 110217, 41 pp
	
	\bibitem{Brendle2}
 Brendle, 	S.
	\textit{	Sobolev inequalities in manifolds with nonnegative curvature.} Comm.Pure. Appl. Math.76(2023), no.9, 2192–2218
	
	\bibitem{Brendle}
	
	Brendle, S.
	\textit{Rigidity phenomena involving scalar curvature}. Surveys in differential geometry. Vol. XVII, 179-202.
	Surv. Differ. Geom., 17
	
	\bibitem{BM}
	
	Brendle, S.,  Marcques, F. C. (2011).\textit{ Scalar curvature rigidity of geodesic balls in $ S^ n$.} Journal of Differential Geometry, 88(3), 379-394.
	
	\bibitem{BMN}
	S. Brendle, F.C. Marques, and A. Neves,\textit{ Deformations of the hemisphere that increase scalar curvature}, Invent. Math. 185, 175-197 (2011)
	
	\bibitem{CH}
	P.T. Chru\'{s}ciel and M. Herzlich, \textit{The mass of asymptotically hyperbolic Riemannian manifolds}, Pacific J. Math. 212, 231-264 (2003)
	
	\bibitem{CN}
	P.T. Chru\'{s}ciel and G. Nagy, \textit{The mass of spacelike hypersurfaces in asymptotically anti-de-Sitter spacetimes}, Adv. Theor. Math. Phys. 5, 697-754 (2001)
	
	\bibitem{CM}
	
	F. Cavalletti and A. Mondino, \textit{Sharp and rigid isoperimetric inequalities in metricmeasure spaces with lower Ricci curvature bounds}, Invent. Math. 208, 803–849 (2017)
	
	\bibitem{CTY}
	Albert Chau, Luen-Fai Tam, and Chengjie Yu.\emph{ Pseudolocality for the Ricci flow and applications.}
	Canad. J. Math., 63(1):55-85, 2011.
	
	\bibitem{cheng}
	Liang Cheng,\textit{
		Pseudolocality theorems of Ricci flows on incomplete manifolds},
	ArXiv preprint. arXiv:2210.15397
	
	\bibitem{RFV3}
	Chow, B.; Chu, S.; Glickenstein, D.; Guenther, C.; Isenberg, J.; Ivey, T.; Knopf, D.; Lu, P.; Luo, F.; Ni, L. \emph{The Ricci flow: techniques and applications. Part III. Geometric-Analytic Aspects}, Mathematical Surveys and Monographs,  vol.163, AMS, Providence, RI, 2010.
	
	\bibitem{Davies}
	Davies,E.B. Heat kernel and Spectral Theory, Cambridge University Press, 1989
	
	\bibitem{MN}
	Fernando C. Marques and Andre Neves. \textit{Rigidity of min-max minimal
	spheres in three manifolds.} 
	Duke Math.J.,161(14):2725-2752, 2012.	
	
	\bibitem{G}
	Mikhael Gromov,	\textit{Metric structures for Riemannian and non-Riemannian spaces},
	Birkhauser Boston Inc., Boston, MA, 1999, Based on the 1981 French original [617]. With appendices by M. Katz, P. Pansu and S. Semmes, Translated
	from the French by Sean Michael Bates. MR 1 699 320	
	

	
	\bibitem{GL}
	Mikhael Gromov and H. Blaine Lawson, Jr., \textit{Spin and scalar curvature in the presence
		of a fundamental group}. I, Ann. of Math. (2) 111 (1980), no. 2, 209-230. MR 569070
	
	\bibitem{GL2}
	Mikhael Gromov and H. Blaine Lawson, Jr. \textit{Positive scalar curvature and the Dirac operator on complete Riemannian manifolds}, Inst. Hautes Etudes Sci. Publ. Math. (1983), no. 58, 83-196 (1984).
	MR 720933
	
		\bibitem{Gromovlecture}
	Mikhael Gromov, Four lectures on scalar curvature, 2019. arXiv:1908.10612.
	
	\bibitem{Gdozen}
	Gromov M. A dozen problems, \textit{questions and conjectures about positive scalar curvature.} Foundations of Mathematics and Physics One Century After Hilbert: New Perspectives, 2018: 135-158.
	
	\bibitem{LiPeter}
	Li, P. (1986). \textit{Large time behavior of the heat equation on complete manifolds with non-negative Ricci curvature.} Annals of mathematics, 124(1), 1-21.
	
	\bibitem{LT}
	Peng Lu and Gang Tian. \emph{Uniqueness of standard solutions in the work of Perelman}. preprint,
	http://www. math. lsa. umich. edu/lott/ricciflow/perelman. html, 2005
	
	\bibitem{H1} Hamilton, Richard S. \emph{Three-manifolds with positive Ricci curvature.} Journal of Differential geometry, 1982 17(2): 255-306.
	
	\bibitem{H3}
	Richard Hamilton \textit{A matrix Harnack estimate for the heat equation}. Communications in Analysis and Geometry. 1 (1993), 113-126.
	
	\bibitem{H2} Hamilton, Richard S. \emph{The formation of singularities in the Ricci flow.} Surveys in Differential Geometry, 1995(2):7-136.
	
	\bibitem{Lichao}
	Chao Li. \textit{A polyhedron comparison theorem for 3-manifolds with positive scalar curvature.}
	Invent. Math., 219(1):1–37, 2020.
	
	\bibitem{L2}
	M. Llarull, \textit{Sharp estimates and the Dirac operator}, Math. Ann. 310, 55-71, 1998
	
	\bibitem{Ho} Hochard R.,\emph{ Th\'{e}or\`{e}mes d'existence entemps court du flot de Ricci pour des vari\'{e}t\'{e}s non-compl\`{e}tes, non-\'{e}ffondr\'{e}es, \`{a} courbure minor\'{e}e.} PhD thesis, University of Bordeaux (2019).
	
	\bibitem{Miao}
	P. Miao, \textit{Positive mass theorem on manifolds admitting corners along a hypersurface},
	Adv. Theor. Math. Phys. 6, 1163-1182 (2002)
	
	\bibitem{MinOo}
	M. Min-Oo, \textit{Scalar curvature rigidity of asymptotically hyperbolic spin manifolds}, Math. Ann. 285, 527-539 (1989)
	
	
	\bibitem{Ni2}
	Lei Ni, \textit{The entropy formula for linear heat equation.} J. Geom. Anal. 14 (2004), no. 1, 87-100.
	
	\bibitem{Ni}
	Lei Ni,
	\textit{A note on Perelman's LYH type inequality}, Comm. Anal. Geom. , 14 (2006), 883-905.
	
	
	\bibitem{P1} G.Perelman, \emph{The entropy formula for the Ricci flow and its
		geometric applications.} http://arxiv.org/abs/math/0211159.
	
	\bibitem{QW}
	
	Qing, J., Yuan, W.:\textit{ On scalar curvature rigidity of vacuum static spaces}. Math. Ann. 365(3-4), 1257-
	1277 (2016)	
	
	\bibitem{Sh} Shi, Wan-Xiong. \emph{Deforming the metric on complete Riemannian manifolds}. J. Differential Geom.	1989, 30(1):223-301.
	
	\bibitem{ST}
	Y. Shi and L.F. Tam, \textit{Positive mass theorem and the boundary behaviors of compact		manifolds with non-negative scalar curvature}, J. Diff. Geom. 62
	(2002)
	
	\bibitem{Topping}
	Topping, P. Lectures on the Ricci flow (Vol. 325). Cambridge University Press. (2006)
	
	\bibitem{STY}
	R. Schoen and S.T. Yau, \textit{On the proof of the positive mass conjecture in general
		relativity}, Comm. Math. Phys. 65, 45-76 (1979)
	
	\bibitem{STY4}
	R. Schoen and S.T. Yau, \textit{Proof of the positive mass theorem}. II, Comm. Math. Phys. 79 (1981),
	231-260.
	
	\bibitem{STY1}
	R. Schoen and S.T. Yau, \textit{Existence of incompressible minimal surfaces and
		the topology of three-dimensional manifolds with non-negative scalar curvature}, Ann.
	of Math. (2) 110 (1979), no. 1, 127-142. MR 541332
	
	
	\bibitem{STY2}	
	R. Schoen and S.T. Yau, \textit{On the structure of manifolds with positive scalar curvature},
	Manuscripta Math. 28, 159-183 (1979)
	
		\bibitem{STbook}
	Schoen, R.; Yau, S.-T. Lectures on differential geometry. Lecture notes prepared by Wei Yue Ding, Kung Ching Chang [Gong Qing Zhang], Jia Qing Zhong and Yi Chao Xu. Translated from the Chinese by Ding and S. Y. Cheng. Preface translated from the Chinese by Kaising Tso. Conference Proceedings and Lecture Notes in Geometry and Topology, I. International Press, Cambridge, MA, 1994.
	
	\bibitem{w1}
	Wang, Bing, \emph{The local entropy along Ricci flow Part A: the no-local-collapsing theorems.} Camb. J. Math. 6 (2018), no. 3, 267-346.
	
	\bibitem{Wangx}
	X. Wang,\textit{ The mass of asymptotically hyperbolic manifolds}, J. Diff. Geom. 57, 273-299 (2001)
	
	\bibitem{W}
	E. Witten, \textit{A new proof of the positive energy theorem, Comm. Math. Phys}. 80, 381-
	402 (1981)
	
	

	\end{thebibliography}
\end{document}